\newtheorem{thm}{Theorem}
\newtheorem{cor}[thm]{Corollary}
\newtheorem{lem}[thm]{Lemma}
\newtheorem{prop}[thm]{Proposition}
\theoremstyle{definition}
\newtheorem{defn}[thm]{Definition}
\newtheorem{rem}[thm]{Remark}
\newtheorem{example}[thm]{Example}
\newtheorem{examples}[thm]{Examples}
\renewcommand{\epsilon}{\varepsilon}
\renewcommand{\phi}{\varphi}
\newcommand{\seq}{\subseteq}
\newcommand{\bC}{\mathbb{C}}
\newcommand{\bN}{\mathbb{N}}
\newcommand{\bQ}{\mathbb{Q}}
\newcommand{\bR}{\mathbb{R}}
\newcommand{\bZ}{\mathbb{Z}}
\newcommand{\cB}{\mathcal{B}}
\newcommand{\cC}{\mathcal{C}}
\newcommand{\cD}{\mathcal{D}}
\newcommand{\cF}{\mathcal{F}}
\newcommand{\dist}{\mathrm{dist}}
\newcommand{\ul}[1]{\underline{#1}}
\newcommand{\id}{\mathrm{id}}
\newcommand{\im}{\mathrm{im}}
\renewcommand{\Im}{\mathrm{Im}}
\newcommand{\SL}{\mathrm{SL}}
\newcommand{\PSL}{\mathrm{PSL}}
\newcommand{\GL}{\mathrm{GL}}
\newcommand{\CW}{\mathsf{CW}}
\newcommand{\Spc}{\mathsf{Spc}}
\newcommand{\Grp}{\mathsf{Grp}}
\newcommand{\Ab}{\mathsf{Ab}}
\newcommand{\ppi}{\ul{\pi}}
\newcommand{\pH}{\ul{H}}
\newcommand{\Shape}{\mathrm{Shape}}
\newcommand{\Pro}{\mathrm{Pro}}
\newcommand{\Ind}{\mathrm{Ind}}
\newcommand{\Ho}{\mathrm{Ho}}
\newcommand{\Hom}{\mathrm{Hom}}
\newcommand{\Aut}{\mathrm{Aut}}
\newcommand{\ev}{\mathrm{ev}}
\newcommand{\UConf}{\mathrm{UConf}}
\newcommand{\norm}[1]{\left\Vert #1 \right\Vert}
\newcommand{\sgen}[1]{\left\langle #1 \right\rangle}
\newcommand{\wt}[1]{\widetilde{#1}}
\title{Continuous approximate roots of polynomial equations via shape theory}
\author[J. Lau]{Joshua Lau${}^1$} 
\address[J. Lau]{University of Toronto, Department of Mathematics, 40 St. George St., Toronto, Ontario, Canada, M5S 2E4}%
\email{jlau@math.utoronto.ca}
\urladdr{}
\thanks{${}^1$ Supported by an NSERC Postgraduate scholarship (PGS-D)}
\author[V. Marin-Marquez]{Vicente Marin-Marquez${}^{2}$} 
\address[V. Marin-Marquez]{University of Toronto, Department of Mathematics, 40 St. George St., Toronto, Ontario, Canada, M5S 2E4}%
\email{vicente.marinmarquez@mail.utoronto.ca}
\urladdr{}
\thanks{${}^2$ Supported by an Ontario Graduate Scholarship}
\date{\today}
\subjclass[2020] {54F15, 03C66, 54C56}
\begin{document}

\begin{abstract}
We study continuous approximate solutions to polynomial equations over the ring $C(X)$ of continuous complex-valued functions over a compact Hausdorff space $X$. We show that when $X$ is one-dimensional, the existence of such approximate solutions is governed by the behaviour of maps from the fundamental pro-group of $X$ into braid groups. 
\end{abstract}

\maketitle
\section{Introduction}

Let $X$ be a compact Hausdorff space (henceforth a \emph{compactum}), and denote by $C(X)$ the C*-algebra of continuous complex-valued functions on $X$. For each positive integer $n$ and each $n$-tuple of continuous functions $a_0,a_1, \ldots, a_{n-1} \in C(X)$, we obtain a monic polynomial over the ring $C(X)$ of degree $n$ given by 
\begin{align*}
    P(x, z) = z^n + a_{n-1}(x)z^{n-1} + \cdots + a_1(x)z + a_0(x)
\end{align*}
One obvious question one can ask is whether $P(x,z)$ has any roots in the ring $C(X)$, that is, if there exist functions $f \in C(X)$ such that $P(x,f(x)) = 0$ for all $x \in X$. 

The problem of characterizing those topological spaces $X$ for which \emph{every} monic polynomial $P(x,z)$ over $C(X)$ has a global continuous solution has been extensively studied \cite{Countryman,KawamuraMiura,HatoriMiura2000,HonmaMiura2007}. Work of Gorin and Lin \cite{GorinLin} has also established necessary and sufficient conditions for $P(x,z)$ to split into linear factors over $C(X)$ in the case that $P(x_0,z)$ has no repeated roots for each $x_0 \in X$. 

While there has been considerable effort in understanding polynomials over $C(X)$, most of the results in the literature have been \emph{algebraic} in nature, concerning only properties $C(X)$ has as a ring. However, in \cite{Miura1999}, Miura takes an \emph{analytic} approach, making use of the sup-norm on $C(X)$. Together with Kawamura in \cite{KawamuraMiura}, this analytic approach is further investigated and for continua of dimension at most one, they obtain a topological characterization for when $C(X)$ possesses approximate $n^{\text{th}}$ roots. 

Recently, these analytic results have found a fruitful application in the continuous model theory of Banach algebras (following \cite{BenYaacovEtAl}). Indeed, in \cite{EagleLau} it was shown that the approximate algebraic closure of $C(X)$ is an $\forall\exists$-axiomatizable property in the language of unital C*-algebras and an explicit axiomatization is given. On the other hand, it is also shown that the property of (exact) algebraic closure of $C(X)$ is not axiomatizable in this language. 

In this paper, we provide topological characterizations for a continuum, that is, a connected compactum, to have an approximately algebraically closed $C(X)$, in the case that $X$ has covering dimension one. The most general result concerns a shape-theoretic invariant, the \emph{fundamental pro-group} $\ppi_1(X, x_0)$ of a pointed space $(X, x_0)$ (see Section \ref{sec:proCatShape} for a review of shape theory), and maps from $\ppi_1(X, x_0)$ into the braid group on $n$ strands $\mathcal{B}_n$.

\begin{thm}[Theorem \ref{thm:low_dim_approx_alg_closure_propi1}]\label{thm:MainThm}
    Suppose that $X$ is a continuum with covering dimension at most one. Consider any basepoint $x_0 \in X$. Then $C(X)$ is approximately algebraically closed if and only if for every $n \geq 1$, all morphisms $\ppi_1(X, x_0) \to \mathcal{B}_n$ map into the kernel of the canonical map $\mathcal{B}_n \to S_n$, i.e. all morphisms $\ppi_1(X, x_0) \to \mathcal{B}_n$ factor through the inclusion $N_n \to \mathcal{B}_n$ of the subgroup of pure braids.
\end{thm}

Using this criterion, we are able to provide examples of continua that are not (exactly) algebraically closed, but are approximately algebraically closed (Examples \ref{ex:variousExamplesofApproxAlgClosure_cechH1}). An important application of our results is to the study of co-existentially closed continua. This class of continua was first introduced by Bankston in \cite{Bankston1999}, and are known to be one-dimensional \cite[Corollary 4.13]{Bankston2006}, so our results apply (Example \ref{ex:co-ec_continua}). We believe that this is an important first step towards a classification of co-existentially closed continua. From the perspective of continuous model theory, the next step towards this goal would include studying more general classes of equations over one-dimensional spaces, namely, non-monic equations, multivariable equations, and *-polynomials (i.e.~polynomials in $z$ and $\overline{z}$). \par

The remainder of this paper is organized as follows. In Section \ref{sec:Preliminaries}, we set up and discuss some elementary facts concerning polynomials over $C(X)$. We also review some relevant facts about braid groups. Section \ref{sec:pertStab} is analytic in nature, as we relate approximate roots of an arbitrary polynomial to exact roots of better-behaved polynomials (i.e. those with non-vanishing discriminant). In Section \ref{sec:proCatShape}, we review some material on shape theory we will need, especially results about the fundamental pro-group, which is the primary topological invariant we use in this paper. In Section \ref{sec:polys_via_ppi1}, we formulate how exact roots of well-behaved polynomials are related to morphisms from the fundamental pro-group into braid groups. In Section \ref{sec:main_results}, we combine the results in the previous sections to obtain Theorem \ref{thm:MainThm} above, and provide examples of how to use it. Finally, in Section \ref{sec:lowdegpolys} we explain some simpler characterizations for obtaining approximate roots of low-degree polynomials, and give some examples to show that these simpler invariants cannot be used for higher degree polynomials. 

\subsection*{Acknowledgements}
We are grateful to Christopher Eagle and George Elliott for helpful discussions and comments. We would also like to thank the anonymous referee for helpful comments, and especially for pointing out a proof of Theorem \ref{thm:noRepeatRootsDense} that much simplified our original argument.

\numberwithin{thm}{section}
\section{Spaces of polynomials and their solutions}\label{sec:Preliminaries}
\subsection{Spaces of polynomials}\label{subsec:prelim_spaces_of_polys}
This paper is concerned with monic polynomials over the ring of continuous functions on a compactum. Given a compactum $X$, we write a monic polynomial $P \in C(X)[z]$ of degree $n \geq 1$ as
\begin{align*}
    P(x,z) = z^{n} + a_{n-1}(x)z^{n-1} + \cdots + a_1(x)z + a_0(x) 
\end{align*}
where $a_0, a_1, \ldots, a_{n-1} \in C(X)$. We typically denote polynomials over $C(X)$ using capital letters such as $P$ and $Q$, whereas we typically denote polynomials over $\mathbb{C}$ with lowercase letters such as $p$ and $q$. Observe that if $C(X,\mathbb{C}^n)$ denotes the Banach space of continuous functions from $X$ to $\mathbb{C}^n$, then we have a natural identification of vector spaces
\begin{align*}
    \{ P \in C(X)[z] : P \text{ is monic with degree $n$}\} &\longleftrightarrow C(X,\mathbb{C}^n) \\
    z^{n} + a_{n-1}(x)z^{n-1} + \cdots + a_1(x)z + a_0(x) &\longleftrightarrow (a_0, a_1, \ldots, a_{n-1})
\end{align*}
This identification, endows the space of monic polynomials of degree $n$ over $C(X)$ with the structure of a Banach space, and in particular this space is topologised with the norm 
\begin{align*}
    \norm{P} = \max_{0 \leq i < n} \norm{a_i}_{\infty}
\end{align*}
for $P(x,z) = z^{n} + a_{n-1}(x)z^{n-1} + \cdots + a_1(x)z + a_0(x)$. Due to this identification, we make no distinction between a monic polynomial of degree $n$ in $C(X)[z]$ and a continuous map $X \to \mathbb{C}^n$. \par

Observe that for each $x_0 \in X$ we have a natural evaluation map $\ev_{x_0} : C(X,\mathbb{C}^n) \to \mathbb{C}[z]$ given by 
\begin{align*}
    \ev_{x_0}(a_0, a_1, \ldots, a_{n-1}) = z^n +a_{n-1}(x_0)z^{n-1} + \cdots + a_{1}(x_0)z + a_0(x_0).
\end{align*}
For a monic polynomial $P(x,z) \in C(X)[z]$, we denote the monic polynomial $\ev_{x_0}(P) \in \mathbb{C}[z]$ by $P(x_0,z)$. Applying the usual evaluation map $\ev_{z_0} : \mathbb{C}[z] \to \mathbb{C}$ at a point $z_0 \in \mathbb{C}$, we obtain, for each $x_0 \in X$ and each $z_0 \in \mathbb{C}$, a complex number $P(x_0,z_0)$ given by $P(x_0,z_0) = \ev_{z_0} \ev_{x_0}(P)$. \par

Above, the discussion focuses solely on a monic polynomial in $C(X)[z]$ in terms of its coefficients, but there is an alternative and useful perspective of thinking in terms of the roots. This is done as follows: To each monic polynomial $P \in C(X)[z]$ of degree $n$, we can associate a map $X \to \mathbb{C}^n/S_n$ defined by associating to each $x_0 \in X$, the multiset of roots of of the complex polynomial $P(x_0,z) \in \mathbb{C}[z]$ (this is well-defined because the polynomial is monic and hence does not drop degree as $x_0$ varies). By a consequence of Rouch\'e's Theorem (see Lemma \ref{lem:continuousDependenceOfRoots}), this map is continuous. 

An especially important class of polynomials are those that never have repeated roots, in which case this map descends to a continuous map $X \to \UConf_n(\mathbb{C})$, the unordered configuration space on $n$ points in $\mathbb{C}$. This space of polynomials with no repeated roots also has a description in terms of the coefficients as follows. Let $\Delta : \mathbb{C}^n \to \mathbb{C}$ denote the \emph{monic discriminant function}, i.e. $\Delta(a_0, a_1, \ldots, a_{n-1})$ is the discriminant of the degree $n$ monic polynomial $z^n + a_{n-1}z^{n-1} + \cdots + a_1 z + a_0$. Then $\Delta$ is some polynomial in $a_0, a_1, \ldots, a_{n-1}$, and its zero-set defines the \emph{(monic) discriminant variety} 
$$
V(\Delta) = \{ (a_0, a_1, \ldots, a_{n-1}) \in \mathbb{C}^n : \Delta(a_0, a_1, \ldots, a_{n-1}) = 0 \}
$$
which is a closed subset of $\mathbb{C}^n$. Then the open subset $B_n := \mathbb{C}^n \setminus V(\Delta)$ of $\mathbb{C}^n$ is the space of coefficients for monic polynomials of degree $n$ having no repeated roots, so the space of degree $n$ monic polynomials over $C(X)$ having no repeated roots is identified with the subset $C(X,B_n)$ in $C(X, \mathbb{C}^n)$. \par


The space of monic polynomials over $\mathbb{C}$ exhibits a natural $\mathbb{C}^{\times}$-action given by scaling the roots. Precisely, if we denote this action by $\star$, then a number $\mu \in \mathbb{C}^\times$ acts on a polynomial $\prod_{1\leq i \leq n}(z - \lambda_j) \in \mathbb{C}[z]$ by
\begin{align*}
    \mu \star \prod_{1\leq i \leq n}(z - \lambda_j) = \prod_{1 \leq i \leq n}(z-\mu \lambda_j)
\end{align*}
In terms of the coefficients, this action is given by
\begin{align*}
    \mu \star (z^n + a_{n-1}z^{n-1} + \cdots + a_1 z + a_0) = z^n + \mu a_{n-1} z^{n-1} + \cdots + \mu^{n-1}a_1 z + \mu^n a_0
\end{align*}
The perspective of this action in terms of the roots shows that the action clearly restricts to the space $B_n$. An important observation is that with respect to this action, the discriminant function $\Delta : B_n \to \mathbb{C}^\times$ is homogeneous of degree $n(n-1)$: 
\[\Delta(\mu \star p) = \prod_{1 \leq i < j \leq n} \mu^2(\lambda_i - \lambda_j)^2 = \mu^{n(n-1)} \Delta(p)\]
This implies that $\Delta : B_n \to \mathbb{C}^\times$ is a fibre bundle for $n \geq 2$. Indeed, if $n\geq 2$, then we can find neighbourhoods $U$ and $V$ of $1 \in \bC^\times$ such that the map $\mu \mapsto \mu^{n(n - 1)}$ defines a homeomorphism from $U$ to $V$. Then given a polynomial $p_0 \in B_n$ and its discriminant $\delta_0 = \Delta(p_0)$ we have the trivialization map
\begin{align*}
    U \times \Delta^{-1}(\delta_0) & \longrightarrow \Delta^{-1}(V\delta_0) \\
    (\mu, p) & \longmapsto \mu \star p
\end{align*}
which is a homoemorphism as the inverse is given by taking some $q \in \Delta^{-1}(V\delta_0)$ to $(\mu, \mu^{-1} \star q)$ for $\mu = (\Delta(q)/\delta_0)^{1/n(n-1)}$, where the $n(n - 1)$th root is taken using the inverse $V \to U$ we specified above. This shows that $\Delta: B_n \to \bC^\times$ is a fibre bundle.
We define the subset $B_n' = \Delta^{-1}(1) \seq B_n$ to be the fibre over $1 \in \mathbb{C}^\times$, i.e. the space of monic, degree $n$ polynomials with no repeated roots having discriminant equal to $1$. This subset $B_n'$ is path-connected (\cite[Lemma 3.5]{GorinLin}).

\subsection{Continuous roots of polynomials}\label{subsec:spaces_of_polys_and_roots}
Let $X$ be a compactum. As mentioned in the introduction, the question of when a monic polynomial $P \in C(X)[z]$ has a root in the ring $C(X)$ (i.e. when there is a continuous function $f \in C(X)$ such that $P(x,f(x)) = 0$ for all $x\in X$) has been extensively studied. If $f \in C(X)$ is a root of $P$, we sometimes say $f$ is an \emph{exact root} to distinguish it from the approximate roots that are treated below. If $P$ factors completely over $C(X)$, i.e. when there exist $f_1, \ldots, f_{\deg P} \in C(X)$ such that $P(x,z) = \prod_{k=1}^{\deg P} (z-f_k(x))$, we say that $P$ is \emph{completely solvable}. Complete solvability of polynomials over $C(X)$ was studied by Gorin and Lin in \cite{GorinLin}, and we adapt many of their methods here. \par 

\begin{defn}\label{defn:alg_closed_compactum}
    For $X$ a compactum, we say that the ring $C(X)$ is \emph{algebraically closed} (resp. \emph{completely solvable}) if every non-constant monic polynomial over $C(X)$ has an exact root (resp. is completely solvable). 
\end{defn}

In this paper, we are also interested in \emph{approximate} roots of monic polynomials over $C(X)$. Let us make this notion precise. 

\begin{defn}\label{defn:approx_roots}
    Suppose that $P(x,z)$ is a monic polynomial over $C(X)$, and let $\epsilon > 0$ be given. We say that a function $f \in C(X)$ is an \emph{$\epsilon$-approximate root} of $P$ provided that $|P(x,f(x))| < \epsilon$ for all $x \in X$. We say that $P$ \emph{has approximate roots} if $P$ has an $\epsilon$-approximate root for all $\epsilon > 0$.
\end{defn}

\begin{defn}\label{defn:approx_alg_closed_compatum}
    Let $X$ be a compactum. We say that $C(X)$ is \emph{approximately algebraically closed} if every non-constant monic polynomial over $C(X)$ has approximate roots. 
\end{defn}

In the case when a polynomial $P$ of degree $n \geq 1$ has no repeated roots, there is a topological description for when $P$ admits a continuous root that lends itself well to homotopical methods. Following \cite{GorinLin}, we define the solution space for a monic polynomial of degree $n$ with no repeated roots to be the space
\[E_n = \{((a_0, a_1, \ldots, a_{n-1}), z_0) \in B_n \times \bC : \ev_{z_0}(a_0, a_1, \ldots, a_{n-1}) = 0\} \subset \mathbb{C}^n \times \mathbb{C}\]
which consists of elements in $B_n$ along with a choice of root (such a choice always exists by the Fundamental Theorem of Algebra). Then we have the natural projection map $\rho: E_n \to B_n$ that forgets the root, and we see that given a polynomial $P: X \to B_n$ a solution of $P$ is nothing but a continuous lift $\lambda: X \to E_n$ making the diagram 
\[\begin{tikzcd}
	& {E_n} \\
	X & {B_n}
	\arrow["\rho", from=1-2, to=2-2]
	\arrow["\lambda", dashed, from=2-1, to=1-2]
	\arrow["P"', from=2-1, to=2-2]
\end{tikzcd}\]
commute. The projection map $\rho: E_n \to B_n$ is an $n$-sheeted covering map, and so understanding the fundamental group of these spaces will be crucial for determining when we can find lifts (and hence roots of polynomials). As explained in \cite{GorinLin}, the fundamental group of $B_n$ is the Artin braid group on $n$-strands, which we denote by $\mathcal{B}_n$. Moreover, $B_n$ is an Eilenberg-MacLane space $K(\mathcal{B}_n, 1)$ \cite[Corollary 2.2]{FadellNeuwirth}. \par
The fundamental group $M_n$ of $E_n$ is then an index $n$ subgroup of $\pi_1(B_n) \cong \mathcal{B}_n$ and can be identified with the subgroup of braids that fix the first strand. Note that the way this group $M_n$ sits inside $E_n$ depends on a choice of basepoint. Choosing a basepoint $b_0 \in B_n$, we have $n$ choices for a basepoint of $E_n$ that maps to $b_0$, corresponding to the $n$ distinct choices of roots for the monic polynomial $b_0$; let's denote them $z_1, \dots, z_n$. Then the start and end points of the strands for a braid in $\pi_1(B_n, b_0) \cong \mathcal{B}_n$ are labeled by $z_1, \dots, z_n$ and the subgroup $M_{n, i} = \pi_1(E_n, (b_0, z_i))$ of $\mathcal{B}_n$ consists of the braids whose strand starting at $z_i$ also ends at $z_i$. \par
Similarly we can define the space $\widetilde{E}_n \subset \mathbb{C}^n \times \mathbb{C}^n$ whose elements consist of a polynomial in $B_n$ along with an ordering of all $n$ of its roots, that is, 
\[\widetilde{E}_n = \{((a_0, a_1, \ldots, a_{n-1}), z_1, \dots, z_n) \in B_n \times \bC^n : \ev_{z_i}(a_0, a_1, \ldots, a_{n-1}) = 0 \quad \forall i \in \{1, \dots, n\}  \}. \]
We also have a natural projection map $\widetilde{\rho}: \widetilde{E}_n \to B_n$ that forgets all $n$ roots, which is an $n!$-sheeted covering map as there are $n!$ ways to order the $n$ roots of a polynomial. The fundamental group $N_n$ of $\widetilde{E}_n$ is the pure braid group on $n$-strands, which corresponds to the subgroup of $B_n$ consisting of braids with each of its $n$ strands starting and ending at the same point (this is the kernel of the natural homomorphism $\tau: \cB_n \to S_n$ into the symmetric group). Analogously to lifting along $\rho$, a polynomial $P: X \to B_n$ is completely solvable if and only if there is a lift of $P$ along $\widetilde{\rho}$. Since $E_n$ and $\widetilde{E}_n$ are covering spaces of $B_n$, they are also Eilenberg--MacLane spaces. \par
A final important subgroup of $\cB_n$ is the commutator subgroup $\cB_n'$. As explained in \cite{GorinLin}, the quotient $\cB_n/\cB_n'$ is isomorphic to $\bZ$ and that the discriminant function $\Delta: B_n \to \bC^\times$ induces the abelianization map $\pi_1(\Delta): \cB_n \to \bZ$. Then using the fibre sequence
\[B_n' \longrightarrow B_n \longrightarrow \bC^\times\]
that we found at the end of Section \ref{subsec:prelim_spaces_of_polys}, we get the exact sequence
\[\begin{tikzcd}
	{0 \cong \pi_2(\bC^\times)} & {\pi_1(B_n')} & {\pi_1(B_n)} & {\pi_1(\bC^\times) \cong \bZ} & {\pi_0(B_n') \cong 0}
	\arrow[from=1-1, to=1-2]
	\arrow[from=1-2, to=1-3]
	\arrow["{\pi_1(\Delta)}", from=1-3, to=1-4]
	\arrow[from=1-4, to=1-5]
\end{tikzcd}\]
which gives us that $\pi_1(B_n') \cong \cB_n'$ (justifying the notation) and that the inclusion map on spaces $B_n' \to B_n$ induces the inclusion map on subgroups $\cB_n' \cong \pi_1(B_n') \to \pi_1(B_n) \cong \cB_n$.

\section{Stability and perturbation}\label{sec:pertStab}
The goal of this section is to reduce the study of approximate roots for all polynomials over $C(X)$ to the study of exact roots of polynomials over $C(X)$ that have no repeated roots. This will allow us to later use the methods in the paper \cite{GorinLin}, which treats the no repeated roots case. Thus, the main theorem of this section is the following:

\begin{thm}\label{thm:perturbAndStabDegN}
    Let $X$ be a compactum with $\dim X \leq 1$ and let $n$ be a positive integer. The following are equivalent:
    \begin{enumerate}
        \item Every monic polynomial of degree $n$ with coefficients in $C(X)$ has approximate roots;
        \item Every monic polynomial of degree $n$ with coefficients in $C(X)$ with no repeated roots has approximate roots;
        \item Every monic polynomial of degree $n$ with coefficients in $C(X)$ with no repeated roots has an exact root. 
    \end{enumerate}
\end{thm}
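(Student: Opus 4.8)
The three conditions sit in a convenient cycle $(1) \Rightarrow (2) \Rightarrow (3) \Rightarrow (1)$, of which the first implication is immediate since polynomials with no repeated roots form a subclass of all monic polynomials of degree $n$. The plan is therefore to establish the stability implication $(2) \Rightarrow (3)$ by a ``snapping'' argument, and the perturbation implication $(3) \Rightarrow (1)$ by a general-position argument in which the hypothesis $\dim X \le 1$ is used essentially.

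For $(2) \Rightarrow (3)$, fix $P \in C(X, B_n)$. Since $X$ is compact and $\Delta \circ P$ is continuous and nonvanishing, there is a uniform bound $R_0 := 1 + \norm{P}$ on all roots of $P(x,\cdot)$ and a uniform lower bound $\delta_0 := \min_x \abs{\Delta(P(x))} > 0$; writing $\abs{\Delta(P(x))} = \prod_{i<j}\abs{\lambda_i(x) - \lambda_j(x)}^2$ and bounding the remaining factors by $(2R_0)^2$ yields a uniform lower bound $\rho > 0$ on the pairwise separation of the roots of $P(x,\cdot)$. Because $\abs{P(x,z)} = \prod_i \abs{z - \lambda_i(x)}$, any $z$ with $\abs{P(x,z)} < \epsilon$ lies within $\epsilon^{1/n}$ of some root (otherwise every factor is at least $\epsilon^{1/n}$ and the product is at least $\epsilon$). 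Applying $(2)$ with $\epsilon$ chosen so that $\epsilon^{1/n} < \rho/2$ produces $f \in C(X)$ with $f(x)$ within $\rho/2$ of a \emph{unique} root $g(x)$ of $P(x,\cdot)$ for every $x$. It remains to check that $g$ is continuous: since $E_n \to B_n$ is a covering map, near any $x_0$ the roots are given by continuous local sections $\sigma_1, \dots, \sigma_n$ of this covering composed with $P$ (cf.\ Lemma \ref{lem:continuousDependenceOfRoots}), and the index of the root nearest $f(x)$ is locally constant by continuity of $f$ and the $\sigma_j$ together with the separation $\rho$. Thus $g \in C(X)$ satisfies $P(x, g(x)) = 0$ identically, giving an exact root.

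For $(3) \Rightarrow (1)$, let $P$ be an arbitrary monic polynomial of degree $n$ and let $\epsilon > 0$. The key lemma is that $P$, viewed as a map $X \to \bC^n$, can be approximated uniformly by a map $\tilde P \in C(X, B_n)$, that is, one avoiding the discriminant variety $V(\Delta)$. Granting this, $(3)$ supplies an exact root $f$ of $\tilde P$; once $\norm{\tilde P - P} < 1$ the roots of $\tilde P(x,\cdot)$ are bounded by $R := 2 + \norm{P}$, so $f$ is uniformly bounded, and the estimate $\abs{P(x,f(x))} = \abs{P(x,f(x)) - \tilde P(x,f(x))} \le n\norm{\tilde P - P}\max(1, R^{n-1})$ shows that a sufficiently small perturbation makes $f$ an $\epsilon$-approximate root of $P$.

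The main obstacle is the perturbation lemma, and this is the only place the dimension hypothesis enters. Here $V(\Delta) \subset \bC^n = \bR^{2n}$ is a proper complex algebraic hypersurface, hence a triangulable set of real dimension $2n - 2$. The plan is to write the compactum $X$ as an inverse limit of one-dimensional polyhedra (graphs), factor $P$ up to $\epsilon/2$ through the projection to one such graph $X_i$ via a map $Q \colon X_i \to \bC^n$, and then, after a simplicial approximation, apply piecewise-linear general position to perturb $Q$ by less than $\epsilon/2$ to a map whose image misses $V(\Delta)$; this is possible precisely because $\dim X_i + \dim V(\Delta) \le 1 + (2n-2) = 2n - 1 < 2n$, so a generic $1$-dimensional image is disjoint from the $(2n-2)$-dimensional set $V(\Delta)$. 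Composing with the projection $X \to X_i$ gives the desired $\tilde P \in C(X, B_n)$. I expect verifying this general-position step to be the crux of the argument, and in particular that the dimension count is exactly what fails when $\dim X = 2$; the stability and transfer estimates above are then routine.
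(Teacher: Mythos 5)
Your proof is correct and follows the paper's route for $(1)\Rightarrow(2)\Rightarrow(3)$ and for the transfer estimate in $(3)\Rightarrow(1)$: the paper's Theorem \ref{thm:noRepeatRootsStability} is exactly your ``snapping'' argument (it derives the uniform root separation from continuity and compactness rather than from the lower bound on $\abs{\Delta\circ P}$, and checks continuity of the nearest-root function by hand with the multiset topology rather than via local sections of $E_n\to B_n$, but these are cosmetic differences), and your bound $\abs{P(x,f(x))}\le n\norm{\tilde P-P}\max(1,R^{n-1})$ is the paper's Lemma \ref{lem:closePolyMeansCloseRoots}. The genuine divergence is in the perturbation lemma, i.e.\ the density of $C(X,B_n)$ in $C(X,\bC^n)$. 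The paper (Lemma \ref{lem:localDenseInDim1} through Corollary \ref{cor:discIsDense}) never resolves $X$ into polyhedra: it uses Rieffel's theorem that invertibles are dense in $C(X)$ when $\dim X\le 1$ to handle the model hypersurface $z_1\cdots z_n=0$, transports this by diffeomorphisms, covers each stratum of a Whitney stratification of $V(\Delta)$ by countably many such images, and concludes by Baire category; this works verbatim for arbitrary compact Hausdorff $X$ of dimension $\le 1$. Your route---write $X=\varprojlim X_i$ with $X_i$ graphs, factor $P$ approximately through some $X_i$, take a PL approximation, and apply general position since $1+(2n-2)<2n$---is also viable and has the virtue of making the codimension count transparent (it essentially proves the generalization the paper only conjectures, that real codimension $\ge 1+\dim X$ suffices), but it carries two obligations the paper's argument avoids: for non-metrizable $X$ you need Marde\v{s}i\'{c}'s theorem that a compactum of dimension $\le 1$ is an inverse limit of $1$-dimensional polyhedra (Freudenthal covers only the metrizable case cited in the paper), and you need $V(\Delta)$ triangulated (or replaced by a measure/Hausdorff-dimension argument on translates) before PL general position applies. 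Also note that the PL approximation step is not optional: the image of a merely continuous map from a graph into $\bR^{2n}$ can have Hausdorff dimension exceeding $1$, so general position must be applied after, not before, straightening $Q$.
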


The basic strategy is as follows. First, to prove that (2) implies (3) in the above, we need to understand under what circumstances having roots is a \emph{stable} property, i.e. when a given polynomial over $C(X)$ having approximate roots actually implies that it has an exact root. Then, to show that (3) implies (1), we are led to investigate when we can \emph{perturb} the coefficients of a polynomial so that it has no repeated roots, and this is where the restriction on the dimension of $X$ appears. The key ingredient is the simple observation that the roots of a polynomial are bounded by the coefficients. In our case, this means the following. 

\begin{lem}\label{lem:polyRootBound}
    Let $X$ be a compactum, let $P(x,z) = z^n + a_{n-1}(x)z^{n-1} + \cdots + a_1(x)z + a_0(x)$ be a polynomial with coefficients in $C(X)$, and let $M := \max\{  \norm{a_0}, \norm{a_1}, \ldots, \norm{a_{n-1}} \}$. If $f \in C(X)$ is an $\epsilon$-approximate solution of $P$, then $\norm{f}_{\infty} \leq 1 + \epsilon + M$. 
\end{lem}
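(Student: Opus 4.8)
The plan is to reduce to a pointwise estimate and then run the classical Cauchy bound for the roots of a complex polynomial, carrying along the error term $\epsilon$. Fix $x \in X$ and set $z = f(x)$. First I would rearrange the defining relation to isolate the top-degree term, writing $z^n = P(x,z) - \sum_{i=0}^{n-1} a_i(x) z^i$, and then apply the triangle inequality together with the bounds $\abs{a_i(x)} \leq M$ and the hypothesis $\abs{P(x,z)} < \epsilon$ to obtain the pointwise inequality $\abs{z}^n < \epsilon + M \sum_{i=0}^{n-1} \abs{z}^i$.

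The main work is a case analysis on the size of $\abs{z}$. If $\abs{z} \leq 1$, the desired bound is immediate since $1 \leq 1 + \epsilon + M$. Otherwise $\abs{z} > 1$, and I would sum the geometric series, using $\sum_{i=0}^{n-1}\abs{z}^i = (\abs{z}^n - 1)/(\abs{z}-1) < \abs{z}^n/(\abs{z}-1)$, to get $\abs{z}^n < \epsilon + M\abs{z}^n/(\abs{z}-1)$. To extract the stated constant I would argue by contradiction: supposing $\abs{z} \geq 1 + \epsilon + M$ forces $\abs{z} - 1 \geq \epsilon + M$, hence $M/(\abs{z}-1) \leq M/(\epsilon + M)$; substituting this in and clearing the denominator collapses the inequality to $\abs{z}^n < \epsilon + M$, which contradicts $\abs{z}^n \geq \abs{z} \geq 1 + \epsilon + M > \epsilon + M$. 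Therefore $\abs{f(x)} < 1 + \epsilon + M$ at every $x$, and taking the supremum over $x \in X$ gives $\norm{f}_{\infty} \leq 1 + \epsilon + M$.

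The computation is elementary, so the only delicate point is making the constant come out cleanly. A naive application of the geometric bound loses the factor needed to close the argument, so I expect the main (if modest) obstacle to be the bookkeeping of the $\epsilon$ term, arranging that the two contributions $\epsilon/(\epsilon+M)$ and $M/(\epsilon+M)$ balance to exactly $1$, and noting that it is the \emph{strict} inequality $\abs{P(x,f(x))} < \epsilon$ from Definition \ref{defn:approx_roots} that ultimately produces the contradiction. One should also check that the argument degenerates correctly in the edge case $M = 0$, where $P(x,z) = z^n$ and the estimate still holds.
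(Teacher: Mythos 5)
Your proof is correct and takes essentially the same route as the paper: the paper sets $w_0 = P(x_0,f(x_0))$, absorbs it into the constant coefficient so that $f(x_0)$ is an \emph{exact} root of the perturbed polynomial $z^n + a_{n-1}(x_0)z^{n-1} + \cdots + (a_0(x_0)-w_0)$, and then cites Cauchy's bound, whereas you unfold the standard geometric-series proof of Cauchy's bound inline with the $\epsilon$ carried along. The case analysis, the constant, and the conclusion are identical.
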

\begin{proof}
    Let $f \in C(X)$ be an $\epsilon$-approximate solution of $P$. Take any $x_0 \in X$, and let $w_0 := P(x_0,f(x_0))$. Since $f$ is an $\epsilon$-approximate solution of $P$, we know that $|w_0| < \epsilon$. Observe that 
    \begin{align*}
        z^n + a_{n-1}(x_0)z^{n-1} + \cdots + a_1(x_0)z + (a_0(x_0) - w_0)
    \end{align*}
    is a polynomial over $\mathbb{C}$ possessing $f(x_0)$ as a root. Hence by Cauchy's bound for the root of a polynomial, we have that 
    \begin{align*}
        |f(x_0)| &\leq 1 + \max\{ |a_0(x_0) - w_0|, |a_1(x_0)| , \ldots, |a_{n-1}(x_0)| \} \\
        &\leq 1 + \max\{ |a_0(x_0)| + |w_0|, |a_1(x_0)| , \ldots, |a_{n-1}(x_0)| \} \\
        &\leq 1 + |w_0| + \max\{ |a_0(x_0)|, |a_1(x_0)| , \ldots, |a_{n-1}(x_0)| \} \\
        &\leq 1 + \epsilon + \max\{ |a_0(x_0)|, |a_1(x_0)| , \ldots, |a_{n-1}(x_0)| \} \\
        &\leq 1 + \epsilon + M
    \end{align*}
    As the above bound holds for any $x_0 \in X$, we obtain $\norm{f}_{\infty} \leq 1 + \epsilon + M$. 
\end{proof}

As mentioned in Section \ref{subsec:prelim_spaces_of_polys}, we can think of any monic polynomial $P(x,z)$ over $C(X)$ in terms of its roots, by considering the map $X \to \mathbb{C}^n/S_n$ sending some $x_0 \in X$ to the multiset of roots of $P(x_0,z) \in \mathbb{C}[z]$. We take some time to formulate what this means precisely, and show that this map is continuous. 

\begin{lem}\label{lem:continuousDependenceOfRoots}
    The roots of a polynomial depend continuously on its coefficients. Precisely, given any monic polynomial $p(z) = z^n + a_{n - 1}z^{n - 1} + \cdots + a_1z + a_0$ that factors as
    \[p(z) = \prod_{i = 1}^r (z - \lambda_i)^{m_i}\]
    for some distinct roots $\lambda_i \in \bC$ with multiplicities $m_i$, and any $\delta \in (0, \frac{1}{2}\min_{i \neq j} |\lambda_i - \lambda_j|)$, there exists an $\epsilon > 0$ so that any monic polynomial $q(z) = z^n + b_{n - 1}z^{n - 1} + \cdots + b_1z + b_0$ with coefficients satisfying $\max_{0 \leq k < n} |a_k - b_k| < \epsilon$ has exactly $m_i$ roots counted with multiplicity in the disk $B_{\delta}(\lambda_i) = \{z \in \mathbb{C}: |z - \lambda_i| < \delta\}$.
\end{lem}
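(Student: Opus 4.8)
The plan is to prove this via Rouch\'e's theorem applied on the boundary of each disk. The key quantitative fact I need is that on a compact set, the difference $|p(z) - q(z)|$ can be made uniformly small by controlling the coefficients, since $|p(z)-q(z)| = |\sum_{k=0}^{n-1}(a_k - b_k)z^k| \leq \sum_{k=0}^{n-1}|a_k - b_k|\,|z|^k$.

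First I would fix the factorization $p(z) = \prod_{i=1}^r (z-\lambda_i)^{m_i}$ and a value $\delta \in (0, \tfrac{1}{2}\min_{i\neq j}|\lambda_i - \lambda_j|)$. The choice of $\delta$ guarantees that the closed disks $\overline{B_\delta(\lambda_i)}$ are pairwise disjoint, so each circle $\partial B_\delta(\lambda_i)$ avoids all the roots of $p$. Consequently $m := \min_i \min_{|z-\lambda_i| = \delta}|p(z)| > 0$, since $p$ is nonvanishing on the compact union of circles $\bigcup_i \partial B_\delta(\lambda_i)$. Next I would bound the perturbation: setting $R := 1 + \max_i |\lambda_i| + \delta$, every point on these circles satisfies $|z| \leq R$, so for any $q$ with $\max_{0\leq k<n}|a_k - b_k| < \epsilon$ we get
\[
|p(z) - q(z)| \;\leq\; \sum_{k=0}^{n-1}|a_k - b_k|\,|z|^k \;<\; \epsilon \sum_{k=0}^{n-1} R^k \;\leq\; \epsilon \, n R^{n-1}
\]
(using $R \geq 1$) on each circle. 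Choosing $\epsilon := m / (n R^{n-1})$ then forces $|p(z) - q(z)| < m \leq |p(z)|$ for all $z$ on every circle $\partial B_\delta(\lambda_i)$.

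With this strict inequality in hand, Rouch\'e's theorem applies on each disk $B_\delta(\lambda_i)$: since $|p(z)-q(z)| < |p(z)|$ on $\partial B_\delta(\lambda_i)$, the functions $p$ and $q$ have the same number of zeros (counted with multiplicity) inside $B_\delta(\lambda_i)$. As $p$ has exactly $m_i$ zeros there (namely $\lambda_i$ with multiplicity $m_i$, and no others by disjointness), $q$ also has exactly $m_i$ zeros in $B_\delta(\lambda_i)$, which is the claim.

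I do not anticipate a serious obstacle, as this is a standard application of Rouch\'e; the only care needed is in making the estimates genuinely uniform and verifying that the same $\epsilon$ works simultaneously for all $r$ disks. This is handled by taking $m$ as the minimum over all circles and $R$ as a uniform radius bound, so that a single $\epsilon$ suffices. One minor point worth stating explicitly is that the lemma only asserts control of the \emph{number} of roots in each disk, not their multiplicities individually, so no finer analysis of how the root $\lambda_i$ splits is required.
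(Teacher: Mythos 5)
Your proposal is correct and follows essentially the same route as the paper's proof: both take the circles $\partial B_\delta(\lambda_i)$, use compactness to get a positive minimum of $|p|$ there, make $|p-q|$ uniformly smaller than that minimum by controlling the coefficients, and conclude with Rouch\'e's theorem. The only difference is that you make the coefficient estimate explicit (via the bound $\epsilon\, n R^{n-1}$ with $R = 1+\max_i|\lambda_i|+\delta$) where the paper simply invokes continuous dependence of a polynomial's values on its coefficients; this is a harmless refinement, not a different argument.
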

\begin{proof}
    For each $0 \leq i \leq r$ consider the circle $C_i = \{z \in \mathbb{C}: |z - \lambda_i| = \delta\}$, which by construction contains no root of $p(z)$. Let
    \[\mu = \min_{0 \leq i \leq r} \min_{z \in C_i} |p(z)|\]
    where the minima are achieved and positive by compactness of the circles and continuity of $p$. Then since the values of a polynomial over $\mathbb{C}$ depend continuously on its coefficients, we can find an $\epsilon > 0$ such that if $q(z) = z^n + b_{n - 1}z^{n - 1} + \cdots + b_1z + b_0$ is a polynomial with $|a_k - b_k| < \epsilon$ then
    \[\max_{0 \leq i \leq r} \max_{z \in C_i} |q(z) - p(z)| < \mu\]
    But then for any such $q(z)$, we have $|q(z) - p(z)| < |p(z)|$ for all $z \in C_i$, which by Rouch\'e's Theorem implies that $p(z)$ and $q(z)$ have the same number of zeroes in the disk $\{z \in \mathbb{C} : |z - \lambda_i| < \delta\}$ as required.
\end{proof}

We can reformulate the above lemma as follows. Given a monic polynomial $p \in \mathbb{C}[z]$, let us denote by $\sigma_p$ the multiset of roots of $p$. Here, we view $\sigma_p$ as an element of the quotient space $\mathbb{C}^n/S_n$, which we endow with the quotient topology. In these terms, the above lemma becomes the following. 

\begin{cor}\label{cor:continuityOfRootMultiset}
    The map
    \begin{align*}
        \sigma: \bC^n & \longrightarrow \bC^n/S_n \\
        (a_0, \dots, a_{n - 1}) & \longmapsto \sigma_{z^n + a_{n - 1}z^{n - 1} + \cdots + a_1z + a_0}
    \end{align*}
    taking the coefficients of a polynomial in $\mathbb{C}[z]$ to its multiset of roots is continuous. In particular, given a monic polynomial $P(x,z)$ with coefficients in $C(X)$, the map $X \to \mathbb{C}^n/S_n , x_0  \mapsto \sigma_{P(x_0,z)}$ is continuous. 
\end{cor}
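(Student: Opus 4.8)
The plan is to prove continuity of $\sigma$ at each point by producing an explicit neighbourhood basis of the quotient $\bC^n/S_n$ and then feeding Lemma \ref{lem:continuousDependenceOfRoots} directly into it. Write $q \colon \bC^n \to \bC^n/S_n$ for the quotient map. The first observation is that $q$ is an \emph{open} map: since $S_n$ acts by homeomorphisms, for any open $U \seq \bC^n$ the saturation $q^{-1}(q(U)) = \bigcup_{g \in S_n} gU$ is open, so $q(U)$ is open. This is what lets us describe neighbourhoods in the quotient as images of neighbourhoods upstairs.

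Fix $p_0 = (a_0, \dots, a_{n-1})$ whose root multiset consists of distinct roots $\mu_1, \dots, \mu_r$ with multiplicities $m_1, \dots, m_r$, and fix $\delta \in (0, \tfrac12 \min_{i \neq j}|\mu_i - \mu_j|)$ so that the disks $B_\delta(\mu_i)$ are pairwise disjoint. I would set
\[
O_\delta = \bigl\{\, \sigma_p : p \in \bC^n \text{ has exactly } m_i \text{ roots (with multiplicity) in } B_\delta(\mu_i) \text{ for each } i \,\bigr\} \seq \bC^n/S_n,
\]
and claim that $\{O_\delta\}_\delta$ is a neighbourhood basis of $\sigma_{p_0}$. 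Openness of $O_\delta$ follows from openness of $q$, since $q^{-1}(O_\delta)$ is the union, over all $f \colon \{1,\dots,n\} \to \{1,\dots,r\}$ with $|f^{-1}(i)| = m_i$, of the open boxes $\prod_j B_\delta(\mu_{f(j)})$. That these sets are cofinal among neighbourhoods of $\sigma_{p_0}$ is the one genuinely fiddly step: a neighbourhood of $\sigma_{p_0}$ corresponds to an $S_n$-invariant open $N$ containing the orbit $q^{-1}(\sigma_{p_0})$; choosing a box $\prod_j B_{\delta_0}(\lambda_j) \seq N$ around an enumeration $(\lambda_1, \dots, \lambda_n)$ of the roots and observing that every tuple in $q^{-1}(O_\delta)$ is, after a suitable permutation, within $\delta$ of $(\lambda_1, \dots, \lambda_n)$ coordinatewise, one concludes $q^{-1}(O_\delta) \seq N$ whenever $\delta \leq \delta_0$, using $S_n$-invariance of $N$.

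With the basis in hand, continuity of $\sigma$ at $p_0$ is immediate: given $O_\delta$, Lemma \ref{lem:continuousDependenceOfRoots} supplies $\epsilon > 0$ so that every monic $q$ with coefficients within $\epsilon$ of those of $p_0$ has exactly $m_i$ roots in each $B_\delta(\mu_i)$, i.e.\ $\sigma_q \in O_\delta$; hence $\sigma$ carries the $\epsilon$-ball around $p_0$ into $O_\delta$. The final clause then drops out formally, since for $P(x,z)$ over $C(X)$ the map $x_0 \mapsto \sigma_{P(x_0,z)}$ factors as the continuous coefficient map $X \to \bC^n$ followed by $\sigma$, and composites of continuous maps are continuous.

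I expect the main obstacle to be the cofinality claim rather than the application of the Lemma: all the bookkeeping about the $S_n$-action lives there, and one must be careful at points $p_0$ with repeated roots, where the orbit $q^{-1}(\sigma_{p_0})$ has fewer than $n!$ points and the relevant boxes degenerate. An alternative, slicker route sidesteps this entirely: the Vieta map $\bC^n \to \bC^n$ sending roots to coefficients is continuous, $S_n$-invariant, and proper (properness is precisely the root bound underlying Lemma \ref{lem:polyRootBound}), so the induced continuous bijection $\bC^n/S_n \to \bC^n$ is closed and hence a homeomorphism, with inverse $\sigma$. I would most likely present the direct argument, as it reuses Lemma \ref{lem:continuousDependenceOfRoots} verbatim, and relegate the properness argument to a remark.
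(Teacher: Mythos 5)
Your proof is correct and takes the same route the paper intends: the paper states this corollary without proof, as an immediate reformulation of Lemma \ref{lem:continuousDependenceOfRoots}, and your argument simply supplies the point-set details (the openness of the sets $O_\delta$, their cofinality among neighbourhoods of $\sigma_{p_0}$ in $\bC^n/S_n$, and the factorization through the coefficient map for the last clause) that make that reformulation rigorous. Your alternative via properness of the Vieta map is also valid, but the direct argument is the one that matches the paper's use of the lemma.
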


For each complex number $z \in \mathbb{C}$ and each multiset $A \in \mathbb{C}^n/S_n$, we define the \emph{distance from $z$ to $A$} to be the number $d(z,A) := \min_{w \in A} |z-w|$. This assignment
\begin{align*}
    \bC \times (\bC^n/S_n) & \longrightarrow [0, \infty) \\
    (z, A) & \longmapsto d(z, A)
\end{align*}
is a continuous function. Using these facts, we now show that approximate roots of $P(x,z)$ are uniformly close to the multiset of roots of $P(x,z)$. This is reminiscent of the notion of a \emph{weakly stable predicate} in the sense of \cite[Definition 3.2.4]{FarahEtAl}. 

\begin{lem}\label{lem:approxSolutionsAreCloseToRoots}
    Let $X$ be a compactum, and let $P(x,z) = z^n + a_{n-1}(x)z^{n-1} + \cdots + a_1(x)z + a_0(x)$ be a monic polynomial with coefficients in $C(X)$. Then given any $\delta > 0$ there exists an $\epsilon > 0$ such that all $\epsilon$-approximate solutions $f \in C(X)$ of $P$ are within $\delta$ of one of the roots of $P(x, z)$, i.e. $d(f(x), \sigma_{P(x,z)}) < \delta$ for all $x \in X$. 
\end{lem}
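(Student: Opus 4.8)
The plan is to reduce the statement to a purely pointwise estimate that is in fact uniform in $x$ and independent of the particular polynomial $P$. The key observation is that for a monic polynomial the modulus of its value at a point equals exactly the product of the distances from that point to the roots. Concretely, writing $P(x_0,z) = \prod_{i=1}^n (z - \lambda_i)$ for the factorization over $\mathbb{C}$ guaranteed by the Fundamental Theorem of Algebra (with roots listed with multiplicity, so that $\sigma_{P(x_0,z)}$ is the multiset $\{\lambda_1, \dots, \lambda_n\}$), we have $|P(x_0,w)| = \prod_{i=1}^n |w - \lambda_i|$ for every $w \in \mathbb{C}$.

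From this the main step is immediate. I would suppose that $w$ lies at distance at least $\delta$ from every root, i.e.\ $d(w, \sigma_{P(x_0,z)}) \geq \delta$, so that $|w - \lambda_i| \geq \delta$ for each $i$. Then $|P(x_0,w)| = \prod_{i=1}^n |w - \lambda_i| \geq \delta^n$. Taking the contrapositive yields the pointwise principle: whenever $|P(x_0,w)| < \delta^n$, we must have $d(w, \sigma_{P(x_0,z)}) < \delta$.

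I would therefore set $\epsilon := \delta^n$. If $f \in C(X)$ is an $\epsilon$-approximate solution, then $|P(x,f(x))| < \epsilon = \delta^n$ for every $x \in X$, and applying the pointwise principle above with $w = f(x)$ and $x_0 = x$ gives $d(f(x), \sigma_{P(x,z)}) < \delta$ for all $x \in X$, as required.

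The point worth flagging is that there is essentially no obstacle here: the chosen $\epsilon$ depends only on $\delta$ and the degree $n$, and not on $x$ nor on the coefficients of $P$, so the argument needs neither compactness of $X$ nor the continuity statements of Corollary \ref{cor:continuityOfRootMultiset}. A more laborious alternative would proceed by contradiction, extracting a sequence of $(1/k)$-approximate solutions witnessing failure at points $x_k$, bounding the values $f_k(x_k)$ by Lemma \ref{lem:polyRootBound}, and passing to convergent subsequences using compactness of $X$ and continuity of $d$ and $\sigma$; but the direct product estimate makes all of this unnecessary.
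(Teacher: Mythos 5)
Your proof is correct, and it takes a genuinely different and more elementary route than the paper's. The identity $|P(x_0,w)| = \prod_{i=1}^n |w-\lambda_i|$ for a monic polynomial immediately gives the pointwise implication ($d(w,\sigma_{P(x_0,z)}) \geq \delta \Rightarrow |P(x_0,w)| \geq \delta^n$), and taking $\epsilon = \delta^n$ finishes the argument. The paper instead runs a soft compactness argument: it forms the set $Y$ of pairs $(x,w)$ with $|w| \leq R$ and $d(w,\sigma_{P(x,z)}) \geq \delta$, uses Lemma \ref{lem:polyRootBound} to justify the a priori bound $R$ on approximate solutions, uses Corollary \ref{cor:continuityOfRootMultiset} to see that $Y$ is closed (hence compact), and takes $\epsilon$ to be the minimum of $|P|$ over $Y$. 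Your approach buys an explicit $\epsilon$ that is uniform over all monic polynomials of degree $n$ (it depends only on $n$ and $\delta$, not on the coefficients or on $X$), and it dispenses with the compactness of $X$, the root bound, and the continuity of the root-multiset map entirely; you are right that none of these are needed here. What the paper's softer argument buys is generality of method: it is the template one would use for a predicate whose zero set has no exploitable product formula (this is the "weakly stable predicate" framework the authors allude to), whereas your argument leans on the specific multiplicative structure of monic polynomials. For this particular lemma your proof is strictly cleaner.
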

\begin{proof}
    Let $R := 2 + \max_{0 \leq i < n} \norm{a_i}$ and consider the set
    \[Y := \{(x, w) \in X \times \bC \ : \ |w| \leq R, \quad d(w, \sigma_{P(x,z)}) \geq \delta\}\]
    Using Corollary \ref{cor:continuityOfRootMultiset} we see that the function $(x, w) \mapsto d(w, \sigma_{P(x,z)})$ is continuous, and hence $Y$ is a closed set. Being a subset of the compact $X \times \{w \in \mathbb{C} : |w| \leq R\}$, we see that $Y$ is also compact. Let 
    \[ \epsilon := \min (\{ |P(x,w)| : (x,w) \in Y \} \cup \{1\}) \]
    which is positive as $Y$ does not contain a pair $(x, w)$ such that $P(x,w) = 0$. \par
    Now assume that $f \in C(X)$ is an $\epsilon$-approximate solution of $P$. Then by Lemma \ref{lem:polyRootBound} we get that $|f(x)| \leq 1 + \epsilon + \max_{0 \leq i < n} \norm{a_i} \leq R$ for all $x \in X$. However for each $x_0 \in X$, we have that $|P(x_0,f(x_0))| < \epsilon$, which by our choice of $\epsilon$, implies that $|P(x_0,f(x_0))| < |P(x,w)|$ for all $(x,w) \in Y$. But this means that for each $x_0 \in X$, the pair $(x_0, f(x_0))$ cannot be in $Y$, and since $|f(x_0)| \leq R$ this is only possible when $d(f(x_0), \sigma_{P(x_0,z)}) < \delta$.
\end{proof}

This allows us to prove our first stability result: for a polynomial with no repeated roots, having approximate roots is equivalent to having an exact root. 

\begin{thm}\label{thm:noRepeatRootsStability}
    Let $X$ be a compactum. If $P(x,z)$ is a monic polynomial with coefficients in $C(X)$ which has no repeated roots, then $P$ has approximate roots if and only if it has an exact root. 
\end{thm}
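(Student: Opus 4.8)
The plan is to prove the two implications separately, with the forward direction being immediate and essentially all the work lying in the converse. If $P$ has an exact root $f \in C(X)$, then $P(x,f(x)) = 0$ for every $x \in X$, so trivially $\abs{P(x,f(x))} < \epsilon$ for all $x$ and all $\epsilon > 0$; hence $f$ is an $\epsilon$-approximate root for every $\epsilon$, and $P$ has approximate roots.

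For the converse, the key starting observation is that because $P$ has no repeated roots, the roots of $P(x,z)$ stay uniformly separated as $x$ ranges over the compactum $X$. Writing $\sigma_{P(x,z)} = \{\lambda_1(x), \dots, \lambda_n(x)\}$ for the (now genuinely distinct) roots, the minimal-gap function $g(x) := \min_{i \neq j}\abs{\lambda_i(x) - \lambda_j(x)}$ is continuous (using Corollary \ref{cor:continuityOfRootMultiset}) and strictly positive, so by compactness of $X$ it attains a positive minimum; I set $2\delta_0 > 0$ to be this minimum. I then feed $\delta = \delta_0$ into Lemma \ref{lem:approxSolutionsAreCloseToRoots} to obtain an $\epsilon > 0$ with the property that every $\epsilon$-approximate solution $f$ satisfies $d(f(x), \sigma_{P(x,z)}) < \delta_0$ for all $x$. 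Since $P$ has approximate roots, I may fix such an $f \in C(X)$. Because $f(x)$ lies within $\delta_0$ of the root set while distinct roots are at least $2\delta_0$ apart, there is for each $x$ a \emph{unique} root of $P(x,z)$ within distance $\delta_0$ of $f(x)$; call it $\widetilde{f}(x)$. By construction $\widetilde{f}(x)$ is a root of $P(x,z)$, so $P(x,\widetilde{f}(x)) = 0$ for all $x$, and it only remains to show that $\widetilde{f}$ is continuous.

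This continuity is the crux of the argument. To establish it at an arbitrary point $x_0$, I would produce a local continuous root-selection: applying Lemma \ref{lem:continuousDependenceOfRoots} at $x_0$ with any radius smaller than $\delta_0$, continuity of the coefficient map $x \mapsto (a_0(x), \dots, a_{n-1}(x))$ yields a neighbourhood $U$ of $x_0$ on which each root of $P(x_0,z)$ deforms to a unique nearby root of $P(x,z)$, defining continuous functions $\mu_1, \dots, \mu_n$ on $U$ with $\mu_i(x) \in \sigma_{P(x,z)}$. Letting $i_0$ be the index with $\widetilde{f}(x_0) = \mu_{i_0}(x_0)$, I would then shrink $U$ using continuity of $f$ and of $\mu_{i_0}$ so that $\abs{f(x) - \mu_{i_0}(x)} < \delta_0$ throughout $U$; uniqueness of the root within $\delta_0$ of $f(x)$ forces $\widetilde{f}(x) = \mu_{i_0}(x)$ on $U$, exhibiting $\widetilde{f}$ as locally equal to a continuous function, hence continuous at $x_0$.

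The main obstacle is precisely this local root-selection step — verifying that a single simple root sitting in a small disk varies continuously with the coefficients. This can be justified directly from Lemma \ref{lem:continuousDependenceOfRoots}, which guarantees that for $x$ near $x_0$ the disk contains exactly one root, together with the separation bound $2\delta_0$, which ensures that the selections $\mu_i$ never collide and that $f(x)$ cannot be confused between two neighbouring roots. Once $\widetilde{f} \in C(X)$ is confirmed, it is an exact root of $P$ by construction, completing the converse.
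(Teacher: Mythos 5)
Your proposal is correct and follows essentially the same route as the paper: use the uniform separation of the roots (via compactness) together with Lemma \ref{lem:approxSolutionsAreCloseToRoots} to select, for each $x$, the unique root of $P(x,z)$ near $f(x)$, and then verify that this selection is continuous. The only cosmetic difference is in the continuity step, where you pass through local continuous root branches built from Lemma \ref{lem:continuousDependenceOfRoots}, whereas the paper argues directly with the continuity of the root-multiset map $x \mapsto \sigma_{P(x,z)}$ and a triangle-inequality estimate; both rest on the same uniqueness-of-the-nearby-root observation.
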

\begin{proof}
    Clearly if $P$ has an exact root then it has approximate roots. Conversely assume that $P$ has approximate roots. Let $\delta : X \to [0,\infty)$ be the function which assigns to each $x \in X$, the minimum distance between any two roots of of the complex polynomial $P(x,z)$, i.e. 
    \begin{align*}
        \delta(x) = \min\{ |\lambda - \mu| : \lambda, \mu \in \sigma_{P(x,z)}, \lambda \neq \mu \}
    \end{align*}
    Observe that $\delta$ is non-vanishing since $P$ has no repeated roots, and it is a continuous function by Lemma \ref{lem:continuousDependenceOfRoots}. In particular, it achieves a minimum since $X$ is compact; let $\alpha := \frac{1}{2}\min_{x \in X} \delta(x)$. By Lemma \ref{lem:approxSolutionsAreCloseToRoots}, we can find an $\epsilon > 0$ such that for all $\epsilon$-approximate solutions $f$ of $P$, we have that $d(f(x),\sigma_{P(x,z)}) < \frac{\alpha}{3}$ for all $x \in X$. \par
    Now since $P$ has approximate roots, we can find an $\epsilon$-approximate solution $f \in C(X)$ of $P$; by above we have that $d(f(x), \sigma_{P(x,z)}) < \frac{\alpha}{3}$ for all $x \in X$. This means that for each $x \in X$, there exists some $\lambda_x \in \sigma_{P(x,z)}$ such that $|f(x) - \lambda_x| < \frac{\alpha}{3}$; in fact, this $\lambda_x$ is unique, since if $\mu_x \in \sigma_{P(x,z)}$ is any other root of $P(x,z)$, then 
    \begin{align*}
        |f(x) - \mu_x| \geq |\lambda_x - \mu_x| - |f(x) - \lambda_x| \geq \delta(x) - \frac{\alpha}{3} \geq 2\alpha - \frac{\alpha}{3} \geq \frac{5}{3}\alpha \tag{$\clubsuit$}
    \end{align*}
    Thus we obtain a well-defined function $g : X \to \mathbb{C}$ given by $g(x) = \lambda_x$. By definition we have that $P(x,g(x)) = 0$ for all $x \in X$, so to show that $P$ has an exact root, it suffices to show that $g$ is continuous. To this end, take any $x_0 \in X$ and assume that $\epsilon' > 0$ is given. Since $f$ is continuous, and by Corollary \ref{cor:continuityOfRootMultiset}, the function $X \to \mathbb{C}^n/S_n , x  \mapsto \sigma_{P(x,z)}$ is continuous and $g(x_0) \in \sigma_{P(x_0,z)}$, we can find an open subset $U$ of $x_0$ such that whenever $x \in U$, we have that $d(g(x_0), \sigma_{P(x,z)}) < \min(\frac{\alpha}{3}, \epsilon')$ and $|f(x) - f(x_0)| < \frac{\alpha}{3}$. But this implies that for all $x \in U$, there is some $\lambda \in \sigma_{P(x,z)}$ such that $|\lambda - g(x_0)| < \min(\frac{\alpha}{3}, \epsilon')$, which implies that 
    \begin{align*}
        |\lambda - f(x)| &\leq |\lambda - g(x_0)| + |g(x_0) - f(x_0)| + |f(x_0) - f(x)| \\
        &\leq \frac{\alpha}{3} + \frac{\alpha}{3} + \frac{\alpha}{3} = \alpha
    \end{align*}
    The calculation $(\clubsuit)$ above shows that if $\mu$ is any root of $P(x,z)$ which is not $g(x)$, then it must be that $|f(x) - \mu| \geq \frac{5}{3}\alpha$. But since $|\lambda - f(x)| \leq \alpha$ and $\lambda \in \sigma_{P(x,z)}$, this means that $\lambda = g(x)$. Thus $|g(x) - g(x_0)| = |\lambda - g(x_0)| < \epsilon'$. As this holds for all $x \in U$, this proves that $g$ is continuous, as desired. 
\end{proof}

In particular, the notion of having approximate roots is interesting only for polynomials which can have repeated roots. Instead of placing a condition on the kind of polynomials we consider, we can instead ask the question of what kind of spaces $X$ have the property that every monic polynomial over $C(X)$ having approximate roots actually has exact roots. Our second stability result is the following theorem, which shows that the main topological obstruction to passing from approximate solutions to exact solutions is local-connectedness. This is a generalization of \cite{Miura1999}, where it is shown that if $X$ is locally connected, then $C(X)$ is square-root closed if and only if it has approximate square roots. 

\begin{thm}\label{thm:localConnectednessGivesExactFromApprox}
    Let $X$ be a locally connected compactum. If $P(x,z)$ is a monic polynomial with coefficients in $C(X)$, then $P$ has approximate roots if and only if it has an exact root.
\end{thm}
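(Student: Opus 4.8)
The converse direction (an exact root is in particular an approximate root for every $\epsilon$) is immediate, so the task is to manufacture an \emph{exact} root from the hypothesis that approximate roots exist. The plan is to combine two reductions with a ``nearest-root'' selection. First, since a locally connected compactum has open connected components and is compact, it has only finitely many components, each of which is clopen; a continuous root on each component assembles to a continuous root on $X$, and the restriction of an approximate root is again an approximate root, so I may assume $X$ is connected. Second, Theorem \ref{thm:noRepeatRootsStability} already disposes of the case where $P$ has no repeated roots, so the genuinely new content — and the only place local connectedness is needed — is the presence of repeated roots.

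For the construction, fix via Lemma \ref{lem:approxSolutionsAreCloseToRoots} a good approximate root $f$, meaning $\delta$ is small and $r(x) := d(f(x), \sigma_{P(x,z)}) < \delta$ for all $x$. Let $g(x)$ be a root of $P(x,z)$ nearest to $f(x)$; then $P(x, g(x)) = 0$ pointwise and $\abs{f(x) - g(x)} = r(x)$ is continuous by the continuity of the distance-to-multiset function. By Lemma \ref{lem:polyRootBound} the graph of $g$ lies in a compact box $X \times \overline{B_R}$, and for a single-valued $g$ a closed graph already forces continuity (the projection of the graph to $X$ is then a continuous bijection from a compact space onto a Hausdorff space, hence a homeomorphism). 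Thus it suffices to choose the nearest root at each point so that the resulting graph is closed, i.e.\ to select a continuous section of the nearest-root correspondence.

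The key observation, extracted from the limiting computation in the proof of Theorem \ref{thm:noRepeatRootsStability}, is that for any $x_n \to x_0$ with $g(x_n) \to w$, the limit $w$ is \emph{again} a nearest root to $f(x_0)$, since $\abs{f(x_0) - w} = \lim d(f(x_n),\sigma_{P(x_n,z)}) = d(f(x_0),\sigma_{P(x_0,z)})$. Hence $g$ can fail to be continuous only where $f(x_0)$ is equidistant, at the minimal distance, to two distinct roots of $P(x_0,z)$, and any two such roots lie within $2r(x_0) < 2\delta$ of each other. This splits into two regimes. In the first, the competing roots belong to a family that collides along the repeated-root locus; there they become genuinely equal, so any switch of $g$ between them is harmless and continuity holds automatically. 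In the second, the competing roots stay separated, and here I would use local connectedness together with continuity of $f$: on a connected neighbourhood the approximate root is trapped near a single sheet of the covering $Z \setminus (\text{branch locus}) \to X$ (where $Z$ is the root variety), so no switch can occur. I expect the technical heart — and the main obstacle — to be making this dichotomy \emph{uniform}: covering $X$ by connected open sets over which $f$ tracks one branch, verifying these local branches extend continuously across the repeated-root locus where the sheets merge, and checking that the mere existence of approximate roots for all $\epsilon$ rules out any monodromy obstruction to patching the local branches into a single global continuous root. This last point is exactly where local connectedness (rather than connectedness alone) is indispensable, mirroring the failure of such patching in higher-dimensional examples elsewhere in the paper.
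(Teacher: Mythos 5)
There is a genuine gap, and it sits exactly where you flag it: the ``nearest-root'' selection $g$ from a single approximate root $f$ need not be continuous, and the dichotomy/patching argument you sketch to repair this is left unproved — it is the entire content of the theorem, not a technicality. Concretely, take $X = [0,1]$ and $P(x,z) = z^2 - x^2$, with roots $\pm x$. Fix $\epsilon > 0$ and let $f(x) = x\sin(1/x)$ for $0 < x \leq \sqrt{\epsilon}/2$ (with $f(0)=0$), extended continuously to equal $x$ for $x \geq \sqrt{\epsilon}$. Then $\abs{P(x,f(x))} = x^2\abs{\sin^2(1/x) - 1} \leq x^2 < \epsilon$ on the small interval and $=0$ elsewhere, so $f$ is an $\epsilon$-approximate root; but the nearest root to $f(x)$ is $x$ when $\sin(1/x)>0$ and $-x$ when $\sin(1/x)<0$, so $g$ jumps by $2x \neq 0$ at each $x = 1/(n\pi)$. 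The graph of this $g$ is not closed, your ``key observation'' (that any limit of $g(x_n)$ is again a nearest root to $f(x_0)$) holds but does not single out $g(x_0)$ at equidistant points, and no choice of $\delta$ helps: unlike in Theorem \ref{thm:noRepeatRootsStability}, where $\delta$ is taken below half the global minimum root separation $\alpha > 0$, a polynomial with repeated roots admits no such positive lower bound, so ``$\delta$ small'' has no uniform meaning and the uniqueness of the nearest root fails on a large set. Your proposed second regime also leans on a covering $Z\setminus(\text{branch locus}) \to X$ whose base can be badly behaved (the branch locus is just some closed subset of $X$, possibly all of it), and the claim that local branches ``extend continuously across the repeated-root locus'' is precisely the assertion to be proved.

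The paper's proof avoids selections entirely. It takes a sequence $f_k$ of $\frac{1}{k}$-approximate roots and shows $\{f_k\}$ is uniformly bounded (Lemma \ref{lem:polyRootBound}) and equicontinuous; Arzel\`a--Ascoli then yields a uniformly convergent subsequence whose limit $f_\infty$ satisfies $P(x,f_\infty(x)) = \lim_l P(x, f_{k_l}(x)) = 0$. Local connectedness enters only in the equicontinuity step: near $x_0$ the roots of $P(x,z)$ lie in small disjoint disks around the distinct roots $\lambda_1,\dots,\lambda_r$ of $P(x_0,z)$, each $f_k$ (for $k$ large) lands in the union of these disks by Lemma \ref{lem:approxSolutionsAreCloseToRoots}, and a \emph{connected} neighbourhood $U$ forces the connected set $f_k(U)$ into a single disk, giving $\abs{f_k(x) - f_k(x_0)} < \epsilon$ on $U$ uniformly in $k$. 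If you want to salvage your approach, the fix is to run your ``trapped near a single sheet'' observation on the $f_k$ themselves (where it is exactly the paper's equicontinuity argument) rather than on a discontinuous selection derived from one $f$.
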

\begin{proof}
    Obviously if $P(x,z)$ has an exact root, then it has approximate roots. Conversely assume that $P(x,z)$ has approximate roots. Then for each integer $k \geq 1$, we can find a $\frac{1}{k}$-approximate root $f_k \in C(X)$ of $P(x,z)$. If we can show that $\{f_k\}_{k = 1}^\infty$ has a convergent subsequence $\{f_{k_{l}}\}_{l=1}^{\infty}$ converging to some $f_\infty \in C(X)$, then passing to this subsequence we get
    \[|P(x, f_\infty(x))| = \lim_{l \to \infty} |P(x, f_{k_l}(x))| \leq \lim_{k \to \infty} \frac{1}{k} = 0\]
    for each $x \in X$, showing that $f_\infty$ is an exact solution. To this end, we will show that $\{f_k\}_{k = 1}^\infty$ has a convergent subsequence by checking that this sequence is uniformly bounded and equicontinuous, and then applying the Arzelà–Ascoli Theorem. \par
    By Lemma \ref{lem:polyRootBound}, we have that $\norm{f_k}_\infty \leq 2 + M$ (where $M = \max\{\norm{a_0}, \ldots, \norm{a_{n-1}}\}$) for all $k$, showing the sequence is uniformly bounded. 
    For equicontinuity, pick a point $x_0 \in X$ and $\epsilon > 0$. Then we may factor our polynomial at this point
    \[P(x_0, z) = \prod_{i = 1}^r (z - \lambda_i)^{m_i}\]
    for some distinct roots $\lambda_i \in \mathbb{C}$ with respective multiplicities $m_i$. Let $\epsilon_0 := \frac{1}{2}\min(\epsilon, \min_{i \neq j} |\lambda_i - \lambda_j|)$. By Lemma \ref{lem:continuousDependenceOfRoots} we can find a neighbourhood $U_1 \subset X$ of $x_0$ such that 
    \begin{align*}
        \bigcup_{x \in U_1} \{ w \in \mathbb{C} : P(x,w) = 0 \} \subset \bigcup_{i=1}^{r} B_{\frac{\epsilon_0}{4}}(\lambda_i) 
    \end{align*}
    and since $X$ is locally connected, we can find a connected open neighbourhood $U$ of $x_0$ with $U \subset U_1$. Now by Lemma \ref{lem:approxSolutionsAreCloseToRoots}, we can find a positive integer $k_0$ such that $d(f_k(x), \sigma_{P(x,z)}) < \frac{\epsilon_0}{4}$ for all $k \geq k_0$ and all $x \in X$. Hence if $x \in U$ and $k \geq k_0$, then we can find a root $w_{k,x}$ of $P(x,z)$ such that $|f_k(x) - w_{k,x}| < \frac{\epsilon_0}{4}$, and an $i_{k,x} \in \{1, \ldots, r\}$ such that $w_{k,x} \in B_{\frac{\epsilon_0}{4}}(\lambda_{i_{k,x}})$. The triangle inequality immediately implies $|f_k(x) - \lambda_{i_{k,x}}| < \frac{\epsilon_0}{4} + \frac{\epsilon_0}{4} = \frac{\epsilon_0}{2}$ for all $ k\geq k_0$ and all $x \in U$. It follows that $f_k(x) \in \bigcup_{i = 1}^r B_{\frac{\epsilon_0}{2}}(\lambda_i)$ for all $x \in U$ and $k \geq k_0$. However, since $\epsilon_0 < \min_{i \neq j} |\lambda_i - \lambda_j|$, it follows that the family of disks $\{B_{\frac{\epsilon_0}{2}}(\lambda_i)\}_{i=1}^{r}$ are disjoint, so by connectedness of $U$ it follows that for each $k \geq k_0$, $f_k(U)$ must lie in a single such disk; that is, for each $k \geq k_0$ we can find some $i_k$ such that $f_k(U) \subset B_{\frac{\epsilon_0}{2}}(\lambda_{i_k})$. Therefore for all $x \in U$ and all $k \geq k_0$ we have that:
    \begin{align*}
        |f_k(x) - f_k(x_0)| \leq |f_k(x) - \lambda_{i_k}| + |\lambda_{i_k} - f_k(x_0)| < \frac{\epsilon_0}{2} + \frac{\epsilon_0}{2} < \epsilon
    \end{align*}
    Therefore the sequence $\{f_k\}_{k=k_0}^{\infty}$ is equicontinuous, and since $f_1, \ldots, f_{k_0}$ are continuous functions, it follows that the sequence $\{f_k\}_{k=1}^{\infty}$ is equicontinuous as desired. 
\end{proof}

\begin{cor}\label{cor:local_conn_approxAC_is_AC}
    If $X$ is a locally connected compactum, then $C(X)$ is approximately algebraically closed if and only if it is algebraically closed. 
\end{cor}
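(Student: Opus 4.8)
The plan is to deduce this immediately from Theorem \ref{thm:localConnectednessGivesExactFromApprox} by quantifying over all non-constant monic polynomials. Recall that by Definition \ref{defn:approx_alg_closed_compatum}, $C(X)$ is approximately algebraically closed precisely when every non-constant monic $P \in C(X)[z]$ has approximate roots, and by Definition \ref{defn:alg_closed_compactum}, $C(X)$ is algebraically closed precisely when every such $P$ has an exact root. So it suffices to show that, for a fixed non-constant monic $P$, having approximate roots is equivalent to having an exact root. But this is exactly the content of Theorem \ref{thm:localConnectednessGivesExactFromApprox}, which applies since $X$ is a locally connected compactum.

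Concretely, I would fix an arbitrary non-constant monic polynomial $P \in C(X)[z]$ (equivalently, one of degree $n \geq 1$). For the forward direction, assuming $C(X)$ is approximately algebraically closed, $P$ has approximate roots, so Theorem \ref{thm:localConnectednessGivesExactFromApprox} yields that $P$ has an exact root; since $P$ was arbitrary, $C(X)$ is algebraically closed. For the reverse direction, assuming $C(X)$ is algebraically closed, $P$ has an exact root, which trivially gives approximate roots (the easy implication already observed at the start of the proof of Theorem \ref{thm:localConnectednessGivesExactFromApprox}); again, as $P$ was arbitrary, $C(X)$ is approximately algebraically closed.

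There is no substantive obstacle here. All of the analytic content — the uniform root bound of Lemma \ref{lem:polyRootBound}, the proximity of approximate solutions to the root multiset, and the equicontinuity and Arzel\`a--Ascoli argument — has already been carried out in the proof of Theorem \ref{thm:localConnectednessGivesExactFromApprox}. The corollary is simply that theorem restated with the quantifier over all non-constant monic $P$ made explicit, so the proof is a one-line appeal to the theorem in each direction.
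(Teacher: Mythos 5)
Your proposal is correct and matches the paper's intent exactly: the corollary is stated immediately after Theorem \ref{thm:localConnectednessGivesExactFromApprox} with no separate proof, precisely because it follows by quantifying that theorem over all non-constant monic polynomials, as you do. Nothing further is needed.
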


We now turn our attention to perturbation, i.e. describing a situation in which we can perturb a polynomial so that it has no repeated roots. As noted in Section \ref{subsec:prelim_spaces_of_polys}, the space of polynomials with no repeated roots is identified with the subset $C(X, B_n)$ of the C*-algebra $C(X,\mathbb{C}^n)$, so precisely the perturbation question means describing when $C(X,B_n)$ is dense in $C(X,\mathbb{C}^n)$. \par

\begin{thm}\label{thm:noRepeatRootsDense}
    Let $X$ be a compactum with $\dim X \leq 1$, and let $P:X \to \bC^n$ be a monic polynomial over $C(X)$ with degree $n$. Then for all $\epsilon > 0$, there exists a monic polynomial over $C(X)$ of degree $n$ with no repeated roots $Q: X \to B_n$ such that $\norm{P-Q} < \epsilon$.
\end{thm}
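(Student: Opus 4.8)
The plan is to unwind the identification between monic polynomials and continuous maps into $\mathbb{C}^n$, and then invoke Corollary \ref{cor:discIsDense} directly. Recall from Section \ref{subsec:prelim_spaces_of_polys} that a monic polynomial $P$ of degree $n$ over $C(X)$ is precisely the data of a continuous map $P : X \to \mathbb{C}^n$ recording its coefficients, that the Banach-space norm $\norm{P-Q}$ on such polynomials is by construction the sup-norm distance on $C(X,\mathbb{C}^n)$, and that $P$ has no repeated roots exactly when its image avoids the discriminant variety $V(\Delta)$, i.e. exactly when $P$ takes values in $B_n = \mathbb{C}^n \setminus V(\Delta)$. In the notation of Section \ref{sec:pertStab}, this says $C(X,B_n) = \mathcal{A}(X,V(\Delta))$.

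First I would restate the conclusion in these terms: it asserts exactly that $\mathcal{A}(X,V(\Delta)) = C(X,B_n)$ is dense in $C(X,\mathbb{C}^n)$. This is precisely the content of Corollary \ref{cor:discIsDense} (which also records openness, though only density is needed here). Thus, given a monic polynomial $P : X \to \mathbb{C}^n$ of degree $n$ and any $\epsilon > 0$, density furnishes a map $Q \in \mathcal{A}(X,V(\Delta))$ with $\norm{P-Q} < \epsilon$; viewing $Q$ as a monic degree-$n$ polynomial over $C(X)$, the fact that its image avoids $V(\Delta)$ means precisely that $Q : X \to B_n$ has no repeated roots, which is the desired conclusion.

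There is essentially no remaining obstacle at this stage: the substantive work has already been carried out upstream. Indeed, the proof of Corollary \ref{cor:discIsDense} stratifies $V(\Delta)$ into smooth pieces each of real codimension at least two, Lemma \ref{lem:coverMnfldWithPlanes} covers each such piece by diffeomorphic images of a coordinate hyperplane, and Proposition \ref{prop:codim2Dense} runs a Baire category argument resting on Lemma \ref{lem:localDenseInDim1}, where the hypothesis $\dim X \leq 1$ enters through the density of the invertible elements in $C(X)$. The only thing that must be checked here is that the bookkeeping of these identifications is consistent — in particular that the norm on the space of monic polynomials literally coincides with the sup-norm on $C(X,\mathbb{C}^n)$, so that $\epsilon$-closeness of coefficients and $\epsilon$-closeness of the associated maps are the same condition — which holds by construction. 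Hence the theorem follows immediately.
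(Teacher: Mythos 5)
Your proposal is correct and is exactly the paper's own argument: the theorem is a direct restatement of Corollary \ref{cor:discIsDense} via the identification $C(X,B_n)=\mathcal{A}(X,V(\Delta))$, and the paper's proof consists of precisely this one-line observation. Your additional remarks on the norm bookkeeping and the upstream structure are accurate but not needed beyond that identification.
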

\begin{proof}
    We need to show that $C(X, B_n)$ is dense in $C(X, \bC^n)$. Note that $C(X,B_n)$ is precisely the set of functions in $C(X,\mathbb{C}^n)$ whose image does not intersect the monic discriminant variety $V(\Delta)$.

    First, observe that $V(\Delta)$ is the zero-set of a complex polynomial in $n$ variables, and hence it has real codimension $2$ and is a closed subset of $\mathbb{C}^n$.  By \cite{discriminantStratification}, the variety $V(\Delta)$ admits a Whitney stratification $D = D_2 \cup D_3 \cdots \cup D_n$ where each $D_j$ consists of those points $(a_0, \ldots, a_{n-1}) \in V(\Delta)$ where the highest multiplicity root of the corresponding polynomial $a_0 + a_1 z + \cdots + a_{n-1}z^{n-1} + z^n$ is $j$. Each $D_j$ is a real smooth submanifold of $\mathbb{C}^n$ which is a closed subset $V(\Delta)$ and has codimension at least that of $V(\Delta)$; hence each has real codimension at least $2$.

    Let $f : X \to \mathbb{C}^n$ be a continuous function, and let $\epsilon > 0$. Since $\dim X \leq 1$, $X$, it follows from \cite[Corollary 1]{mardesic} that $X$ is a double inverse limit of a doubly-indexed family of polyhedra, each of which have dimension at most one. Hence there exists a compact polyhedron $K$ with $\dim K \leq 1$, a map $q : X \to K$, and a continuous function $g : K \to \mathbb{C}^n$ such that $\norm{f - g \circ q} < \frac{\epsilon}{2}$. Since $D_2$ is a smooth submanifold of $\mathbb{C}^n$ with real codimension at least 2, we have
    \begin{align*}
        \dim_{\mathbb{R}} D_2 +
        \dim_{\mathbb{R}} K \leq 2n - 1 < 2n = \dim_{\mathbb{R}} \mathbb{C}^n,
    \end{align*}
    so a general position argument allows us to perturb $g$ to find some continuous function $g_2 : K \to \mathbb{C}^n \setminus D_2$ with $\norm{g - \iota_2 \circ g_2} < \frac{\epsilon}{2n}$ where $\iota_2 : \mathbb{C}^n \setminus D_2 \hookrightarrow \mathbb{C}^n$ is the open inclusion. Then since $\mathbb{C}^n \setminus D_2$ is an open subset of $\mathbb{C}^n$, it is in particular a smooth manifold with
    \begin{align*}
        \dim_{\mathbb{R}} D_3 + \dim_{\mathbb{R}} K \leq 2n - 1 < 2n = \dim_{\mathbb{R}} (\mathbb{C}^n \setminus D_2),
    \end{align*}
    so a general position argument allows us to perturb $g_2$ to find a continuous function $g_3 : K \to (\mathbb{C}^n \setminus D_2) \setminus D_3$ with $\norm{g_2 - \iota_{3} \circ g_3} < \frac{\epsilon}{2n}$ where $\iota_3 : (\mathbb{C}^n \setminus D_2) \setminus D_3 \hookrightarrow \mathbb{C}^n \setminus D_2$ is the open inclusion. Continuing in this way, we obtain functions $g_2, g_3, \ldots, g_n$ with $g_j : K \to \mathbb{C}^n \setminus(D_2 \cup \cdots \cup D_j)$ satisfying $\norm{g_j - \iota_{j+1} \circ g_{j+1}} < \frac{\epsilon}{2n}$ where $\iota_{j+1} : \mathbb{C}^n \setminus (D_2 \cup \cdots \cup D_{j+1}) \hookrightarrow \mathbb{C}^n \setminus (D_2 \cup \cdots \cup D_{j})$ is the open inclusion for each $j =2 ,3 \cdots, n-1$. Finally, setting $h : K \to \mathbb{C}^n$ to be the continuous function $h 
    := \iota_2 \circ \iota_3 \circ \cdots \circ \iota_n \circ g_n$, we have that the image of $h$ does not intersect $V(\Delta)$, and $\norm{h - g} \leq \frac{\epsilon}{2}$. Hence $\norm{h - f} < \epsilon$. 
\end{proof}

If we want to use this perturbation result to say something about approximate roots, we should also check that by perturbing the coefficients of a polynomial over $C(X)$ does not change its approximate roots. 

\begin{lem}\label{lem:closePolyMeansCloseRoots}
    Let $X$ be a compactum, and let $P(x,z) = z^n + a_{n-1}(x)z^{n-1} + \cdots + a_1(x)z + a_0(x)$ be a monic polynomial with coefficients in $C(X)$. Then there exists a constant $C > 0$ such that if $\epsilon \in (0,1]$ and if $Q(x,z) = z^n + b_{n-1}(x)z^{n-1} + \cdots + b_1(x)z + b_0(x)$ is a monic polynomial with coefficients in $C(X)$ such that $\norm{a_k - b_k}_{\infty} < \epsilon$, then every $\epsilon$-approximate solution of $Q$ is a $C\epsilon$-approximate solution of $P$. 
\end{lem}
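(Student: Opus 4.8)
The plan is to bound $P(x,f(x))$ pointwise in terms of $Q(x,f(x))$ together with the coefficient differences. First I would write, for each $x \in X$,
\[
P(x,f(x)) - Q(x,f(x)) = \sum_{k=0}^{n-1} \big(a_k(x) - b_k(x)\big) f(x)^k,
\]
and apply the triangle inequality to obtain
\[
\abs{P(x,f(x))} \leq \abs{Q(x,f(x))} + \sum_{k=0}^{n-1} \abs{a_k(x) - b_k(x)}\,\abs{f(x)}^k.
\]
Since $f$ is an $\epsilon$-approximate solution of $Q$, the first term is $< \epsilon$, and by hypothesis each $\abs{a_k(x) - b_k(x)} < \epsilon$, so the right-hand side is at most $\epsilon\big(1 + \sum_{k=0}^{n-1} \abs{f(x)}^k\big)$. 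Everything then reduces to bounding $\abs{f(x)}$ by a constant independent of $\epsilon$ and of the choice of $Q$.

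For this uniform bound I would set $M := \max_{0 \leq k < n}\norm{a_k}_\infty$ and use $\norm{a_k - b_k}_\infty < \epsilon \leq 1$ to get $\norm{b_k}_\infty \leq \norm{a_k}_\infty + 1 \leq M + 1$ for every $k$. Then Lemma \ref{lem:polyRootBound} applied to $Q$ gives $\norm{f}_\infty \leq 1 + \epsilon + (M+1) \leq M + 3$, where again I use $\epsilon \leq 1$. Writing $R := M + 3$, the key point is that this bound depends only on $P$, not on $\epsilon$ or $Q$.

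Finally, since $R \geq 1$ we have $\sum_{k=0}^{n-1}\abs{f(x)}^k \leq \sum_{k=0}^{n-1} R^k \leq n R^{n-1}$ for every $x$, so combining the estimates yields $\abs{P(x,f(x))} < \epsilon\,(1 + n R^{n-1})$ for all $x \in X$. Taking $C := 1 + n(M+3)^{n-1}$, which depends only on $P$, shows that $f$ is a $C\epsilon$-approximate solution of $P$. The main (and essentially only) subtlety is making the root bound uniform: Lemma \ref{lem:polyRootBound} produces a bound in terms of the coefficients of $Q$, and I expect the restriction $\epsilon \leq 1$ to be exactly what is needed to absorb the perturbed coefficients $b_k$ into a constant controlled by the $a_k$ alone.
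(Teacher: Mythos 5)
Your proof is correct and follows essentially the same route as the paper: split $P(x,f(x))$ as $\bigl(P-Q\bigr)(x,f(x)) + Q(x,f(x))$, bound $\norm{f}_\infty$ via Lemma \ref{lem:polyRootBound}, and absorb everything into a constant depending only on $P$. In fact you are slightly more careful than the paper at one point, since Lemma \ref{lem:polyRootBound} applied to $Q$ gives a bound in terms of the coefficients of $Q$, and you explicitly use $\epsilon \leq 1$ to convert $\max_k\norm{b_k}_\infty$ into $M+1$ before concluding $\norm{f}_\infty \leq M+3$; the resulting constant differs from the paper's but that is immaterial to the statement.
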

\begin{proof}
    Let $M = \max\{ \norm{a_0}, \norm{a_1}, \ldots, \norm{a_{n-1}} \}$ and let $C =  \frac{(2+M)^n - 1}{1+M} + 1$. Take a monic polynomial $Q(x,z) = z^n + b_{n-1}(x)z^{n-1} + \cdots + b_1(x)z + b_0(x)$ with coefficients in $C(X)$ such that $\norm{a_k - b_k}_{\infty} < \epsilon$. Let $f \in C(X)$ be an $\epsilon$-approximate solution of $Q$. Then by Lemma \ref{lem:polyRootBound}, $\norm{f} \leq 1 + \epsilon + M \leq 2 + M$, so for each $x \in X$, we have that
    \begin{align*}
		|P(x,f(x))| &\leq |P(x,f(x)) - Q(x,f(x))| + |Q(x,f(x))| \\
			&=|(a_{n-1}(x) - b_{n-1}(x))f(x)^{n-1} + \cdots + (a_1(x)-b_1(x))f(x) + (a_0(x)- b_0(x))| \\
                & \qquad + |Q(x,f(x))| \\
			&\leq \norm{a_{n-1} - b_{n-1}}\norm{f}^{n-1} + \cdots + \norm{a_1 - b_1}\norm{f} + \norm{a_0 - b_0} + \epsilon \\
			&< \epsilon(\norm{f}^{n-1} + \cdots + \norm{f} + 1) + \epsilon \\
			&\leq \epsilon\left( (2+M)^{n-1} + \cdots + (2+M) + 1 \right) + \epsilon \\
			&= \epsilon \left( \frac{(2+M)^n - 1}{(2+M) - 1} + 1 \right) \\
			&= C\epsilon
		\end{align*}
    Since the above holds for all $x \in X$, it follows that $\sup_{x \in X} |P(x,f(x))| < C\epsilon$, as desired.
\end{proof}

We are now ready to prove the main result of this section, Theorem \ref{thm:perturbAndStabDegN}.

\begin{proof}[Proof of Theorem \ref{thm:perturbAndStabDegN}]
    Clearly (1) implies (2), and (2) implies (3) by Theorem \ref{thm:noRepeatRootsStability}. Now assume that (3) holds, and take any monic polynomial $P$ of degree at most $n$ with coefficients in $C(X)$. By Lemma \ref{lem:closePolyMeansCloseRoots}, we can find a constant $C > 0$ such that if $\epsilon_0 \in (0,1)$ and if $Q$ is a monic polynomial with coefficients in $C(X)$ such that $\norm{P - Q} < \epsilon_0$, then every $\epsilon_0$-approximate solution of $Q$ is a $C\epsilon_0$-approximate solution of $P$. Now suppose that $\epsilon > 0$, and let $\epsilon_0 := \min(1, \frac{\epsilon}{C})$. Since $\dim X \leq 1$, by Theorem \ref{thm:noRepeatRootsDense}, we can find a monic polynomial $Q$ with coefficients in $C(X)$ having no repeated roots such that $\deg Q = \deg P \leq n$ and $\norm{P - Q} < \epsilon_0$. By assumption, $Q$ has an exact root, say $f \in C(X)$. Then in particular $f$ is an $\epsilon_0$-approximate root of $Q$, so it follows by our choice of $C$ that $f$ is a $C\epsilon_0$-approximate solution of $P$. But $\epsilon \geq C \epsilon_0$, so it follows that $f$ is an $\epsilon$-approximate solution of $P$. This proves that (1) holds, as desired. 
\end{proof}

Now Lemma 2.4 in \cite{KawamuraMiura2009} and the discussion thereafter show that if every monic polynomial over $C(X)$ of degree at most $n$ with no repeated roots has an exact root, then any monic polynomial $P$ over $C(X)$ with degree at most $n$ can be factored completely, that is, we can find continuous functions $\lambda_1, \ldots, \lambda_n \in C(X)$ such that $P(x,z) = \prod_{j=1}^{n}(z-\lambda_j(x))$. Thus as a consequence of Theorem \ref{thm:perturbAndStabDegN} we obtain:
\begin{cor}\label{cor:completelySolvableDegN}
    Let $X$ be a compactum with $\dim X \leq 1$ and let $n$ be a positive integer. The following are equivalent: 
    \begin{enumerate}
        \item Every monic polynomial of degree at most $n$ with coefficients in $C(X)$ has approximate roots;
        \item Every monic polynomial of degree at most $n$ with coefficients in $C(X)$ with no repeated roots has approximate roots;
        \item Every monic polynomial of degree at most $n$ with coefficients in $C(X)$ with no repeated roots can be factored completely.
    \end{enumerate}
\end{cor}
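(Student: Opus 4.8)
The plan is to prove the chain of implications $(1) \Rightarrow (2) \Rightarrow (3) \Rightarrow (1)$, importing the hard analytic content from Theorem \ref{thm:perturbAndStabDegN} and the cited factorization result of Kawamura and Miura, so that the corollary itself reduces to bookkeeping over degrees. The implication $(1)\Rightarrow(2)$ is immediate: the monic polynomials of degree at most $n$ with no repeated roots form a subclass of all monic polynomials of degree at most $n$, so if every polynomial in the larger class has approximate roots, then so does every polynomial in the smaller one.

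For $(2)\Rightarrow(3)$, I would proceed in two steps. First, observe that each condition in the corollary is the conjunction, over $k \in \{1, \dots, n\}$, of the corresponding condition in Theorem \ref{thm:perturbAndStabDegN} for degree exactly $k$, since a monic polynomial of degree at most $n$ is a monic polynomial of some degree $k \leq n$. Thus condition $(2)$ of the corollary is equivalent to asserting statement $(2)$ of Theorem \ref{thm:perturbAndStabDegN} for every $k \leq n$, and by the theorem this is in turn equivalent to statement $(3)$ of the theorem for every such $k$, namely that \emph{every monic polynomial of degree at most $n$ with no repeated roots has an exact root}. Second, I would feed this last statement into Lemma 2.4 of \cite{KawamuraMiura2009} (and the discussion following it), which is exactly the assertion that root-existence for all no-repeated-roots polynomials of degree at most $n$ upgrades to complete factorization of \emph{arbitrary} monic polynomials of degree at most $n$. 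This yields $(3)$.

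The implication $(3)\Rightarrow(1)$ closes the loop and is the easiest: if a monic polynomial $P$ of degree $k \leq n$ factors as $P(x,z) = \prod_{j=1}^{k}(z - \rho_j(x))$ with $\rho_j \in C(X)$, then $\rho_1$ is an exact root of $P$, and an exact root is trivially an $\epsilon$-approximate root for every $\epsilon > 0$. Hence every monic polynomial of degree at most $n$ has approximate roots, which is $(1)$.

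The main obstacle is entirely contained in $(2)\Rightarrow(3)$, and more precisely in the passage from root-existence for no-repeated-roots polynomials to complete factorization of polynomials that \emph{may} have repeated roots. For no-repeated-roots polynomials this passage is a clean induction on the degree: given an exact root $f$ of such a $P$, the quotient $P/(z-f)$ is again monic, of degree one lower, and still has no repeated roots pointwise (dividing out one of finitely many distinct roots leaves the remaining distinct roots intact), so one peels off linear factors one at a time — which is exactly why the hypothesis is needed at \emph{every} intermediate degree $k \leq n$ and justifies the ``degree at most $n$'' formulation. The genuinely delicate point, where I would rely on the cited Kawamura--Miura argument rather than reprove it, is that a polynomial with repeated roots cannot be treated by naive division (the quotient need not remain in the no-repeated-roots class); one instead perturbs into the good class via Theorem \ref{thm:noRepeatRootsDense} and controls the resulting complete factorizations in the limit.
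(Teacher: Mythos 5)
Your overall route is the same as the paper's: reduce each condition to the corresponding condition of Theorem \ref{thm:perturbAndStabDegN} degree by degree, and use Lemma 2.4 of \cite{KawamuraMiura2009} to pass from exact roots of no-repeated-roots polynomials to complete factorization. The implications $(1)\Rightarrow(2)$ and $(2)\Rightarrow(3)$ are handled correctly.

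The one flaw is your closing implication $(3)\Rightarrow(1)$, which you describe as trivial but which, as written, does not follow. Hypothesis $(3)$ only guarantees a factorization $P(x,z)=\prod_j(z-\rho_j(x))$ for polynomials \emph{with no repeated roots}; extracting $\rho_1$ therefore gives exact (hence approximate) roots only for that subclass, i.e.~it gives condition $(3)$ of Theorem \ref{thm:perturbAndStabDegN} for each $k\le n$, not condition $(1)$ of the corollary. The sentence ``Hence every monic polynomial of degree at most $n$ has approximate roots'' is a non sequitur unless you route back through the implication $(3)\Rightarrow(1)$ of Theorem \ref{thm:perturbAndStabDegN} --- that is, through the perturbation machinery (Theorem \ref{thm:noRepeatRootsDense} and Lemma \ref{lem:closePolyMeansCloseRoots}), which is precisely where the hypothesis $\dim X\le 1$ enters. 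Your own first paragraph of $(2)\Rightarrow(3)$ already records the needed equivalence, so the repair is one sentence; but the step is not ``the easiest'' in the sense you claim, and labelling it so obscures where the dimension hypothesis is actually used. A minor further point: since condition $(3)$ of the corollary concerns only no-repeated-roots polynomials, the part of the Kawamura--Miura argument you single out as delicate (factoring polynomials that may have repeated roots) is not needed here; the clean peeling-off induction you describe already suffices for $(2)\Rightarrow(3)$.
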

In the language of \cite{GorinLin}, the last condition in the corollary above says that the class of equations $\overline{\mathfrak{A}_n}(X)$ consisting of monic polynomials over $C(X)$ of degree at most $n$ with no repeated roots is \emph{completely solvable}.

\section{Shape theory}\label{sec:proCatShape}

\subsection{Pro-categories}

To generalize the global arguments of homotopy theory to spaces that do not have nice local properties (such as local path-connectedness), we need \emph{shape theory}. In this section, following \cite{dydak_segal}, we recall some elements of shape theory and the language in which it is formulated, namely that of pro-categories.

\begin{defn}\label{defn:proCat}
    Given a category $\cC$, we define its corresponding \emph{pro-category} $\Pro(\cC)$ to be the category consisting of the following objects, morphisms, and composition law:
    \begin{enumerate}[(i)]
        \item The objects of $\Pro(\cC)$ are given by inverse systems $(X_\alpha, p_{\alpha}^{\alpha'}, A)$, which are functors from a directed set $A$ to $\cC$. Explicitly, for each $\alpha \in A$ we have an object $X_\alpha$ of $\cC$, and for each pair $\alpha' \geq \alpha$ we have a morphism $p_{\alpha}^{\alpha'}: X_{\alpha'} \to X_\alpha$.
        \item The set of morphisms in $\Pro(\cC)$ between two objects $\ul{X} = (X_\alpha, p_{\alpha}^{\alpha'}, A)$ and $\ul{Y} = (Y_\beta, q_{\beta}^{\beta'}, B)$ is given by
        \[\Hom_{\Pro(\cC)}(\ul{X}, \ul{Y}) = \varprojlim_{\beta \in B} \varinjlim_{\alpha \in A} \Hom_{\cC}(X_\alpha, Y_\beta)\]
    \end{enumerate}

    Before defining the composition of general morphisms in $\Pro(\cC)$, it is worth noting a few remarks concerning the above definitions. To start, we have an inclusion of objects $\iota: \cC \to \Pro(\cC)$ by using indexing sets of a single element
    \[X \mapsto (X, \id_X, *)\]
    and we'd like this to be a fully faithful inclusion (the only thing left to check is that $\iota$ preserves composition of morphisms, and hence is a functor). Under this inclusion functor $\iota$ the image of the inverse system $(X_\alpha, p_{\alpha}^{\alpha'}, A)$ in $\cC$ is an inverse system $(\iota(X_\alpha), \iota(p_{\alpha}^{\alpha'}), A)$ in $\Pro(\cC)$, and the main idea in the definition of $\Pro(\cC)$ is that this inverse system $(\iota(X_\alpha), \iota(p_{\alpha}^{\alpha'}), A)$ has a natural projective limit in $\Pro(\cC)$ given precisely by the object $\ul{X} = (X_\alpha, p_{\alpha}^{\alpha'}, A)$. This perspective lets us understand morphisms into an object $\ul{Y} = (Y_\beta, q_{\beta}^{\beta'}, B)$ of $\Pro(\cC)$ by understanding a system of morphisms into objects $\iota(Y_\beta)$ of $\iota(\cC) \cong \cC$:
    \begin{align*}
        \Hom_{\Pro(\cC)}(\ul{X}, \ul{Y}) & = \Hom_{\Pro(\cC)}(\ul{X}, \varprojlim_{\beta \in B} \iota(Y_\beta)) \\
        & = \varprojlim_{\beta \in B} \Hom_{\Pro(\cC)}(\ul{X}, \iota(Y_\beta))
    \end{align*}
    using the assertion the composition law will be such that $\ul{Y}$ is a projective limit in $\Pro(\cC)$ for the inverse system $(\iota(Y_\beta), \iota(q_{\beta}^{\beta'}), B)$.
    
    Unlike for the second entry, we cannot in general pull the projective limit out of the first entry. Thus the following identity
    \[\Hom_{\Pro(\cC)}(\ul{X}, \iota(Y)) = \varinjlim_{\alpha \in A} \Hom_{\cC}(X_\alpha, Y)\]
    is really a definition. Note that an element $f$ of this direct limit is an equivalence class of morphisms, with representatives given by morphisms $f_\alpha: X_\alpha \to Y$ for some $\alpha \in A$; two representatives $f_\alpha \in \Hom_{\cC}(X_\alpha, Y)$ and $f_{\alpha'} \in \Hom_{\cC}(X_{\alpha'}, Y)$ represent the same class if there exists some $\alpha'' \in A$ with $\alpha'' \geq \alpha$, and $\alpha'' \geq \alpha'$, and
    \[f_\alpha \circ p_{\alpha}^{\alpha''} = f_{\alpha'} \circ p_{\alpha'}^{\alpha''}\]
    With this background we are ready for the definition of the composition law in $\Pro(\cC)$:
    \begin{enumerate}[(i)]
        \item[(iii)] Given three objects $\ul{X} = (X_\alpha, p_{\alpha}^{\alpha'}, A)$, $\ul{Y} = (Y_\beta, q_{\beta}^{\beta'}, B)$, and $\ul{Z} = (Z_\gamma, r_{\gamma}^{\gamma'}, C)$ the composition of morphisms
        \begin{align*}
        \Hom_{\Pro(\cC)}(\ul{X}, \ul{Y}) \times \Hom_{\Pro(\cC)}(\ul{Y}, \ul{Z}) & \longrightarrow \Hom_{\Pro(\cC)}(\ul{X}, \ul{Z}) \\
        (f, g) & \longmapsto g \circ f
        \end{align*}
        is described by giving a representative of $(g \circ f)_\gamma : \ul{X} \to Z_\gamma$ for each $\gamma \in C$. We start by picking a representative $g_\gamma^\beta : Y_\beta \to Z_\gamma$ for $g_\gamma : \ul{Y} \to Z_\gamma$, along with a representative $f_\beta^\alpha : X_\alpha \to Y_\beta$ for $f_\beta : \ul{X} \to Y_\beta$. Then a representative for $(g \circ f)_\gamma : \ul{X} \to Z_\gamma$ is given by $g_\gamma^\beta \circ f_\beta^\alpha: X_\alpha \to Z_\gamma$. It is an exercise to show that this definition of $(g \circ f)_\gamma$ is independent of the choices of representatives made for $g_\gamma$ and $f_\beta$, and that the $(g \circ f)_\gamma$ respect the structure maps $r_{\gamma}^{\gamma'}$ of $\ul{Z}$.
    \end{enumerate}
\end{defn}

Henceforth we will implicitly use that $\cC$ includes fully faithfully into $\Pro(\cC)$, and stop using $\iota$ to explicitly notate this inclusion functor.

\begin{defn}\label{rem:proFunctor}
We have a functor $\Pro$ from categories to categories which takes a category $\cC$ to its pro-category $\Pro(\cC)$, and takes a functor $\cF: \cC \to \cD$ to the functor $\Pro(\cF)$ that applies $\cF$ to all objects (and structure maps of the inverse systems), and also applies $\cF$ to morphisms (by doing so on representatives). In particular, a subcategory $\cD \seq \cC$ gives rise to a subcategory $\Pro(\cD) \seq \Pro(\cC)$.
\end{defn}

While part of the definition of morphisms in $\Pro(\cC)$ requires inductive limits of morphisms in $\cC$, and hence require choices of representatives for certain definitions (such as the composition law) and proofs, our aim should be to state our results without making these choices. In light of this, our definitions and results should be up to isomorphism in $\Pro(\cC)$, so let us take the time now to understand what this means. \par

First note that for each object $\ul{X} = (X_\alpha, p_{\alpha}^{\alpha'}, A)$ of $\Pro(\cC)$ we may define a map $p_\alpha : \ul{X} \to X_\alpha$ by using the identity map $\id_{X_\alpha} = p_\alpha^\alpha: X_\alpha \to X_\alpha$ as a representative (in which case, for all $\alpha' \geq \alpha$ the structure maps $p_\alpha^{\alpha'}$ are also representatives). Then this system of morphisms $p_\alpha : \ul{X} \to X_\alpha$ assembles into a map $\id_{\ul{X}} : \ul{X} \to \ul{X}$ which we immediately see is an identity for $\ul{X}$ using the above composition law. \par
The following proposition gives us a characterization of (left and right) inverses, which we'll use later to prove some properties are invariant under isomorphisms in $\Pro(\cC)$. The proposition follows immediately from this characterization of $\id_{\ul{X}}$ and the composition law.

\begin{prop}\label{prop:inverseProcategory}
   Consider two objects $\ul{X} = (X_\alpha, p_{\alpha}^{\alpha'}, A)$ and $\ul{Y} = (Y_\beta, q_{\beta}^{\beta'}, B)$ of $\Pro(\cC)$, along with morphisms $\ul{f}: \ul{X} \to \ul{Y}$ and $\ul{g}: \ul{Y} \to \ul{X}$. Then $\ul{g} \circ \ul{f} = \id_{\ul{X}}$ if and only if for any $\alpha \in A$ and any $\beta \in B$ such that there exists a representative $g_\alpha^\beta : Y_\beta \to X_\alpha$ of $g_\alpha : \ul{Y} \to X_\alpha$, there exists an $\alpha' \in A$ such that there exists a representative $f_\beta^{\alpha'} : X_{\alpha'} \to Y_\beta$ for $f_\beta : \ul{X} \to Y_\beta$ such that $g_\alpha^\beta \circ f_\beta^{\alpha'} = p_\alpha^{\alpha'}$.
\end{prop}

\subsection{Pro-groups}\label{subsec:pro-groups}

One of the main categories to which we'll apply this $\Pro$ functor is the category of groups $\Grp$. In this section we will give some examples and properties of pro-groups that we will use later on. 

\begin{examples}\label{ex:nestedSubgroupsInverseSystem}
Given a sequence $G_1 \supseteq G_2\supseteq G_3 \supseteq \cdots$ of nested subgroups there is a pro-group $\ul{G}$ given by the inverse system of inclusions
\[\begin{tikzcd}
	G_1 & G_2 & G_3 & \cdots
	\arrow[from=1-2, to=1-1]
	\arrow[from=1-3, to=1-2]
	\arrow[from=1-4, to=1-3]
\end{tikzcd}\]
of subsequent subgroups. To be explicit, the indexing set here is $\bN_{\geq 1}$, the objects are the subgroups $G_k$ and the structure maps $p_k^{k'}$ are the inclusions $G_{k'} \to G_k$. A common example is to take a sequence of positive integers $\nu = (n_1, n_2, n_3, \dots)$ such that $n_k|n_{k + 1}$ for each $k$, and then consider the inverse system
\[\begin{tikzcd}
	n_1 \bZ & n_2 \bZ & n_3 \bZ & \cdots
	\arrow[from=1-2, to=1-1]
	\arrow[from=1-3, to=1-2]
	\arrow[from=1-4, to=1-3]
\end{tikzcd}\]
of subgroups of $\bZ$. We'll use $\ul{\nu\bZ}$ to denote the pro-group defined by this inverse system.
\end{examples}

This example brings us to one of the first properties we can define about pro-groups. For this, we will need to recall the following property of groups.

\begin{defn}\label{defn:divisibility_group}
    A group $G$ is called \emph{$m$-divisible} if for every $g \in G$ there exists some $h \in G$ such that $h^m = g$. We shall call $G$ \emph{divisible} if it is $m$-divisible for every integer $m \geq 1$.
\end{defn}

\begin{defn}\label{defn:divisibility_progroup}
    An inverse system of groups $(G_\alpha, p_\alpha^{\alpha'}, A)$ is called \emph{$m$-divisible} if for any $\alpha \in A$ there exists an $\alpha' \geq \alpha$ so that for any $g \in G_{\alpha'}$ there exists an element $x \in G_\alpha$ so that $p_\alpha^{\alpha'}(g) = x^m$. The following proposition shows being $m$-divisible is an isomorphism invariant for inverse systems of groups in $\Pro(\Grp)$. Thus we can call a pro-group $\ul{G}$ \emph{$m$-divisible} if one (and therefore all) inverse systems isomorphic to it are $m$-divisible.
\end{defn}

\begin{prop}\label{prop:divisibility_progroup_iso_invariant}
    Consider two inverse systems of groups $\ul{G} = (G_\alpha, p_\alpha^{\alpha'}, A)$ and $\ul{H} = (H_\beta, q_\beta^{\beta'}, B)$ along with a morphism $\ul{\phi}: \ul{H} \to \ul{G}$ in $\Pro(\Grp)$ which has a right inverse. If $\ul{H}$ is $m$-divisible, then $\ul{G}$ is also $m$-divisible.
\end{prop}
\begin{proof}
    By assumption there exists a morphism $\ul{\psi}: \ul{G} \to \ul{H}$ such that $\ul{\phi} \circ \ul{\psi} = \id_{\ul{G}}$. Now given some $\alpha \in A$, pick some $\beta \in B$ such that there exists a representative $\phi_\alpha^\beta: H_\beta \to G_\alpha$ for $\phi_\alpha: \ul{H} \to G_\alpha$. Since $\ul{H}$ is $m$-divisible, we can find some $\beta' \in B$ such that the image of elements under $q_\beta^{\beta'}$ are $m$th powers of elements in $H_\beta$. \par
    By Proposition \ref{prop:inverseProcategory}, since $\phi_\alpha^{\beta'} = \phi_\alpha^\beta \circ q_\beta^{\beta'}$ is a representative for $\phi_\alpha : \ul{H} \to G_\alpha$, we know that there exists some $\alpha' \in A$ along with a representative $\psi_{\beta'}^{\alpha'}: G_{\alpha'} \to H_{\beta'}$ for $\psi_{\beta'}: \ul{G} \to H_{\beta'}$ such that $\phi_\alpha^{\beta'} \circ \psi_{\beta'}^{\alpha'} = p_\alpha^{\alpha'}$. \par
    We are now ready to prove that the images of elements under $p_\alpha^{\alpha'}$ are $m^{\text{th}}$ powers of elements in $G_\alpha$. Given some $g \in G_{\alpha'}$ we know that there exists some $y \in H_\beta$ such that $q_\beta^{\beta'}(\psi_{\beta'}^{\alpha'}(g)) = y^m$. Then defining $x = \phi_\alpha^\beta(y) \in G_\alpha$ we find
    \[p_\alpha^{\alpha'}(g) = (\phi_\alpha^{\beta'} \circ \psi_{\beta'}^{\alpha'})(g) = (\phi_\alpha^\beta \circ q_\beta^{\beta'} \circ \psi_{\beta'}^{\alpha'})(g) = \phi_\alpha^\beta(y^m) = x^m\]
    Since $\alpha \in A$ and $g \in G_{\alpha'}$ were arbitrary, this shows $\ul{G}$ is $m$-divisible.
\end{proof}

Consider for example the trivial pro-group $\ul{1} = (1, \id, *)$, given by taking the trivial group $1$ in $\Grp$ and passing it through the inclusion functor $\Grp \to \Pro(\Grp)$. In this way we regard $\ul{1}$ as an inverse system indexed by a single element, consisting just of the trivial group $1$. From this description it is immediate that $\ul{1}$ is $m$-divisible for all positive integers $m$. By Proposition \ref{prop:divisibility_progroup_iso_invariant} any inverse system isomorphic to the trivial pro-group $\ul{1}$ in $\Pro(\Grp)$ is also $m$-divisible for all positive integers $m$. We can get examples of non-trivial pro-groups being $m$-divisible via the next proposition. 

\begin{prop}\label{prop:subgroupsZdivisibility}
    Consider a sequence of natural numbers $\nu = (n_1, n_2, n_3, \dots)$ such that $n_k|n_{k + 1}$ for each $k$. The pro-group $\ul{\nu\bZ}$ defined as in Example \ref{ex:nestedSubgroupsInverseSystem} is $m$-divisible if and only if for each $a \in \bN$ there exists some $j \in \bN_{\geq 1}$ such that $m^a|n_j$.
\end{prop}
\begin{proof}
    To start assume that for each $a \in \bN$ there exists some $j \in \bN_{\geq 1}$ such that $m^a|n_j$. To show that $\ul{\nu\bZ}$ is $m$-divisible we need start by picking some index $k \in \bN_{\geq 1}$. Let $b \in \bN$ be the exponent such that $n_k = qm^b$ with $q \in \bN$ relatively prime to $m$. By our assumption we may find some $j \in \bN_{\geq 1}$ such that $m^{b+1}|n_j$. Since $n_j$ has at least one more factor of $m$, it must be that $k \leq j$ and $n_k|n_j$ (as otherwise $j < k$ and $n_j|n_k$ which is impossible). Since $n_k|n_j$ we get $q|n_j$, and along with $m^{b+1}|n_j$ we get that $qm^{m + 1}|n_j$ as they are relatively prime. In particular
    \[n_j\bZ \seq qm^{m + 1} \bZ = m n_k \bZ\]
    which tells us that every element of $n_j\bZ$ is an $m$th multiple of an element in $n_k \bZ$. Since $p_k^j: n_j\bZ \to n_k\bZ$ is just the inclusion map, this is exactly what we needed to show. \par
    
    Now assume that $\ul{\nu\bZ}$ is $m$-divisible. We prove the statement
    \begin{center}
    ``For each $a \in \bN$ there exists some $j \in \bN_{\geq 1}$ such that $m^a|n_j$.''
    \end{center}
    by induction on $a$. For the base case $a = 0$ we can pick $j = 1$ (or any other $j$). For the inductive step assume that the statement holds for $a$, meaning that there is some $k \in \bN_{\geq 1}$ such that $m^a|n_k$. By assumption of $m$-divisibility of $\ul{\nu\bZ}$ there exists some $j \geq k$ such that the image of elements in $n_j\bZ$ under $p_k^j: n_j\bZ \to n_k\bZ$ are $m$th multiples. In particular picking $n_j \in n_j\bZ$ there is an element $x \in n_k\bZ$ such that $n_j = p_k^j(n_j) = m \cdot x$ (note that $p_k^j$ is just inclusion). Since $x \in n_k\bZ$ and $m^a|n_k$ we have that $m^a|x$ and hence $n_j = m \cdot x$ is divisible by $m^{a+1}$. This completes the inductive step.
\end{proof}

\begin{examples}\label{ex:divisibleProgroups}
Consider $\nu = (2, 2^2, 2^3, \dots)$ the sequence of integers $n_k = 2^k$ and the associated pro-group $\ul{\nu\bZ}$. By Proposition \ref{prop:subgroupsZdivisibility} we see that $\ul{\nu\bZ}$ is $2$-divisible but it is not $p$-divisible for any other prime number $p$. In particular $\ul{\nu\bZ}$ not being $m$-divisible for some $m$ means $\ul{\nu\bZ}$ is not isomorphic to the trivial pro-group.
\end{examples}

This example highlights that we can construct pro-groups with certain properties (for example $2$-divisibility) without them being the trivial group. However, if instead of $\nu = (2, 2^2, 2^3, \dots)$ we used some $\nu = (n_1, n_2, n_3, \dots)$ whose terms were for each prime $p$ eventually arbitrarily $p$-divisible, we would not be able to argue that $\ul{\nu\bZ}$ is non-trivial in the same way. To answer this question and others down the line we can apply the following proposition.

\begin{prop}\label{prop:trivial_progroup_nested_subgroups}
    Let $\ul{G} = (G_\alpha, p_\alpha^{\alpha'}, A)$ be an inverse system of groups. Then $\ul{G}$ is isomorphic to the trivial pro-group $\ul{1}$ if and only if for each $\alpha \in A$ there exists some $\alpha' \geq \alpha$ such that $p_\alpha^{\alpha'}$ is the trivial morphism. \par
    In particular if $\ul{G}$ is an inverse system indexed by $\bN_{\geq 1}$ given by inclusions of nested subgroups $G_k \supseteq G_{k + 1}$ as in Example \ref{ex:nestedSubgroupsInverseSystem}, then $\ul{G}$ is isomorphic to $\ul{1}$ if and only if the subgroups $G_k$ are eventually the trivial subgroup.
\end{prop}
\begin{proof}
    Assume that $\ul{G} \cong \ul{1}$, and let $\alpha  \in A$. Then we can find morphisms $\ul{f} : \ul{G} \to \ul{1}$ and $g : \ul{1} \to \ul{G}$ such that $\ul{g} \circ \ul{f} = \id_{\ul{G}}$, and by Proposition \ref{prop:inverseProcategory}, this means that there exists some $\alpha' \in A$ with $\alpha' \geq \alpha$ and maps $f^{\alpha'} : G_{\alpha'} \to 1$ and $g_{\alpha} : 1 \to G_{\alpha}$ such that $g_{\alpha} \circ f^{\alpha'} =p_{\alpha}^{\alpha'}$. But $g_{\alpha} \circ f^{\alpha'} : G_{\alpha'} \to G_{\alpha}$ must be the trivial map, since it factors through the trivial group. Hence $p_{\alpha}^{\alpha'}$ is the trivial map. \par
    Conversely, assume that for each $\alpha \in A$ there exists some $\alpha' \geq \alpha$ such that $p_{\alpha}^{\alpha'}$ is the trivial map. Then the collection of trivial morphisms $f^{\alpha} : G_{\alpha} \to 1$ define a morphism $\ul{f} : \ul{G} \to \ul{1}$ and similarly the trivial inclusions $g_{\alpha} : 1 \to G_{\alpha}$ assemble to define a morphism $\ul{g} : \ul{1} \to \ul{G}$. Obviously $\ul{f} \circ \ul{g}: \ul{1} \to \ul{1}$ is the identity, so we just need to check $\ul{g} \circ \ul{f} = \id_{\ul{G}}$. To do this, we can just use Proposition \ref{prop:inverseProcategory}. To this end, let $\alpha \in A$, and let $g_{\alpha} : 1 \to G_{\alpha}$. By our assumption, there is some $\alpha' \geq \alpha$ such that $p_{\alpha}^{\alpha'}$ is the trivial map, and hence $g_{\alpha} \circ f^{\alpha'} = p_{\alpha}^{\alpha'}$, since $g_{\alpha} \circ f^{\alpha'}$ factors through the trivial group. Hence by the criterion from Proposition \ref{prop:inverseProcategory}, $\ul{G} \cong \ul{1}$. \par
    The second part of the Proposition is an immediate application of the first part. 
\end{proof}

Typically, abelianization of a group is viewed as functor from the category of groups to the category of abelian groups. In order for us to stay within the category of groups and pro-groups, we will as a convention always post-compose this functor with the inclusion of abelian groups into groups. Thus for us, abelianization for groups is an endofunctor $\Ab : \Grp \to \Grp$. Simply by applying the functor $\Pro$, we obtain an endofunctor $\Pro(\Ab) : \Pro(\Grp) \to \Pro(\Grp)$, and we can make the following definition. 

\begin{defn}\label{defn:abelianization_progroup}
    Let $\ul{G}$ be a pro-group. The \emph{abelianization} of $\ul{G}$ is the pro-group $\Pro(\Ab)(\ul{G})$. We say that $\ul{G}$ is \emph{abelian} if $\Pro(\Ab)(\ul{G})$ is isomorphic to $\ul{G}$. 
\end{defn}

Later, we will want to understand when a pro-group is abelian. This leads to the following expected characterization of abelian pro-groups. 

\begin{prop}\label{prop:abelian_pro_groups-abelian_inverse_systems}
    Let $\ul{G} = (G_\alpha, p_\alpha^{\alpha'}, A)$ be a pro-group. Then $\ul{G}$ is abelian if and only if it is isomorphic to an inverse system of abelian groups. 
\end{prop}
\begin{proof}
    If $\ul{G}$ is abelian, then it is by definition isomorphic to $\Pro(\Ab)(\ul{G})$, which is an inverse system of abelian groups. On the other hand, assume that $\ul{G}$ is isomorphic to an inverse system $\ul{H} = (H_{\beta}, q_{\beta}^{\beta'}, B)$ where each $H_{\beta}$ is an abelian group. Then since each $H_{\beta}$ is an abelian group, each $H_{\beta}$ is naturally isomorphic to its abelianization $\Ab(H_{\beta})$. This implies that $\Pro(\Ab)(\ul{H})$ is isomorphic to $\ul{H}$, and hence we have that
    \begin{align*}
        \Pro(\Ab)(\ul{G}) \cong \Pro(\Ab)(\ul{H}) \cong \ul{H} \cong \ul{G}
    \end{align*}
    so $\ul{G}$ is abelian. 
\end{proof}

\subsection{Shape theory}

The goal of shape theory is to understand homotopy classes of maps from a general topological space into a CW complex. Let us denote the category of topological spaces with continuous maps as $\Spc$ and the full subcategory of CW complexes by $\CW \seq \Spc$. If we are only interested in maps up to homotopy, we can consider the category $\Ho\Spc$ whose objects are topological spaces and morphisms are continuous maps up to homotopy, and the analogous full subcategory $\Ho\CW \seq \Ho\Spc$ of CW complexes (and morphisms are still continuous maps up to homotopy equivalence). \par
Analogously we can define $\Spc_*$ and $\CW_* \seq \Spc_*$ to be the categories of pointed topological spaces and pointed CW complexes, both with morphisms given by basepoint preserving continuous maps. Then $\Ho\Spc_*$ and $\Ho\CW_* \seq \Ho\Spc_*$ are taken to have morphisms given by basepoint preserving continuous maps up to homotopies that fix the basepoint. As noted in 3.1.2 of \cite{dydak_segal} the treatment of pointed and unpointed spaces are analogous. Definitions can be made in the pointed case and after surpressing the basepoint we get definitions in the unpointed case. We shall follow this convention by making our definitions in the pointed case.

\begin{defn}\label{defn:shapeOfSpace}
    Let $(X, x_0)$ be a pointed topological space. Suppose we have an object $(\ul{W}, \ul{w}) = (W_\alpha, w_\alpha, p_{\alpha}^{\alpha'}, A) \in \Pro(\Ho\CW_*)$ along with a morphism $\ul{q}: (X, x_0) \to (\ul{W}, \ul{w})$ in $\Pro(\Ho\Spc_*)$, which is just a collection of maps $q_\alpha: (X, x_0) \to (W_\alpha, w_\alpha)$ respecting the structure maps $p_{\alpha}^{\alpha'}$. We say that $(\ul{W}, \ul{w}, \ul{q})$ \emph{represents the pointed shape of} $(X, x_0)$ if $\ul{q}$ is initial among maps from $(X, x_0)$ to objects of $\Pro(\Ho\CW_*)$, meaning for any other $(\ul{Z}, \ul{z}) \in \Pro(\Ho\CW_*)$ and morphism $\ul{f}: (X, x_0) \to (\ul{Z}, \ul{z})$ in $\Pro(\Ho\Spc_*)$ there is a unique morphism $\ul{g}: (\ul{W}, \ul{w}) \to (\ul{Z}, \ul{z})$ in $\Pro(\Ho\CW_*)$ making the diagram commute:
    \[\begin{tikzcd}
	(X, x_0) \\
	{(\ul{W}, \ul{w})} & {(\ul{Z}, \ul{z})}
	\arrow["{\ul{q}}"', from=1-1, to=2-1]
	\arrow["{\ul{f}}", from=1-1, to=2-2]
	\arrow["{\ul{g}}"', dashed, from=2-1, to=2-2]
    \end{tikzcd}\]
    This universal property guarantees that triples $(\ul{W}, \ul{w}, \ul{q})$ which represent the pointed shape of $X$ are well-defined up to natural isomorphism in $\Pro(\Ho\CW_*)$, and this isomorphism class is called the \emph{pointed shape of} $X$.
\end{defn}

\begin{rem}\label{rem:equivShapeDef}
    Equivalently, $(\ul{W}, \ul{w}, \ul{q})$ represents the shape of $X$ if and only if any morphism $f: (X, x_0) \to (K, k_0)$ with $(K, k_0)$ a pointed CW complex factors uniquely through $(\ul{W}, \ul{w})$ in $\Pro(\Ho\Spc_*)$. Indeed, since $\Ho\CW_* \seq \Pro(\Ho\CW_*)$ this is implied by the original definition, and conversely given a factorization along single CW complexes and $\ul{f}: (X, x_0) \to (\ul{Z}, \ul{z}) \in \Pro(\Ho\CW_*)$, for each $f_\beta: (X, x_0) \to (Z_\beta, z_\beta)$ we get a $g_\beta: (\ul{W}, \ul{w}) \to (Z_\beta, z_\beta)$ making the diagram commute
    \[\begin{tikzcd}
	X \\
	{(\ul{W}, \ul{w})} & {(Z_\beta, z_\beta)}
	\arrow["{\ul{q}}"', from=1-1, to=2-1]
	\arrow["{f_\beta}", from=1-1, to=2-2]
	\arrow["{g_\beta}"', dashed, from=2-1, to=2-2]
    \end{tikzcd}\]
    which by uniqueness give that the maps $g_\beta \in \Hom((\ul{W}, \ul{w}), (Z_\beta, z_\beta))$ assemble to a map $\ul{g} \in \Hom((\ul{W}, \ul{w}), (\ul{Z}, \ul{z}))$ with $\ul{f} = \ul{g} \circ \ul{q}$ as in the original definition.
\end{rem}

As seen in Remark \ref{rem:equivShapeDef}, the pointed shape of $(X, x_0)$ captures all of the data about mapping out of $(X, x_0)$ and into pointed CW complexes up to homotopy. One way to make this precise is to consider open coverings of $X$ and partitions of unity to construct $\text{\v{C}}(X, x_0)$, the \v{C}ech system of $(X, x_0)$ as in \cite{dydak_segal}. This $\text{\v{C}}(X, x_0)$ is an inverse system in $\Pro(\Ho\CW_*)$, which also has a morphism $\ul{q}_X: X \to \text{\v{C}}(X, x_0)$ in $\Pro(\Ho\Spc_*)$. Theorem 3.1.4 in \cite{dydak_segal} tells us that this pair $(\text{\v{C}}(X,x_0), \ul{q}_X)$ is a representative for the pointed shape of $(X, x_0)$. In the discussion following Corollary 3.1.6 in \cite{dydak_segal} we see that this assignment of \v{C}ech systems is functorial and hence we have functorial representatives for the pointed shape of pointed topological spaces, which we capture in the next lemma.

\begin{lem}\label{lem:shapeExistence}
    There is a functor $\Shape_{\text{\v{C}ech}}: \Spc_* \to \Pro(\Ho\CW_*)$ that to each pointed topological space $(X, x_0)$ assigns its \v{C}ech system $\text{\v{C}}(X,x_0)$, which is a representative for its pointed shape.
\end{lem}
\begin{proof}
    To make the assignment $(X, x_0) \mapsto \text{\v{C}}(X,x_0)$ is functorial we need to define the value of $\Shape_{\text{\v{C}ech}}$ on a morphism $f:(X, x_0) \to (Y, y_0)$ in $\Spc_*$. \\
    As noted above we have that $(\text{\v{C}}(X,x_0), \ul{q}_X)$ and $(\text{\v{C}}(Y,y_0), \ul{q}_Y)$ are representatives for the pointed shape of $(X, x_0)$ and $(Y, y_0)$ respectively. By considering $f:(X, x_0) \to (Y, y_0)$ up to pointed homotopy and then pushing it through the inclusion $\Ho\Spc_* \to \Pro(\Ho\Spc_*)$ we can make the composition $\ul{q}_Y \circ f$ seen in the diagram below.
    \[\begin{tikzcd}
	{(X, x_0)} & {(Y, y_0)} \\
	{\text{\v{C}}(X,x_0)} & {\text{\v{C}}(Y, y_0)}
	\arrow["f", from=1-1, to=1-2]
	\arrow["{\ul{q}_X}"', from=1-1, to=2-1]
	\arrow["{\ul{q}_Y}", from=1-2, to=2-2]
	\arrow["F"', dashed, from=2-1, to=2-2]
    \end{tikzcd}\]
    Applying the universal property of the shape representative $(X, x_0)$ we get a unique morphism $F: \text{\v{C}}(X,x_0) \to \text{\v{C}}(Y, y_0)$ in $\Pro(\Ho\CW_*)$ making the diagram commute. We use this $F$ as the definition for $\Shape_{\text{\v{C}ech}}(f)$. The fact that $\Shape_{\text{\v{C}ech}}$ preserves composition of morphisms follows by using uniqueness in the universal property of the shape representatives.
\end{proof}

If we want to consider the pointed shape of $(X, x_0)$ we should consider $\Shape_{\text{\v{C}ech}}(X, x_0) = \text{\v{C}}(X,x_0)$ up to isomorphism in $\Pro(\Ho\CW_*)$. The notion of isomorphism in $\Pro(\Ho\CW_*)$ can be understood more explicitly through Proposition \ref{prop:inverseProcategory}, though we shall mostly use the universal property of pointed shape representatives to pick alternative representatives $(\ul{W}, w_0, \ul{q})$. Indeed, we are free to change our representative for the shape of a space at will, as we shall see in the following remark.

\begin{rem}\label{rem:shapeCW}
    Consider the case of pointed CW complexes. It follows immediately from Remark \ref{rem:equivShapeDef} that each pointed CW complex $(K, k_0)$ thought of as an inverse system indexed by a single element along with the identity map $(K, k_0, \id_K)$ is a pointed shape representative for $(K, k_0)$. Thus natural functor $\CW_* \to \Ho\CW_* \to \Pro(\Ho\CW_*)$ which first takes morphisms up to homotopy and then includes $\Ho\CW$ into its pro-category, is naturally isomorphic to the restriction of the functor $\Shape_{\text{\v{C}ech}}: \Spc_* \to \Pro(\Ho\CW_*)$ to the subcategory $\CW_*$.
\end{rem}

In this paper we will require a way of computing the shape of a compactum, in order to perform calculations. This is provided to us via the following theorem (Theorem 4.1.5 in \cite{dydak_segal}):

\begin{thm}\label{thm:shapeCompacta}
    Consider an inverse system $(\ul{X}, \ul{x}) = (X_\alpha, x_\alpha, p_{\alpha}^{\alpha'}, A)$ where each $(X_\alpha, x_\alpha)$ is a finite pointed CW complex, and its limit
    \[(X, x_0) = \varprojlim_{\alpha \in A} (X_\alpha, x_\alpha).\]
    Then $(\ul{X}, \ul{x}, \ul{p})$ represents the pointed shape of $(X, x_0)$, where $\ul{p} = \{p_\alpha : X \to X_\alpha\}$ are the structure maps provided by the inverse limit.
\end{thm}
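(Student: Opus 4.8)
The plan is to verify the universal property of the shape directly, via the equivalent criterion of Remark \ref{rem:equivShapeDef}: I must show that for every CW complex $K$, every map $f \colon X \to K$ factors uniquely through the system $\ul{X}$ in $\Pro(\Ho\Spc)$. Since $\Hom_{\Pro(\Ho\CW)}(\ul{X}, K) = \varinjlim_\alpha [X_\alpha, K]$ (with $[-,-]$ denoting homotopy classes), this amounts to proving that the natural map
\[
\varinjlim_{\alpha \in A} [X_\alpha, K] \longrightarrow [X, K], \qquad [g] \longmapsto [g \circ p_\alpha]
\]
is a bijection; surjectivity gives existence of a factorization and injectivity gives its uniqueness as a morphism $\ul{X} \to K$, after which Remark \ref{rem:equivShapeDef} upgrades this to the full universal property. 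Throughout I use that $X = \varprojlim_\alpha X_\alpha$ is compact Hausdorff (an inverse limit of compact Hausdorff spaces) and that each finite CW complex $X_\alpha$ is a compact ANR.

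For surjectivity I would start with a map $f \colon X \to K$. As $X$ is compact, $f(X)$ lies in a finite subcomplex $L \seq K$, which is a compact ANR; fix an embedding $L \seq \bR^N$ together with a retraction $r \colon U \to L$ of an open neighbourhood $U$ of $L$. The key step is to push the components of $f \colon X \to \bR^N$ down to finite stages: the subalgebra $\bigcup_\alpha p_\alpha^{\ast} C(X_\alpha) \seq C(X)$ is directed (by directedness of $A$), contains the constants, and separates points of $X$ (distinct points differ in some coordinate $X_\alpha$), so by Stone--Weierstrass it is dense in $C(X)$. Approximating each of the $N$ coordinates and passing to a common index, I obtain $g \colon X_\alpha \to \bR^N$ with $g \circ p_\alpha$ uniformly close to $f$; then $g \circ p_\alpha(X) \seq U$, and the straight-line homotopy shows $f \simeq r \circ g \circ p_\alpha$.

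The delicate point, which I expect to be the main obstacle, is that $g$ need not land in $U$ on all of $X_\alpha$, only on the image $p_\alpha(X)$, so $r \circ g$ is not yet defined on $X_\alpha$. To remedy this I would invoke the standard inverse-limit fact that $p_\alpha(X) = \bigcap_{\alpha' \geq \alpha} p_\alpha^{\alpha'}(X_{\alpha'})$, a decreasing intersection of compacta; since $g^{-1}(U)$ is an open neighbourhood of $p_\alpha(X)$ in the compact space $X_\alpha$, a compactness argument yields some $\alpha' \geq \alpha$ with $p_\alpha^{\alpha'}(X_{\alpha'}) \seq g^{-1}(U)$. Then $r \circ g \circ p_\alpha^{\alpha'} \colon X_{\alpha'} \to K$ is a genuine map of CW complexes, and since $p_\alpha^{\alpha'} \circ p_{\alpha'} = p_\alpha$ it witnesses $f \simeq (r \circ g \circ p_\alpha^{\alpha'}) \circ p_{\alpha'}$, giving the desired factorization.

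For injectivity, suppose classes $[g], [g']$ with $g, g' \colon X_\alpha \to K$ (after replacing both by a common index) satisfy $g \circ p_\alpha \simeq g' \circ p_\alpha$ via a homotopy $H \colon X \times I \to K$. Since $X \times I = \varprojlim_\alpha (X_\alpha \times I)$ is again an inverse limit of finite CW complexes (with structure maps $p_\alpha^{\alpha'} \times \id_I$), I would rerun the factorization argument above relative to the endpoints $X \times \{0,1\}$, where $H$ already factors through stage $\alpha$; this produces some $\beta \geq \alpha$ and a homotopy $X_\beta \times I \to K$ from $g \circ p_\alpha^{\beta}$ to $g' \circ p_\alpha^{\beta}$, so $[g] = [g']$ in the direct limit. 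Finally, the pointed version follows by carrying a basepoint through every construction, choosing the embedding, retraction, and homotopies to be basepoint-preserving, which is routine once the unpointed case is in hand.
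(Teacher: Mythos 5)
The paper does not actually prove this statement: it is imported verbatim as Theorem 4.1.5 of Dydak--Segal, so there is no internal proof to compare against. Your argument is a correct, self-contained proof of that standard fact, and it follows the classical line: reduce via Remark \ref{rem:equivShapeDef} to showing that $\varinjlim_\alpha [X_\alpha, K] \to [X, K]$ is bijective for every CW complex $K$; push $f(X)$ into a finite subcomplex $L$, which is a compact ANR with a neighbourhood retraction $r \colon U \to L$ in $\bR^N$; use Stone--Weierstrass on the dense subalgebra $\bigcup_\alpha p_\alpha^{*}C(X_\alpha) \seq C(X)$ to approximate $f$ by a map factoring through a finite stage; and then use $p_\alpha(X) = \bigcap_{\alpha' \geq \alpha} p_\alpha^{\alpha'}(X_{\alpha'})$ together with compactness of the decreasing family $p_\alpha^{\alpha'}(X_{\alpha'})$ to force the approximant into $U$ on all of some $X_{\alpha'}$. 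Each of these steps is sound, and you correctly isolate the genuinely delicate point (a priori the approximant lands in $U$ only on $p_\alpha(X)$).

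The one place you are too brisk is injectivity. ``Rerunning the argument relative to the endpoints'' is not literally a rerun: the map $\tilde{G} \colon X_\beta \times I \to \bR^N$ produced by Stone--Weierstrass will not restrict to $g \circ p_\alpha^{\beta}$ and $g' \circ p_\alpha^{\beta}$ at $t = 0,1$; it is only uniformly close to them on $p_\beta(X) \times \{0,1\}$. One must pass to a further index $\beta' \geq \beta$ on which $\tilde{G}$ lands in $U$ \emph{and} stays $\epsilon$-close to the two endpoint maps, and then splice in two short straight-line homotopies before composing with $r$. Similarly, in the pointed case the Stone--Weierstrass approximant need not preserve the basepoint on the nose, and a small basepoint correction homotopy is needed. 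Both are standard fixes that do not threaten the approach, but they are the actual content of the ``relative'' step and should be written out.
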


We can combine this result with a theorem of Freudenthal \cite{Freudenthal1937} which says that we may write any metrizable compactum as an inverse limit of a sequence of polyhedra with piecewise-linear maps. \par

Finally we may use this framework to define some topological invariants.

\begin{defn}
    Given any functor which is a pointed homotopy invariant $\cF: \Ho\CW_* \to \Grp$, we can turn it into a shape invariant by defining:
    \begin{align*}
        \ul{\cF}: \Spc_* & \longrightarrow \Pro(\Grp) \\
        (X, x_0) & \longmapsto \Pro(\cF)(\Shape_{\text{\v{C}ech}}(X, x_0))
    \end{align*}
    Note that the shape $\Shape_{\text{\v{C}ech}}(X, x_0)$ lies in $\Pro(\Ho\CW_*)$, and then we may apply the pro-version of $\cF$, which is a functor $\Pro(\cF): \Pro(\Ho\CW_*) \to \Pro(\Grp)$. Analogously, if we have an unpointed homotopy invariant $\cF: \Ho\CW \to \Grp$ we can define the unpointed shape invariant $\ul{\cF}: \Spc \to \Pro(\Grp)$.\par
    In this fashion, we obtain homotopy pro-groups and (co)homology pro-groups $\ppi_n$, $\pH_n$, and $\pH^n$ by applying the above construction to the usual functors for homotopy groups $\pi_n$, homology groups $H_n$, and cohomology groups $H^n$ (note that $\pH^n$ actually takes values in $\Ind(\Grp) = \Pro(\Grp^{op})$). \par
    It is common to distill the invariants to get a group valued invariant. Note that $\pH_n(X)$ (resp. $\pH^n(X)$) consist of a projective (resp. inductive) system of groups, and hence we may take an inverse (resp. direct) limit to get a single group. In this way, we obtain the \v{C}ech homology and cohomology groups:
    \[\check{H}_n(X) = \varprojlim \pH_n(X) \qquad \check{H}^n(X) = \varinjlim \pH^n(X)\]
\end{defn}

\begin{rem}\label{rem:shapeInvCW}
    For a pointed CW complex $(K, k_0)$ we saw in Remark \ref{rem:shapeCW} that there is a natural isomorphism $\Shape_{\text{\v{C}ech}}(K, k_0) \cong (K, k_0)$ (thinking of $(K, k_0)$ as an inverse system indexed by a single element). Then for a pointed homotopy invariant $\cF: \Ho\CW_* \to \Grp$ we have
    \[\ul{\cF}(K, k_0) = \Pro(\cF)(\Shape_{\text{\v{C}ech}}(K, k_0)) \cong \cF(K, k_0)\]
    That is to say, the shape invariant $\ul{\cF}$ restricts to the original $\cF$ when applied to CW complexes (analogously for unpointed invariants). We'll most often use that $\ppi_1(K, k_0) \cong \pi_1(K, k_0)$.
\end{rem}

In our investigation of continua we'll see that the key invariant is the fundamental pro-group $\ppi_1(X, x_0)$, and we will mostly use the other invariants as a way to gain information about $\ppi_1(X, x_0)$ or answer more classical questions (usually posed in terms of $\check{H}^1(X)$). \par 

Later on we will exhibit examples of pointed continua $(X, x_0)$ whose $\ppi_1(X, x_0)$ satisfy properties of interest (for example allowing or prohibiting solutions to certain polynomials over $C(X)$). To do so we need to know given a pro-group $\ul{G}$ whether we may find a pointed continuum $(X, x_0)$ with $\ppi_1(X, x_0) \cong \ul{G}$. The following lemma suffices for our purposes.

\begin{lem}\label{lem:compactumWithGivenProgroup}
    Given a pro-group $\ul{G}$ isomorphic to an object lying in the subcategory $\Pro(\Grp^{\mathrm{fp}})$ of inverse systems of finitely presented groups, there exists a $2$-dimensional pointed continuum $(X, x_0)$ with the fundamental pro-group $\ppi_1(X, x_0) \cong \ul{G}$. Moreover, we may take $(X, x_0)$ to be $1$-dimensional if $\ul{G}$ is isomorphic to an object lying in the subcategory $\Pro(\Grp^{\mathrm{fg}, \mathrm{free}})$ of inverse systems of finitely generated free groups.
\end{lem}
\begin{proof}
Given any finitely presented group $G$ we can form a finite $2$-dimensional pointed CW complex $(X, x_0)$ with $\pi_1(X, x_0) \cong G$ by using a $1$-cell for each generator and a $2$-cell for each relation. If $G$ is a finitely generated free group, then $(X, x_0)$ can be taken to be $1$-dimensional (a wedge of finitely many circles). \par

Since the statement of the theorem is up to isomorphism, we shall assume that $\ul{G}$ is an inverse system lying in $\Pro(\Grp^{\mathrm{fp}})$. By the above, given this inverse system of finitely presented groups $\ul{G} = (G_\alpha, p_{\alpha}^{\alpha'}, A)$ for each $G_\alpha$ we can find a finite $2$-dimensional pointed CW complex $(X_\alpha, x_\alpha)$ whose fundamental group is $G_\alpha$, and for each $p_{\alpha}^{\alpha'}: G_{\alpha'} \to G_\alpha$ we can choose a pointed (cellular) map $\phi_{\alpha}^{\alpha'}: (X_{\alpha'}, x_{\alpha'})  \to (X_\alpha, x_\alpha)$ that induces $p_{\alpha}^{\alpha'}$ on fundamental groups (though this choice is not unique up to homotopy). Then by Theorem \ref{thm:shapeCompacta}, the inverse limit 
\[(X, x_0) = \varprojlim_{\alpha \in A} (X_\alpha, x_\alpha)\]
is a $2$-dimensional pointed continuum with $\ppi_1(X, x) \cong \ul{G}$. If $\ul{G}$ lies in the subcategory $\Pro(\Grp^{\mathrm{fg}, \mathrm{free}})$ then we could have picked each of the $(X_\alpha, x_\alpha)$ to be a wedge of circles, and hence the limit $(X, x_0)$ is also $1$-dimensional.
\end{proof}

\begin{rem}
    While the subcategories $\Grp^{\mathrm{fp}}$ and $\Grp^{\mathrm{fg}, \mathrm{free}}$ are closed under isomorphisms in $\Grp$, this is no longer the case for the subcategories $\Pro(\Grp^{\mathrm{fg}})$ and $\Pro(\Grp^{\mathrm{fg}, \mathrm{free}})$ of $\Pro(\Grp)$ (given simply by applying the $\Pro$ functor). In any case, we will be dealing with cases in which we have explicit inverse systems of finitely presented or finitely generated free groups, but this Lemma can be applied more broadly to inverse systems which are simply isomorphic to objects in $\Pro(\Grp^{\mathrm{fg}})$ and $\Pro(\Grp^{\mathrm{fg}, \mathrm{free}})$ inside of $\Pro(\Grp)$.
\end{rem}

\subsection{Eilenberg-MacLane spaces}

Recall that given $n \geq 1$ and a group $G$, which has to be abelian if $n \geq 2$, we can find pointed spaces $K(G, n)$, called an Eilenberg-MacLane spaces, whose homotopy groups vanish except for $\pi_n(K(G, n), x_0) = G$. Moreover this assignment taking in (discrete) groups and giving pointed spaces is functorial:
\begin{alignat*}{3}
    K(-, 1) & : \Grp && \longrightarrow \Ho\Spc_* \\
    K(-, n) & : \Ab && \longrightarrow \Ho\Spc_* \qquad \text{for} \ n \geq 2
\end{alignat*}
This is nicely described in \cite{McCord_classifying}. We can always take a model for $K(G, n)$ as a pointed CW complex up to homotopy equivalence. We will mostly be interested in $n = 1$.

\vspace{5mm}

One very useful property of Eilenberg-MacLane spaces is that we can completely characterize maps into them up to homotopy. We have the following correspondence: 

\begin{prop}\label{prop:CW maps into Eilenberg-MacLane}\cite[0.5.5]{BauesObstructionTheoryBook}
    Let $(X, x_0)$ be a pointed CW complex. Then the map
    \[\pi_1 : [(X, x_0), (K(G, 1), y_0)] \longrightarrow \Hom_{\Grp}(\pi_1(X, x_0), G)\]
    is a bijection. That is, pointed homotopy classes of maps into $K(G, 1)$ are fully determined by the induced map on fundamental groups.
\end{prop}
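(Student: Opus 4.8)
The plan is to prove this by obstruction theory, exploiting that $K(G,1)$ is aspherical: $\pi_m(K(G,1), y_0) = 0$ for all $m \neq 1$, while $\pi_1(K(G,1), y_0) = G$. Throughout write $Y = K(G,1)$. The displayed map is well-defined on pointed homotopy classes because homotopic pointed maps induce the same homomorphism on fundamental groups, so it remains to establish surjectivity and injectivity. We take $X$ to be connected (as the correspondence implicitly requires) and, replacing $X$ by a homotopy-equivalent complex, assume it has a single $0$-cell, namely $x_0$; neither reduction alters the two sides of the claimed bijection.

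For surjectivity, fix a homomorphism $\phi : \pi_1(X, x_0) \to G$. Since $X$ has one $0$-cell, its $1$-skeleton $X^{(1)}$ is a wedge of circles and $\pi_1(X^{(1)}, x_0)$ is free on the set of $1$-cells; a pointed map $X^{(1)} \to Y$ is determined up to homotopy rel $x_0$ by a choice of element of $G = \pi_1(Y, y_0)$ for each $1$-cell. Composing $\phi$ with the surjection $\pi_1(X^{(1)}) \to \pi_1(X)$ produces such a choice and hence a map $f^{(1)} : X^{(1)} \to Y$. Each $2$-cell is attached along a loop representing a relator $r$ of $\pi_1(X)$, and the obstruction to extending $f^{(1)}$ across this cell is the class of $f^{(1)}$ on the attaching loop, which lies in $\pi_1(Y, y_0) = G$ and equals the image of $r$ under $\pi_1(X^{(1)}) \to \pi_1(X) \xrightarrow{\phi} G$; this is trivial since $r$ dies in $\pi_1(X)$. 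Thus $f^{(1)}$ extends over the $2$-skeleton. For $n \geq 3$ the obstruction to extending across an $n$-cell $e^n \cong D^n$ is the class in $\pi_{n-1}(Y, y_0) = 0$ of the restriction to $\partial e^n \cong S^{n-1}$, so one extends inductively over all skeleta to obtain $f : X \to Y$, and by construction $\pi_1(f) = \phi$.

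For injectivity, suppose $f_0, f_1 : (X, x_0) \to (Y, y_0)$ induce the same homomorphism on $\pi_1$; I must build a pointed homotopy $H : X \times I \to Y$ from $f_0$ to $f_1$. This is the problem of extending the map already prescribed on $(X \times \{0,1\}) \cup (\{x_0\} \times I)$, which I attack over the skeleta of $X$: the product cells $e^n \times I$ have dimension $n+1$, so extending across one carries an obstruction in $\pi_n(Y, y_0)$. Over the single $0$-cell the homotopy is the constant path at $y_0$. Over each $1$-cell the obstruction lies in $\pi_1(Y, y_0) = G$ and is exactly the comparison between $[\,f_0 \circ \gamma\,]$ and $[\,f_1 \circ \gamma\,]$ for the corresponding loop $\gamma$; these coincide since $f_0$ and $f_1$ agree on $\pi_1$, so $H$ extends over $X^{(1)} \times I$. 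For $n \geq 2$ the obstruction sits in $\pi_n(Y, y_0) = 0$, so $H$ extends inductively over all of $X \times I$, giving $f_0 \simeq f_1$ rel $x_0$.

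I expect the heart of the argument to be the extension of $f^{(1)}$ across the $2$-cells in the surjectivity step: this is the one place where the hypothesis that $\phi$ is a \emph{homomorphism}, rather than an arbitrary assignment on generators, is genuinely used, through the identification of the primary obstruction with the image of a relator. Every other extension is automatic from the vanishing of the higher homotopy groups of $K(G,1)$, and the only bookkeeping is to confirm the standard fact that extending a map (resp.\ a homotopy) across a cell of dimension $n$ has obstruction in $\pi_{n-1}(Y)$ (resp.\ that a product cell of dimension $n+1$ yields an obstruction in $\pi_n(Y)$), both of which vanish in the relevant ranges.
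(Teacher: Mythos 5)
The paper does not prove this proposition at all --- it is quoted verbatim from Baues \cite[0.5.5]{BauesObstructionTheoryBook} --- so there is no internal argument to compare against. Your obstruction-theoretic proof is the standard one for this classical fact and is correct: the surjectivity step correctly isolates the extension over the $2$-cells as the only place where $\phi$ being a homomorphism (so that relators die in $G$) is used, and the injectivity step correctly places the only nontrivial obstruction for the homotopy over the $1$-cells of $X$, where it vanishes because $f_0$ and $f_1$ agree on $\pi_1$. One point worth making explicit: the proposition as stated omits the hypothesis that $X$ be connected, without which the statement is false (a component not containing $x_0$ contributes to the left-hand side but is invisible to $\pi_1(X,x_0)$); you correctly flag and impose this hypothesis, and it is satisfied in the paper's only application, where the $W_\alpha$ in the shape system are connected CW complexes.
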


If we use a CW complex to model the Eilenberg-MacLane space, then we can upgrade this correspondence to continua.

\begin{lem}\label{lem:maps into Eilenberg-MacLane}
    Let $(X, x_0)$ be a pointed continuum. If $(Y, y_0)$ is a CW model of an Eilenberg-MacLane space $K(G, 1)$, then the map
    \[\ppi_1 : [(X, x_0), (Y, y_0)] \longrightarrow \Hom_{\Pro\Grp}(\ppi_1(X, x_0), G)\]
    is a bijection. That is, pointed homotopy classes of maps into $Y$ are fully determined by the induced map on fundamental pro-groups.
\end{lem}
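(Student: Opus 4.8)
The plan is to reduce the statement about continua to the CW-complex case already handled in Proposition \ref{prop:CW maps into Eilenberg-MacLane}, using the shape-theoretic machinery assembled above. The key structural fact is that $Y$ is a CW model of $K(G,1)$, and that by Lemma \ref{lem:shapeExistence} the pointed continuum $(X,x_0)$ has a pointed shape $(\ul{W}, \ul{q})$ with $\ul{W} = (W_\alpha, p_\alpha^{\alpha'}, A) \in \Pro(\Ho\CW_*)$. First I would invoke the universal property of the shape, in the form given in Remark \ref{rem:equivShapeDef}: since $Y$ is a single CW complex, every pointed map $X \to Y$ factors uniquely up to homotopy through $\ul{W}$ in $\Pro(\Ho\Spc_*)$. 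This yields a natural bijection
\[
[(X,x_0),(Y,y_0)] \;\cong\; \Hom_{\Pro(\Ho\CW_*)}(\ul{W}, Y).
\]

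Next I would unwind the right-hand side using the definition of morphisms in a pro-category out of a pro-object into a single object. By the defining formula (the "really a definition" identity in the remarks following Definition \ref{defn:proCat}),
\[
\Hom_{\Pro(\Ho\CW_*)}(\ul{W}, Y) = \varinjlim_{\alpha \in A} [(W_\alpha, w_\alpha), (Y, y_0)].
\]
Now I would apply Proposition \ref{prop:CW maps into Eilenberg-MacLane} levelwise: for each $\alpha$ the forgetful map $[(W_\alpha,w_\alpha),(Y,y_0)] \to \Hom_{\Grp}(\pi_1(W_\alpha,w_\alpha), G)$ is a bijection, and these bijections are natural in $\alpha$ (both sides are contravariantly functorial via the structure maps $p_\alpha^{\alpha'}$, and the forgetful map commutes with $\pi_1$ of the structure maps). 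Hence passing to the direct limit over $A$ gives a bijection
\[
\varinjlim_{\alpha} [(W_\alpha,w_\alpha),(Y,y_0)] \;\cong\; \varinjlim_{\alpha} \Hom_{\Grp}(\pi_1(W_\alpha,w_\alpha), G).
\]

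It remains to identify the right-hand limit with $\Hom_{\Pro\Grp}(\ppi_1(X,x_0), G)$. Here I would use that $\ppi_1(X,x_0)$ is by definition $\Pro(\pi_1)$ applied to the shape, so it is the inverse system $(\pi_1(W_\alpha,w_\alpha), \pi_1(p_\alpha^{\alpha'}), A)$, and that $G$ sits inside $\Pro\Grp$ as a constant (single-index) system. The morphism formula out of a pro-object into a discrete object gives precisely $\Hom_{\Pro\Grp}(\ppi_1(X,x_0),G) = \varinjlim_\alpha \Hom_{\Grp}(\pi_1(W_\alpha,w_\alpha),G)$, matching the previous display. Composing the three bijections yields the claimed bijection, and a check that the composite is the forgetful map $\ppi_1$ finishes the proof.

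The main obstacle I anticipate is the naturality/compatibility bookkeeping in the middle step: one must verify that the levelwise bijections of Proposition \ref{prop:CW maps into Eilenberg-MacLane} genuinely commute with the transition maps $p_\alpha^{\alpha'}$ of the shape system, so that the colimit of the bijections is again a bijection and the resulting map really is induced by $\ppi_1$. This is where care is needed, since the bijection in Proposition \ref{prop:CW maps into Eilenberg-MacLane} is stated pointwise and one is applying $\varinjlim$; I would argue this by noting that both functors $[(-,-),(Y,y_0)]$ and $\Hom_{\Grp}(\pi_1(-,-),G)$ are contravariant functors on the pro-system and that the forgetful map is a natural transformation between them, so its value on the colimit is determined naturally. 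A secondary subtlety is ensuring the basepoints $w_\alpha$ are chosen coherently and that one is working throughout in the pointed setting, which is guaranteed by the existence of pointed shape in Lemma \ref{lem:shapeExistence}.
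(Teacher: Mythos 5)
Your proposal is correct and follows essentially the same route as the paper: reduce to the CW case via the universal property of the pointed shape, apply Proposition \ref{prop:CW maps into Eilenberg-MacLane} at each level of the shape system, and pass to the colimit. The paper merely organizes the same content as separate surjectivity and injectivity arguments by element-chasing through representatives, whereas you package it as a composite of natural bijections; the naturality check you flag is exactly the routine verification the paper carries out by hand.
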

\begin{proof}
    Take an inverse system $\ul{W} = (W_\alpha, w_\alpha, p_{\alpha}^{\alpha'}, A)$ of pointed connected CW complexes and a pointed morphism $\ul{q}: (X, x_0) \to (\ul{W}, \ul{w})$ so that $(\ul{W}, \ul{q})$ represents the pointed shape of $(X, x_0)$. As in Remark \ref{rem:shapeInvCW} we have that $\ppi_1(Y, y_0) \cong \pi_1(Y, y_0) = G$. \par
    
    We start by proving surjectivity. Given a map $\ul{\phi}: \ppi_1(X, x_0) \to G$, we know it is represented by a group homomorphism $\phi_\beta : \pi_1(W_\beta, w_\beta) \to G$, for some index $\beta \in A$. Now applying Proposition \ref{prop:CW maps into Eilenberg-MacLane}, this homomorphism is induced by a pointed map $f_\beta: (W_\beta, w_\beta) \to (Y, y_0)$. But then $f_\beta \circ q_\beta: (X, x_0) \to (Y, y_0)$ is a map such that $\ppi_1(f_\beta \circ q_\beta) = \ul{\phi}$. \par 
    
    Now for injectivity, consider two pointed maps $f, g: (X, x_0) \to (Y, y_0)$ such that $\ppi_1(f) = \ppi_1(g)$. By applying Remark \ref{rem:equivShapeDef} twice and taking a common upper bound in $A$, there is an index $\beta \in A$ and pointed maps $f_\beta, g_\beta: (W_\beta, w_\beta) \to (Y, y_0)$ such that $f_\beta \circ q_\beta$ is basepoint homotopic to $f$ and $g_\beta \circ q_\beta$ is basepoint homotopic to $g$. Since $\ppi_1(f) = \ppi_1(g)$, there is a $\gamma \geq \beta$ such that the map $\pi_1(f_\beta \circ p_\beta^\gamma) : \pi_1(W_\gamma. w_\gamma) \to G$ is equal to the map $\pi_1(g_\beta \circ p_\beta^\gamma)$. But since the pointed maps $f_\gamma = f_\beta \circ p_\beta^\gamma$ and $g_\gamma = g_\beta \circ p_\beta^\gamma$ from $(W_\gamma, w_\gamma)$ to $(Y, y_0) \cong K(G, 1)$ induce the same map on $\pi_1$, they must be basepoint homotopic by Proposition \ref{prop:CW maps into Eilenberg-MacLane}. But given such a homotopy $f_\gamma \simeq g_\gamma$, we obtain that
    \begin{align*}
        f & \simeq f_\beta \circ q_\beta \simeq f_\beta \circ p_\beta^\gamma \circ q_\gamma  = f_\gamma \circ q_\gamma \simeq g_\gamma \circ q_\gamma = g_\beta \circ p_\beta^\gamma \circ q_\gamma \simeq g_\beta \circ q_\beta \simeq g
    \end{align*}
    and so $f$ and $g$ are basepoint homotopic.
\end{proof}

\section{Polynomials via $\ppi_1(X, x_0)$}\label{sec:polys_via_ppi1}

In Section \ref{subsec:prelim_spaces_of_polys} we saw that a polynomial with no repeated roots on $X$ is nothing but a map $P: X \to B_n$, and in Section \ref{subsec:spaces_of_polys_and_roots} we saw that solving these polynomials amounts to lifting across the covering map $\rho: E_n \to B_n$. Recall also that the spaces $B_n$ and $E_n$ happen to be Eilenberg-MacLane spaces for the groups $\cB_n$ and $M_n$ respectively, so we already know a lot about maps into these spaces. \par

The following Corollary allows us to translate between questions about solving polynomials $P: X \to B_n$ and questions about factoring morphisms in $\Pro(\Grp)$. In particular the group morphisms $M_{n,i} \to \cB_n$ and $N_n \to \cB_n$ given by including subgroups of the braid group play a key role, and in the following we think of these group morphisms as pro-group morphisms via the inclusion functor $\Grp \to \Pro(\Grp)$.

\begin{cor}\label{cor:polys solutions and pro-groups}
    Consider a pointed continuum $(X, x_0)$ and a degree $n$ monic polynomial $b_0 \in B_n$ and a root $\lambda_0 \in E_n$ of $b_0$. 
    \begin{enumerate}[(i)]
        \item For each morphism of pro-groups $\ul{\phi} : \ppi_1(X, x_0) \to \cB_n$, there exists a polynomial $P: (X, x_0) \to (B_n, b_0)$ with $\ppi_1(P) = \ul{\phi}$.
        \item Consider a polynomial $P: (X, x_0) \to (B_n, b_0)$, with its $n$ distinct roots at $x_0$ given as $\{z_1, \dots, z_n\}$. Then $P$ has a solution $\lambda: (X, x_0) \to (E_n, z_i)$ if and only if $\ppi_1(P): \ppi_1(X, x_0) \to \pi_1(B_n, b_0)$ factors through $\pi_1(\rho): \pi_1(E_n, (b_0, z_i)) \cong M_{n,i} \to \cB_n \cong \pi_1(B_n, b_0)$ in $\Pro(\Grp)$. Moreover, $P$ is completely solvable if and only if $\ppi_1(P)$ factors through $\pi_1(\widetilde{\rho}): \pi_1(\widetilde{E}_n, (b_0, z_1, \dots, z_n)) \cong N_n \to \cB_n \cong \pi_1(B_n, b_0)$ in $\Pro(\Grp)$.
    \end{enumerate}
\end{cor}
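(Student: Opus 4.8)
The whole statement follows from two ingredients: the shape-theoretic Eilenberg--MacLane correspondence of Lemma \ref{lem:maps into Eilenberg-MacLane}, and the homotopy lifting property of covering maps. The point is that $B_n$, $E_n$, and the $n!$-sheeted cover $\wt{E}_n \to B_n$ whose points are polynomials together with an ordering of their $n$ roots are all aspherical: $B_n$ is a $K(\cB_n,1)$, and since a connected cover of a $K(\pi,1)$ is again aspherical with $\pi_1$ the corresponding subgroup, $E_n$ (based at $z_i$) and $\wt{E}_n$ are aspherical with fundamental groups $M_{n,i}$ and $N_n$. To invoke Lemma \ref{lem:maps into Eilenberg-MacLane}, which is phrased for CW models, I would first fix homotopy equivalences from CW models onto each of these spaces (they are open subsets of $\mathbb{C}^n$, hence ANRs of CW homotopy type); pointed homotopy classes and the induced maps on $\ppi_1$ transport across such equivalences, so the bijection of the lemma holds with $B_n$, $E_n$, $\wt{E}_n$ themselves as targets.

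For part (i), I would simply invoke surjectivity in Lemma \ref{lem:maps into Eilenberg-MacLane} with target $B_n \cong K(\cB_n,1)$: the class $\ul{\phi} \in \Hom_{\Pro\Grp}(\ppi_1(X,x_0),\cB_n)$ is realized by a pointed map $P : (X,x_0) \to (B_n,b_0)$, i.e.~a degree $n$ monic polynomial with no repeated roots, satisfying $\ppi_1(P) = \ul{\phi}$.

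For part (ii), write $\pi : E_n \to B_n$ for the covering, so that $\pi_* : \pi_1(E_n,z_i) \to \pi_1(B_n,b_0)$ is the inclusion $M_{n,i} \hookrightarrow \cB_n$. If $P$ has a solution $\lambda : (X,x_0)\to (E_n,z_i)$, then $P = \pi\circ\lambda$ identically, so $\ppi_1(P) = \pi_*\circ\ppi_1(\lambda)$ has image inside $\pi_*(M_{n,i}) = M_{n,i}$. Conversely, suppose $\im \ppi_1(P) \subseteq M_{n,i}$. I would view $\ppi_1(P)$ as a morphism $\ppi_1(X,x_0)\to M_{n,i} = \pi_1(E_n,z_i)$ and apply Lemma \ref{lem:maps into Eilenberg-MacLane} (target $E_n$) to produce a pointed map $\lambda : (X,x_0)\to (E_n,z_i)$ with $\ppi_1(\lambda) = \ppi_1(P)$. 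Functoriality gives $\ppi_1(\pi\circ\lambda) = \pi_*\circ\ppi_1(\lambda) = \ppi_1(P)$ as maps into $\cB_n$, so injectivity in Lemma \ref{lem:maps into Eilenberg-MacLane} (target $B_n$) yields a pointed homotopy $\pi\circ\lambda \simeq P$. This $\lambda$ is only a lift up to homotopy; to rectify it I would lift a pointed homotopy $H$ from $\pi\circ\lambda$ to $P$ through $\pi$, starting at $\lambda$, and set $\lambda' = \wt{H}_1$, which satisfies $\pi\circ\lambda' = P$ exactly. Since $H$ is basepoint-preserving, unique path lifting forces $\wt{H}(x_0,-)$ to be constant at $z_i$, so $\lambda'(x_0) = z_i$ and $\lambda'$ is the desired solution. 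For the complete-solvability clause I would run the identical argument with $E_n$ replaced by the ordered-roots cover $\wt{E}_n \cong K(N_n,1)$, using that a lift of $P$ across $\wt{E}_n \to B_n$ is exactly a continuous ordering $f_1,\dots,f_n$ of the roots, i.e.~a factorization $P(x,z)=\prod_{k}(z-f_k(x))$.

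The main obstacle is the passage from a homotopy-theoretic lift to a genuine (exact) solution: shape theory only detects maps up to homotopy, whereas a solution must satisfy $\pi\circ\lambda = P$ identically, and the classical $\pi_1$-lifting criterion is unavailable since a continuum need not be locally path-connected. The homotopy lifting property supplies the rectification, but care is needed to keep everything pointed so that $\lambda'$ lands at the prescribed root $z_i$; the accompanying bookkeeping (that $\ppi_1$ of the covering maps really are the subgroup inclusions, and that the CW-model replacements do not disturb the induced maps on $\ppi_1$) is routine but must be checked.
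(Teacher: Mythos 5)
Your proposal is correct and follows essentially the same route as the paper: part (i) via surjectivity in Lemma \ref{lem:maps into Eilenberg-MacLane}, and the converse in part (ii) by realizing $\ppi_1(P)$ as a map into $E_n \cong K(M_{n,i},1)$, using injectivity to get a pointed homotopy $\rho\circ\lambda \simeq P$, and then rectifying via the homotopy lifting property of the covering --- exactly the paper's ``having a lift is a homotopy invariant'' step. The only divergence is the complete-solvability clause, where you lift through the $n!$-sheeted ordered-roots cover $\wt{E}_n \cong K(N_n,1)$ directly, while the paper instead observes that complete solvability amounts to having a solution based at each $z_i$ and uses $N_n = \bigcap_i M_{n,i}$; both are valid and essentially equivalent.
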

\begin{proof}
    All are a consequence of Lemma \ref{lem:maps into Eilenberg-MacLane}. Note that $B_n$ and $E_n$ are both smooth manifolds and hence they are CW models for the Eilenberg-MacLane spaces $K(\cB_n, 1)$ and $K(M_n, 1)$ respectively. \par
    To prove (i), we use that the forgetful map
    \[\ppi_1 : [(X, x_0), (B, b_0)] \longrightarrow \Hom_{\Pro\Grp}(\ppi_1(X, x_0), \cB_n)\]
    is surjective to find a polynomial $P$ that maps to $\ul{\phi}$. \par
    For (ii), the ``if" direction is immediate, as $P$ having a solution $\lambda$ means that the diagram
    \[\begin{tikzcd}
        & (E_n, z_i) \\
        (X, x_0) & (B_n, b_0)
        \arrow["P"', from=2-1, to=2-2]
        \arrow["\rho", from=1-2, to=2-2]
        \arrow["\lambda", from=2-1, to=1-2]
    \end{tikzcd}\]
    commutes, which, after applying the functor $\ppi_1$, gives the commutative diagram
    \[\begin{tikzcd}
        & M_{n,i} \\
        \ppi_1(X, x_0) & \cB_n
        \arrow["\ppi_1(P)"', from=2-1, to=2-2]
        \arrow["\pi_1(\rho)", hook, from=1-2, to=2-2]
        \arrow["\ppi_1(\lambda)", from=2-1, to=1-2]
    \end{tikzcd}\]
    showing that $\ppi_1(P)$ factors through $\pi_1(\rho)$. \par
    For the converse, assume that $\ppi_1(P)$ factors through $\pi_1(\rho)$, which means that we have a commutative diagram
    \[\begin{tikzcd}
        & M_{n,i} \\
        \ppi_1(X, x_0) & \cB_n
        \arrow["\ppi_1(P)"', from=2-1, to=2-2]
        \arrow["\pi_1(\rho)", hook, from=1-2, to=2-2]
        \arrow["\ul{\phi}", from=2-1, to=1-2]
    \end{tikzcd}\]
    for some morpshim $\ul{\phi}: \ppi_1(X, x_0) \to M_{n,i}$. Similarly to part (i), we can use surjectivity of the map in Lemma \ref{lem:maps into Eilenberg-MacLane} for the Eilenberg-MacLane space $E_n = K(M_{n,i}, 1)$ to get a pointed map $f: (X, x_0) \to (E_n, z_i)$ with $\ppi_1(f) = \ul{\phi}$. \par
    We see that $\rho \circ f: (X, x_0) \to (B_n, b_0)$ is a pointed map such that
    \[\ppi_1(\rho \circ f) = \pi_1(\rho) \circ \ul{\phi} = \ppi_1(P)\]
    Using injectivity of the map in Lemma \ref{lem:maps into Eilenberg-MacLane} for the Eilenberg-MacLane space $B_n$, we see that $\rho \circ f$ is homotopic to $P$ (relative to the basepoint). Finally, since $\rho$ is a covering map, the property of having a lift across $\rho$ is a homotopy invariant (by lifting the homotopy), so $\rho \circ f$ having a lift $f$ implies that $P$ has a lift $\lambda$ as required. \par 

    Now we consider the statement about complete solvability. The above proof goes through unchanged by switching $\rho: E_n \to B_n$ for $\widetilde{\rho}: \widetilde{E}_n \to B_n$, as lifting across $\widetilde{\rho}$ corresponds to complete solvability. The key properties we used were that $E_n$ is an Eilenberg-MacLane space and $\rho$ is a covering map, which is true of $\widetilde{E}_n$ and $\widetilde{\rho}$ as well.
\end{proof}

This Corollary inspires the following conditions a pro-group $\ul{G}$ can satisfy:
\begin{enumerate}
    \item[$(*_n)$] For each morphism $\ul{\phi}: \ul{G} \to B_n$, there is an $i \in \{1, \dots, n\}$ such that $\ul{\phi}$ factors through the inclusion $M_{n,i} \to \cB_n$ in $\Pro(\Grp)$.
    \item[$(**_n)$] All morphisms $\ul{\phi}: \ul{G} \to B_n$ factor through the inclusion $N_n \to \cB_n$ in $\Pro(\Grp)$. Equivalently, for any morphism $\ul{\phi}: \ul{G} \to B_n$ the composition $\tau \circ \ul{\phi}: \ul{G} \to S_n$ is trivial (as $N_n = \ker(\tau)$ where $\tau : \cB_n \to S_n$ is the canonical map).
\end{enumerate}

We now have complete topological characterizations of when monic polynomials with no repeated roots over $C(X)$ have exact solutions. 

\begin{thm}\label{thm:progroup_char_solvability}
    Consider a continuum $X$, a positive integer $n$, and any basepoint $x_0 \in X$. Then
    \begin{enumerate}[(i)]
        \item Every monic polynomial of degree $n$ over $C(X)$ with no repeated roots has an exact root if and only if $\ppi_1(X, x_0)$ satisfies $(*_n)$.
        \item Every monic polynomial of degree $n$ over $C(X)$ with no repeated roots can be factored completely if and only if $\ppi_1(X, x_0)$ satisfies $(**_n)$.
    \end{enumerate}
\end{thm}

\begin{thm}\label{thm:two_dim_continuum_noexactroot}
    Given a positive integer $n$ and a pro-group $\ul{G}$ which is an inverse system of finitely presented groups, and a morphism $\ul{\phi}: \ul{G} \to B_n$ which does not factor through any of the inclusions $M_{n,i} \to \cB_n$, there exists a $2$-dimensional pointed continuum $(X, x_0)$ with $\ppi_1(X, x_0) \cong \ul{G}$ and a polynomial $P:X \to B_n$ with no repeated roots which has no exact root.
\end{thm}
\begin{proof}
    Given such a pro-group $\ul{G}$, apply Lemma \ref{lem:compactumWithGivenProgroup} to obtain $(X,x_0)$; then apply Theorem \ref{thm:progroup_char_solvability}(i) to obtain the polynomial $P$. 
\end{proof}

\section{Main results and some examples}\label{sec:main_results}

Applying Theorem \ref{thm:progroup_char_solvability} and Corollary \ref{cor:completelySolvableDegN} for each positive integer, we obtain a topological characterization for when the ring of continuous functions $C(X)$ is approximately algebraically closed in terms of the fundamental pro-group, when $\dim X \leq 1$. 

\begin{thm}\label{thm:low_dim_approx_alg_closure_propi1}
    Let $X$ be a continuum with $\dim X \leq 1$, and pick any basepoint $x_0 \in X$. The following are equivalent: 
    \begin{enumerate}[(i)]
        \item $C(X)$ is approximately algebraically closed. 
        \item The fundamental pro-group $\ppi_1(X, x_0)$ satisfies $(*_n)$ for every $n$. 
        \item The fundamental pro-group $\ppi_1(X, x_0)$ satisfies $(**_n)$ for every $n$. 
    \end{enumerate}
\end{thm}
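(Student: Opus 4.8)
The plan is to deduce this three-way equivalence purely from the per-degree characterizations already in hand, treating approximate algebraic closure as the conjunction over all $n \geq 1$ of the degree-$n$ statements. The cleanest route is the cycle (iii) $\Rightarrow$ (ii) $\Rightarrow$ (i) $\Rightarrow$ (iii), in which only the first implication is combinatorial while the two substantive implications each invoke one of the reductions from Sections \ref{sec:pertStab}--\ref{sec:polys_via_ppi1}. I would begin by dispatching (iii) $\Rightarrow$ (ii), which is immediate at the level of pro-groups: since $N_n = \bigcap_i M_{n,i}$, any morphism $\ppi_1(X) \to \cB_n$ whose image lies in $N_n$ a fortiori has image in some (indeed every) $M_{n,i}$, so $(**_n)$ implies $(*_n)$ for each $n$.

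For (ii) $\Rightarrow$ (i), I would fix $n$ and feed $(*_n)$ into Theorem \ref{thm:progroup_char_solvability}(i) to conclude that every degree-$n$ monic polynomial over $C(X)$ with no repeated roots has an exact root. Since $\dim X \leq 1$, Theorem \ref{thm:perturbAndStabDegN} (the implication (3) $\Rightarrow$ (1)) then upgrades this to the statement that every degree-$n$ monic polynomial has approximate roots. Ranging over all $n \geq 1$ shows that every non-constant monic polynomial over $C(X)$ has approximate roots, i.e.\ that $C(X)$ is approximately algebraically closed.

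For (i) $\Rightarrow$ (iii), I would again work one degree at a time. Approximate algebraic closure gives, for each $n$, that every monic polynomial of degree at most $n$ has approximate roots; by Corollary \ref{cor:completelySolvableDegN} (the implication (1) $\Rightarrow$ (3), valid since $\dim X \leq 1$) this means every such polynomial with no repeated roots is completely solvable. Quantifying over all $n$, every monic polynomial with no repeated roots of any degree is completely solvable, and Theorem \ref{thm:progroup_char_solvability}(ii) translates the degree-$n$ instance of this into $(**_n)$. Hence $\ppi_1(X)$ satisfies $(**_n)$ for every $n$, closing the cycle.

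The main obstacle, such as it is, is purely bookkeeping around the quantifiers \emph{degree $n$} versus \emph{degree at most $n$}: Theorem \ref{thm:progroup_char_solvability} is phrased for a fixed degree $n$, whereas Corollary \ref{cor:completelySolvableDegN} is phrased for degrees at most $n$, so one must check that conjoining the degree-$n$ statements over all $n$ coincides with conjoining the degree-$\leq n$ statements over all $n$ (they do, since each family refines to the other). All of the genuinely analytic and topological content has already been absorbed into the cited results, so no new estimates or constructions are required at this stage.
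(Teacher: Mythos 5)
Your proof is correct and is essentially the paper's own argument: the paper proves this theorem by exactly the combination of Theorem \ref{thm:progroup_char_solvability} and Corollary \ref{cor:completelySolvableDegN} applied degree by degree, with the trivial inclusion $N_n \subseteq M_{n,i}$ handling (iii) $\Rightarrow$ (ii). Your write-up merely makes explicit the quantifier bookkeeping (degree $n$ versus degree at most $n$) that the paper leaves implicit, and your use of Theorem \ref{thm:perturbAndStabDegN} in place of Corollary \ref{cor:completelySolvableDegN} for one direction is an inessential variant since the latter is derived from the former.
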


\begin{example}\label{ex:co-ec_continua}
    The notion of a \emph{co-existentially closed continuum}, first introduced by Bankston in \cite{Bankston1999}, is important in model theory. As shown in \cite[Corollary 4.13]{Bankston2006}, co-existentially closed continua are one-dimensional and hereditarily indecomposable. It has recently been shown that co-existentially closed continua are approximately algebraically closed \cite[Theorem 4.6]{EagleLau}. Thus, Theorem \ref{thm:low_dim_approx_alg_closure_propi1} applies, and we obtain a new topological property of co-existentially closed continua, concerning their fundamental pro-group. 
\end{example}

We now show that in order to solve certain special polynomials, namely those of the form $z^m - f \in C(X)[z]$ for $f$ non-vanishing, it is enough to look at the \v{C}ech cohomology of $X$. There are some classical results are stated in terms of the divisibility of $\check{H}^1(X)$, for which we now provide new, updated proofs using shape-theoretic invariants. We start with a small proposition.

\begin{prop}\label{prop:Divisibility of dual of progroup and subgroups}
    Given a pro-group $\ul{G}$, consider the dual group
    \[A = \varinjlim \Hom(\ul{G}, \bZ)\]
    Then $A$ is $m$-divisible if and only if any morphism $\ul{\phi}: \ul{G} \to \bZ$ factors through the inclusion of the subgroup $m\bZ \to \bZ$ in $\Pro(\Grp)$ (and hence morphisms into $\bZ^k$ factor through the inclusion $m^a\bZ^k \to \bZ^k$ for each $a, k \in \bN$, by working one coordinate at a time and by induction on $a$).
\end{prop}
\begin{proof}
A morphism $\ul{\phi}: \ul{G} \to \bZ$ is exactly an element of $A$, and $\ul{\phi}$ factors through $m\bZ \to \bZ$ if and only if there exists some index $\alpha$ such that the image of a representative $\phi_\alpha : G_\alpha \to \bZ$ lands in $m\bZ$, if and only if there exists some index $\alpha$ and homomorphism $\psi_\alpha : G_\alpha \to \bZ$ such that $\phi_\alpha = m\psi_\alpha$, if and only if there exists some $\ul{\psi}: \ul{G} \to \bZ$ with $m \ul{\psi} = \ul{\phi}$, which is the definition of $A$ being $m$-divisible.
\end{proof}

With this we can analyze $m$th roots of non-vanishing functions, and we arrive at a well-known statement in the spirit of \cite[Theorem 1.3]{KawamuraMiura}.

\begin{cor}\label{cor:divisibility and mth roots}
    Given a continuum $X$, we get that $\check{H}^1(X)$ is $m$-divisible if and only if for any non-vanishing $f \in C(X)$ there exists a $g \in C(X)$ such that $g^m = f$.
\end{cor}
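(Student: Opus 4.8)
The plan is to translate the statement about $m$th roots of non-vanishing functions into a statement about lifting maps into $\bC^\times$ across the $m$th-power covering, and then apply the Eilenberg--MacLane machinery from Section~\ref{sec:proCatShape} together with Proposition~\ref{prop:Divisibility of dual of progroup and subgroups}. First I would observe that a non-vanishing function $f \in C(X)$ is precisely a continuous map $f: X \to \bC^\times$, and that asking for $g \in C(X)$ with $g^m = f$ is exactly asking for a lift of $f$ across the $m$th power map $\pi_m: \bC^\times \to \bC^\times$, $z \mapsto z^m$, which is an $m$-sheeted covering map. Since $\bC^\times \simeq S^1$ is a $K(\bZ, 1)$ (with a CW model), the existence and uniqueness-up-to-homotopy results from Lemma~\ref{lem:maps into Eilenberg-MacLane} apply: homotopy classes of maps $X \to \bC^\times$ are classified by $\Hom_{\Pro\Grp}(\ppi_1(X), \bZ)$, i.e.\ by elements of the dual group $A := \varinjlim \Hom(\ppi_1(X), \bZ)$, which is exactly $\check H^1(X)$.

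Next I would identify the subgroup structure: under $\pi_1(\pi_m): \pi_1(\bC^\times) \to \pi_1(\bC^\times)$, the map $\bZ \to \bZ$ is multiplication by $m$, so the image subgroup is $m\bZ$. As in the proof of Corollary~\ref{cor:polys solutions and pro-groups}(ii), a map $f: X \to \bC^\times$ lifts across $\pi_m$ if and only if the induced morphism $\ppi_1(f): \ppi_1(X) \to \bZ$ has image contained in $m\bZ$. Concretely, if $\ppi_1(f)$ lands in $m\bZ$, then shrinking the codomain gives a morphism $\ppi_1(X) \to \bZ = \pi_1(\bC^\times)$ into the covering, which by surjectivity in Lemma~\ref{lem:maps into Eilenberg-MacLane} is realized by a map $g: X \to \bC^\times$; then $\pi_m \circ g$ and $f$ induce the same morphism on $\ppi_1$, so by injectivity they are homotopic, and since $\pi_m$ is a covering the lifting property transfers along the homotopy, yielding an actual lift, i.e.\ a function $g$ with $g^m = f$. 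Conversely, an exact $m$th root $g$ gives a factorization $f = \pi_m \circ g$, forcing $\ppi_1(f)$ to factor through multiplication by $m$, hence to have image in $m\bZ$.

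Finally I would assemble these pieces: by the above, \emph{every} non-vanishing $f$ admits an $m$th root if and only if \emph{every} morphism $\ppi_1(X) \to \bZ$ has image in $m\bZ$, and by Proposition~\ref{prop:Divisibility of dual of progroup and subgroups} this latter condition is precisely the $m$-divisibility of $A = \check H^1(X)$. I expect the main technical obstacle to be the careful handling of the homotopy-lifting step for a covering over a possibly badly-behaved continuum $X$ — one cannot lift maps out of $X$ directly, so the argument must route entirely through the shape-theoretic bijection of Lemma~\ref{lem:maps into Eilenberg-MacLane}, realizing candidate lifts on the pro-group side and transferring back. A secondary point to verify is that the identification $A \cong \check H^1(X)$ is the intended one, matching the $\varinjlim \Hom(\ppi_1(X), \bZ)$ appearing in Proposition~\ref{prop:Divisibility of dual of progroup and subgroups} with the \v{C}ech cohomology $\check H^1(X)$ defined earlier; this should follow from $\bC^\times \simeq K(\bZ,1)$ and the universal-coefficient/representability identification of $\check H^1$ with homotopy classes of maps into $S^1$.
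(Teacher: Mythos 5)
Your proposal is correct and follows essentially the same route as the paper: the paper identifies the $m$th-root problem with lifting a map $X \to \bC^\times$ across the $m$th-power covering (realized there as the restriction of $E_m \to B_m$ to the slice $C_m \cong \bC^\times$ of polynomials $z^m - \mu$), applies the Eilenberg--MacLane classification of Lemma \ref{lem:maps into Eilenberg-MacLane} in both directions, and concludes via Proposition \ref{prop:Divisibility of dual of progroup and subgroups}, exactly as you do. The only cosmetic difference is that you work with $\pi_m : \bC^\times \to \bC^\times$ directly rather than through the braid-group picture, and your homotopy-lifting step is precisely the argument already carried out in Corollary \ref{cor:polys solutions and pro-groups}(ii).
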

\begin{proof}
    Consider the subset $C_m \seq B_m$ consisting of polynomials $z^m - \mu$ for $\mu \in \bC^\times$, which by this parametrization is homeomorphic to $\bC^\times$. Then the preimage of $C_m$ under our usual covering map $\rho: E_m \to B_m$ is again a copy of $\bC^\times$, and the covering map is given by $\lambda \mapsto \lambda^m$.
    Then given such an $f$ we get a polynomial $P_f: X \to C_m \seq B_m$ by $P_f(x, z) = z^m - f(x)$ and the question of asking for a solution $g$ is equivalent to asking for a lift on fundamental groups:
    \[\begin{tikzcd}
	& {\bC^\times \seq E_m} &&& {m\bZ} \\
	X & {C_m \seq B_m} && {\ppi_1(X,x_0)} & \bZ
	\arrow[from=1-2, to=2-2]
	\arrow[hook, from=1-5, to=2-5]
	\arrow[dashed, from=2-1, to=1-2]
	\arrow["{P_f}"', from=2-1, to=2-2]
	\arrow[dashed, from=2-4, to=1-5]
	\arrow["{\ppi_1(P_f)}"', from=2-4, to=2-5]
    \end{tikzcd}\]
    Now, by Proposition \ref{prop:Divisibility of dual of progroup and subgroups} since the dual group
    \[\check{H}^1(X) = \varinjlim \Hom(\ppi_1(X,x_0), \bZ)\]
    is $m$-divisible, we get that $\ppi_1(P_f)$ factors through $m\bZ \to \bZ$ as required. \par
    
    Conversely, given a morphism $\ul{\phi}: \ppi_1(X,x_0) \to \bZ$ we know it is represented by some $\phi_\alpha: \pi_1(W_\alpha, w_\alpha) \to \bZ$, where $\ul{q}: (X, x_0) \to (\ul{W}, \ul{w})$ represents the pointed shape of $(X,x_0)$. But $C_m \cong \bC^\times$ is an Eilenberg-MacLane space $K(\bZ, 1)$, so this $\phi_\alpha$ is represented by a pointed map $h_\alpha: (W_\alpha, w_\alpha) \to (\bC^\times, 1)$ and $P_f = h_\alpha \circ q_\alpha: X \to C_m$ is a polynomial $P_f(z, x) = z^m - f(x)$ with $\ppi_1(P_f) = \ul{\phi}$. By assumption $f$ has an $m^{\text{th}}$ root $g$, which by the above reasoning means that $\ppi_1(P_f) = \ul{\phi}$ factors through $m\bZ \to \bZ$. So $\check{H}^1(X)$ is $m$-divisible by Proposition \ref{prop:Divisibility of dual of progroup and subgroups}.
\end{proof}

The next theorems are about using $m^{\text{th}}$ roots to solve more complicated polynomials. All of these results are about assuming some amount of solvability of $\ppi_1(X, x_0)$. The first is a restatement of \cite[Theorem 1.8]{GorinLin} in our language of fundamental pro-groups, dealing with the case that the fundamental pro-group is abelian.

\begin{thm}\label{thm:low_dim_approx_alg_closure_cechH1}
    Let $X$ be a continuum with $\dim X \leq 1$ such that for some $x_0 \in X$ the fundamental pro-group $\ppi_1(X, x_0)$ is abelian. The following are equivalent:
    \begin{enumerate}[(i)]
        \item $C(X)$ is approximately algebraically closed. 
        \item The first \v{C}ech cohomology group $\check{H}^1(X)$ is divisible. 
    \end{enumerate}
\end{thm}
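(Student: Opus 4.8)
The plan is to route everything through the main characterization, Theorem \ref{thm:low_dim_approx_alg_closure_propi1}, which for $\dim X \le 1$ identifies (i) with the condition that $\ppi_1(X)$ satisfies $(**_n)$ for every $n$. So the real task is to show that, under the pro-abelian hypothesis, the statement ``$(**_n)$ holds for all $n$'' is equivalent to divisibility of $\check{H}^1(X)$.

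For the implication (i) $\Rightarrow$ (ii) I would not even need the pro-abelian hypothesis. Given a non-vanishing $f \in C(X)$ and an integer $m \ge 1$, the polynomial $z^m - f$ has no repeated roots (its roots at each point are the $m$ distinct $m$-th roots of $f(x) \ne 0$), so by Theorem \ref{thm:noRepeatRootsStability} the existence of approximate roots guaranteed by (i) upgrades to an exact root $g$, i.e.\ $g^m = f$. Thus every non-vanishing function admits an $m$-th root for every $m$, and Corollary \ref{cor:divisibility and mth roots} then gives that $\check{H}^1(X)$ is $m$-divisible for every $m$, hence divisible.

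The substantive direction is (ii) $\Rightarrow$ (i), where the pro-abelian hypothesis is used. By Theorem \ref{thm:low_dim_approx_alg_closure_propi1} it suffices to verify $(**_n)$ for each fixed $n$, so I fix a morphism $\ul{\phi}\colon \ppi_1(X) \to \cB_n$ and aim to show $\tau \circ \ul{\phi}$ is trivial. Since $\ppi_1(X)$ is pro-abelian, $\ul{\phi}$ has a representative $\phi_\alpha \colon G_\alpha \to \cB_n$ with $G_\alpha$ abelian, so its image $A := \phi_\alpha(G_\alpha)$ is an abelian subgroup of $\cB_n$. At this point I would invoke the standard structural fact (already used by Gorin and Lin \cite{GorinLin}) that abelian subgroups of braid groups are finitely generated free abelian; consequently $A \cong \bZ^r$, while $Q := \tau(A) \le S_n$ is a finite abelian quotient of $A$. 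Because $A$ is free abelian, for each cyclic factor projection $\pi_i \colon Q \to \bZ/d_i$ the composite $\chi_i := \pi_i \circ \tau|_A \colon A \to \bZ/d_i$ lifts to a homomorphism $\psi_i \colon A \to \bZ$ with $\chi_i = \psi_i \bmod d_i$. Now $\psi_i \circ \phi_\alpha$ represents an element of $\check{H}^1(X) = \varinjlim \Hom(\ppi_1(X), \bZ)$; since $\check{H}^1(X)$ is divisible it is in particular $d_i$-divisible, so by Proposition \ref{prop:Divisibility of dual of progroup and subgroups} this morphism has image in $d_i\bZ$ after passing to a later index, and hence its reduction $\chi_i \circ \ul{\phi}$ is trivial. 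As the $\pi_i$ are jointly injective on $Q$, passing to a common later index forces $\tau \circ \ul{\phi}$ itself to be trivial, establishing $(**_n)$; running over all $n$ yields (i) via Theorem \ref{thm:low_dim_approx_alg_closure_propi1}.

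The main obstacle is precisely the braid-theoretic input in this hard direction: the whole argument hinges on $A$ being finitely generated free abelian, so that every finite cyclic quotient of the permutation image $Q = \tau(A)$ is detected by a genuine $\bZ$-valued homomorphism on $\ppi_1(X)$, which divisibility of $\check{H}^1(X)$ then annihilates. Without torsion-freeness and finite generation of abelian subgroups of $\cB_n$ a cyclic quotient $\chi_i \colon A \to \bZ/d_i$ need not lift to $\bZ$ (there would be a nonzero obstruction in $\operatorname{Ext}^1(A,\bZ)$), and the bridge from divisibility to the vanishing of $\tau \circ \ul{\phi}$ would collapse. The remaining work — bookkeeping the stages at which the various pro-morphisms become trivial and recording joint injectivity of the cyclic projections of $Q$ — is routine.
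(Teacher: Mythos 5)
Your proof is correct in substance, but it takes a genuinely different route from the paper's. The paper's proof is two lines: Corollary \ref{cor:completelySolvableDegN} converts (i) into the statement that every monic polynomial with no repeated roots over $C(X)$ is completely solvable, and then \cite[Theorem 1.8]{GorinLin} is cited as saying precisely that, for $\ppi_1(X)$ pro-abelian, this complete solvability is equivalent to divisibility of $\check{H}^1(X)$. You instead reprove the Gorin--Lin equivalence from scratch through the $(**_n)$ criterion of Theorem \ref{thm:low_dim_approx_alg_closure_propi1}: your (i)$\Rightarrow$(ii) via $z^m-f$, Theorem \ref{thm:noRepeatRootsStability} and Corollary \ref{cor:divisibility and mth roots} is fine and, as you note, needs no pro-abelian hypothesis; and your (ii)$\Rightarrow$(i) argument --- factor $\ul{\phi}$ through an abelian group, lift the cyclic components of $\tau$ on the free abelian image to $\bZ$-valued homomorphisms, and kill them using divisibility via Proposition \ref{prop:Divisibility of dual of progroup and subgroups} --- is a legitimate self-contained substitute. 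What your route buys is independence from the external citation; what it costs is the braid-theoretic input you flag. On that point, you do not actually need the (true but nontrivial) fact that every abelian subgroup of $\cB_n$ is finitely generated, and attributing it to Gorin--Lin is shaky. Torsion-freeness of $\cB_n$ is immediate from the paper's observation that $B_n$ is a finite-dimensional manifold which is a $K(\cB_n,1)$, and finite generation of the relevant image comes for free by refining the representative: the shape of a continuum is an inverse system of finite CW complexes, so after passing far enough up the system the representative of $\ul{\phi}$ has finitely generated source and, by the pro-abelian hypothesis, abelian image; being also torsion-free in $\cB_n$, that image is free abelian of finite rank. With that small repair the lifting of each $\chi_i$ to $\psi_i$ is unobstructed and the rest of your bookkeeping goes through.
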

\begin{proof}
    We know that $\ppi_1(X, x_0)$ is isomorphic to an inverse sequence of abelian groups by Proposition \ref{prop:abelian_pro_groups-abelian_inverse_systems}. By Corollary \ref{cor:completelySolvableDegN}, (i) is equivalent to the assertion that every monic polynomial with coefficients in $C(X)$ with no repeated roots can be factored completely. But by \cite[Theorem 1.8]{GorinLin}, since $\ppi_1(X, x_0)$ is isomorphic to an inverse sequence of abelian groups, this is equivalent to (ii). 
\end{proof}

\begin{examples}\label{ex:variousExamplesofApproxAlgClosure_cechH1}
Using Theorem \ref{thm:low_dim_approx_alg_closure_cechH1} above, we can describe some examples and non-examples of approximately algebraically closed continua. 
    \begin{enumerate}
        \item A tree-like continuum, i.e.~an inverse limits of graphs, is approximately algebraically closed, because its $\check{H}^1$ vanishes. 
        \item A solenoid $\Sigma$ is approximately algebraically closed if and only if it is the universal one (i.e. has $\check{H}^1(\Sigma; \mathbb{Z}) = \mathbb{Q}$). 
        \item A pseudo-solenoid $\mathbb{P}\Sigma$ is shape-equivalent to $\Sigma$, so by (2), $\mathbb{P}\Sigma$ is approximately algebraically closed if and only if it is the universal one. 
    \end{enumerate}
Descriptions and definitions of solenoids and pseudo-solenoids may be found in \cite[Section 5]{EagleLau}. It was observed in \cite[Corollary 3.4]{KawamuraMiura2009} that a solenoid or pseudo-solenoid which is not universal cannot be approximately algebraically closed; Theorem \ref{thm:low_dim_approx_alg_closure_cechH1} establishes the converse of this observation, and gives further indication that the universal pseudo-solenoid may be co-existentially closed, which is partial progress towards answering \cite[Problem 5.15]{EagleLau}.
\end{examples}

\begin{example}
    There exists a one-dimensional pointed continuum $(X, x_0)$ which is acyclic (i.e. $\check{H}^1(X) = 0$), but for which the fundamental pro-group $\ppi_1(X, x_0)$ is non-abelian (i.e. is not isomorphic to an inverse limit of abelian groups) and satisfies $(*_n)$ for every $n \geq 1$. In particular $C(X)$ is approximately algebraically closed, but we cannot use the criterion in Theorem \ref{thm:low_dim_approx_alg_closure_cechH1} to determine this, and must use the full power of the main theorem. \par
    To produce such an $X$, we will first construct a sequence of nested subgroups
    \begin{align*}
        G_1 \supseteq G_2 \supseteq G_3 \supseteq \cdots
    \end{align*}
    with the following properties:
    \begin{enumerate}
        \item $G_n$ is a free group on two generators for every $n$;
        \item $G_{n+1}$ is contained in the commutator subgroup $G_n'$ of $G_{n}$ for each $n$; and, 
        \item Given any group homomorphism $\varphi : G_n \to \cB_k$ from any group $G_n$ in the sequence to any braid group $\cB_k$, there is some integer $m \geq n$ such that $G_m$ lies in the kernel of $\tau \circ \varphi$, where $\tau : \cB_k \to S_k$ is the canonical map. 
    \end{enumerate}
    We will construct these groups recursively, and at each stage of the recursion, we will not only produce a group $G_n$, but we will also keep track of an enumeration of the countable set $\bigcup_{k=1}^{\infty}\Hom(G_n,\cB_k)$. This enumeration will help build the groups further down in the sequence. We will also let $(\ell_i)_{i =1}^{\infty}$ be a sequence of positive integers with the property that each positive integer appears infinitely often in the sequence and $\ell_i \leq i$ for all $i$. For example, we could take $(\ell_i)$ to be the sequence $1,1,2,1,2,3,1,2,3,4, \ldots$. \par
    For the initial step in the recursion, set $G_1 = \mathbb{Z} * \mathbb{Z}$, and choose an enumeration $(\varphi_{1,j})_{j=1}^{\infty}$ of the set $\bigcup_{k=1}^{\infty} \Hom(G_1, \cB_k)$. Note that this set is actually countable because $\cB_k$ is countable and $G_1$ is finitely generated. For the recursive step, suppose that for some $n \geq 1$ we have constructed the subgroups $G_m$ and the enumerations $(\varphi_{m,j})_{j=1}^{\infty}$ of $\bigcup_{k=1}^{\infty} \Hom(G_m, \cB_k)$ for all $1 \leq m \leq n$. Let $j_n:= \#\{ i \in \mathbb{N} : 1 \leq i \leq n \text{ and } \ell_i = \ell_n\}$. Consider the map $\psi_n : G_n' \to S_k$ given by the composition of the following maps:
    \[\begin{tikzcd}
	{G_n'} & {G_n} & {G_{\ell_n}} & {\mathcal{B}_k} & {S_k}
	\arrow[hook, from=1-1, to=1-2]
	\arrow[hook, from=1-2, to=1-3]
	\arrow["{\varphi_{\ell_n,j_n}}", from=1-3, to=1-4]
	\arrow["\tau", from=1-4, to=1-5]
    \end{tikzcd}\]
    Here, the first two maps are inclusions, and $\tau$ is the canonical map (note that $k$ depends on $\ell_n$ and $j_n$). Since $G_n$ is a free group on two generators, its commutator subgroup $G_n'$ is also free of countably infinite rank. Thus if $H_{n+1} := \ker \psi_n$, then $H_{n+1}$ must be a countably infinite free group, since $[G_n' : H_{n
    +1}] = |S_k| = k!$ is finite. Moreover, the rank of $H_{n+1}$ must be at least two; otherwise, $H_{n+1}$ would be contained as a finite index subgroup of a rank two subgroup of $G_{n}'$, which by the Nielsen-Schreier Theorem would imply that $[G_n' : H_{n+1}] = 0$, a contradiction. Thus, we can choose a rank two subgroup of $H_{n+1}$, and define $G_{n+1}$ to be this subgroup. Then $G_{n+1}$ is finitely generated, so the set $\bigcup_{k=1}^{\infty} \Hom(G_{n+1}, \cB_k)$ is countable, and we can choose an enumeration $(\varphi_{n+1,j})_{j=1}^{\infty}$ for this set. This completes the recursion. \par
    Now notice that we have constructed a sequence $G_1 \supseteq G_2 \supseteq G_3 \supseteq \cdots$ of nested subgroups, each of which is a free group on two generators (so property (1) is satisfied). Property (2) is also satisfied by construction, since $G_{n+1}$ is contained in the kernel of a map with domain $G_{n}'$. Finally, given a group homomorphism $\varphi : G_n \to \cB_k$ for some $n$ and some $k$, there is a positive integer $j$ such that $\varphi = \varphi_{n,j}$. Since every positive integer appears in the sequence $(\ell_i)$ infinitely often, there is some smallest positive integer $i_0$ such that $j = \#\{ i \in \mathbb{N} : 1 \leq i \leq i_0 : \ell_i = n\}$; note that $n = \ell_{i_0} \leq i_0$. Then by construction, at the $i_0^{\text{th}}$ step of the recursion, we produce a group $G_{i_0 + 1}$ which is contained in the kernel of the map $\tau \circ \varphi_{\ell_{i_0}, j_{i_0}} = \tau \circ \varphi_{n, j} = \tau \circ \varphi$, as desired. This shows property (3) is satisfied. \par
    This nested sequence of subgroups defines a pro-group $\ul{G}$ given by the inverse system of inclusions
    \[\begin{tikzcd}
	G_1 & G_2 & G_3 & \cdots
	\arrow[from=1-2, to=1-1]
	\arrow[from=1-3, to=1-2]
	\arrow[from=1-4, to=1-3]
    \end{tikzcd}\]
    such that, by property (2), each structure map factors through the commutator subgroup of its range. In particular, this implies that the abelianization of $\ul{G}$ is the trivial group. Moreover, property (3) guarantees that the pro-group $\ul{G}$ satisfies $(**_k)$, and hence $(*_k)$, for every $k$. Finally, applying Lemma \ref{lem:compactumWithGivenProgroup}, we can find a 1-dimensional pointed continuum $(X, x_0)$ having $\ppi_1(X,x_0) \cong \ul{G}$, which can be realized as an inverse limit of wedges of two circles. Then $\ppi_1(X, x_0)$ satisfies $(*_k)$ for every $k$, and it is non-abelian, since the abelianization of $\ul{G}$ is trivial (but $\ul{G}$ is non-trivial by Proposition \ref{prop:trivial_progroup_nested_subgroups}, since the structure maps are all inclusions of non-trivial subgroups). Moreover, because $\Ab(\ul{G}) = 0$, we have that $\ul{H}_1(X) \cong \Ab(\ppi_1(X, x_0)) \cong \Ab(\ul{G}) = 0$, so by the UCT,
    \[\check{H}^1(X) = \varinjlim \ul{H}^1(X) \cong \varinjlim \Hom(\ul{H}_1(X), \bZ) = 0\]
    and hence $X$ is acyclic. 
\end{example}

\section{Low-degree polynomials and braid groups}\label{sec:lowdegpolys}
Our next results concern using more easily computable invariants, such as the first \v{C}ech cohomology group $\check{H}^1(X)$ discussed above, and the homology pro-group $\ul{H}_1(X)$, in order to discern if low degree polynomials have continuous approximate solutions. For these results, we use Definition \ref{defn:divisibility_progroup} regarding $m$-divisibility for a pro-group.

\begin{lem}\label{lem:divisibility and solvability}
    Consider a pro-group $\ul{G}$ and its abelianization $\Pro(\Ab)(\ul{G})$. If $W$ is a solvable group of exponent $m$ and $\Pro(\Ab)(\ul{G})$ is $m$-divisible, then any morphism $\ul{\phi}: \ul{G} \to W$ is trivial.
\end{lem}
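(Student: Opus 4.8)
The plan is to induct on the derived length of $W$, using the abelian case as the engine and peeling off one term of the derived series at each step. Since subgroups of $W$ inherit an exponent dividing $m$ (though possibly not equal to $m$), I would prove the marginally more flexible statement in which $W$ is only assumed solvable of exponent dividing $m$; this costs nothing in the base case and is exactly what lets the induction close.

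For the base case I would take $W$ abelian of exponent dividing $m$ and a morphism $\ul{\phi} \colon \ul{G} \to W$. Because $W$ is abelian, $\ul{\phi}$ factors through the abelianization as $\bar{\phi} \colon \Pro(\Ab)(\ul{G}) \to W$; concretely, a representative $\phi_\alpha \colon G_\alpha \to W$ descends to $\bar{\phi}_\alpha \colon G_\alpha^{\mathrm{ab}} \to W$. I would then invoke $m$-divisibility of $\Pro(\Ab)(\ul{G})$: for this index $\alpha$ there is a $\beta \geq \alpha$ whose bonding map carries every element of $G_\beta^{\mathrm{ab}}$ to an $m$-th power $g^m$ in $G_\alpha^{\mathrm{ab}}$, whence $\bar{\phi}_\alpha(g^m) = \bar{\phi}_\alpha(g)^m = 1$ because $W$ has exponent dividing $m$. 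Thus the representative $\bar{\phi}_\alpha$ becomes trivial after composing with this single bonding map, so $\bar{\phi}$, and hence $\ul{\phi}$, is trivial.

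For the inductive step, let $W$ have derived length $r \geq 2$ and set $W' = [W,W]$, so that $W/W'$ is abelian of exponent dividing $m$ and $W'$ is solvable of derived length $r-1$ and exponent dividing $m$. Given $\ul{\phi} \colon \ul{G} \to W$, I would compose with the quotient $\pi \colon W \to W/W'$; the base case makes this composite trivial. Unwinding triviality in $\Pro(\Grp)$, some representative $\phi_\alpha$ of $\ul{\phi}$, after precomposition with a bonding map $p_\alpha^{\alpha'}$, satisfies $\pi \circ \phi_\alpha \circ p_\alpha^{\alpha'} = 1$, so that its image lies in $\ker \pi = W'$. Corestricting to $W'$ yields $\ul{\phi}' \colon \ul{G} \to W'$ with $\iota \circ \ul{\phi}' = \ul{\phi}$ for the inclusion $\iota \colon W' \hookrightarrow W$, and the induction hypothesis applied to $W'$ makes $\ul{\phi}'$ trivial; hence $\ul{\phi} = \iota \circ \ul{\phi}'$ is trivial.

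The only genuine content sits in the base case, where $m$-divisibility and the exponent hypothesis together annihilate the morphism; everything else is formal. The step demanding the most care is the bookkeeping in $\Pro(\Grp)$: a pro-morphism is a class in $\varinjlim_\alpha \Hom(G_\alpha, W)$, so triviality only guarantees that a representative dies after composition with \emph{some} bonding map, and I must check that corestricting the image along $\ker \pi$ is compatible with passing to the colimit. Carrying the weaker hypothesis ``exponent dividing $m$'' rather than exactly $m$ through the recursion is the small observation that legitimizes the descent down the derived series.
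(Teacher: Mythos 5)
Your proposal is correct and follows essentially the same route as the paper: both arguments induct on a series with abelian quotients (the paper uses a general subnormal series, you use the derived series), kill the top abelian quotient by factoring through the abelianization and invoking $m$-divisibility, and then corestrict to the smaller solvable subgroup to close the induction. Your explicit attention to carrying the hypothesis ``exponent dividing $m$'' through the recursion and to the colimit bookkeeping for pro-morphisms matches what the paper does implicitly.
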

\begin{proof}
    Since $W$ is a finite solvable group, it has a subnormal series
    \[1 = W_0 \trianglelefteq W_1 \trianglelefteq \cdots \trianglelefteq W_{n - 1} \trianglelefteq W_n = W\]
    such that $W_k/W_{k - 1}$ is abelian for each $1 \leq k$. We shall prove the statement by induction on the length $n$ of this series. If $n = 0$ the statement is trivial as $W$ is already the trivial group. \par
    Now assuming that the statement is true for groups with such a series of length $n - 1$, take $W$ to have such a series of length $n$. Then $W/W_{n - 1} = W_n/W_{n - 1}$ is an abelian group of exponent (dividing) $m$. Hence if
    \[\phi_\alpha: G_\alpha \to W\]
    represents $\ul{\phi}$, then the composition with the quotient $W \to W/W_{n - 1}$ factors through $\Ab(G_\alpha)$, and using the definition of $\Pro(\Ab)(\ul{G})$ being $m$-divisible we can find $\beta \geq \alpha$ so that the image of $\Ab(G_\beta)$ in $\Ab(G_\alpha)$ consists only of $m^{\text{th}}$ powers of elements in $\Ab(G_\alpha)$. Thus the composition
    \[\begin{tikzcd}
	{G_\beta} & {\Ab(G_\beta)} \\
	{G_\alpha} & {\Ab(G_\alpha)} & {W/W_{n - 1}}
	\arrow[from=1-1, to=1-2]
	\arrow[from=1-1, to=2-1]
	\arrow[from=1-2, to=2-2]
	\arrow["0", from=1-2, to=2-3]
	\arrow[from=2-1, to=2-2]
	\arrow[from=2-2, to=2-3]
    \end{tikzcd}\]
    is trivial. Hence the map from $G_\beta$ to $W/W_{n - 1}$ is trivial, and so $\phi_\beta: G_\beta \to W$ must actually have its image land in $W_{n - 1}$. But $W_{n - 1}$ has a shorter series and therefore the map $\ul{\phi}$ is trivial.
\end{proof}

\begin{thm}\label{thm:divisibility of homology pro-group and solvability}
    Consider a continuum $X$ and an integer $1 \leq n \leq 4$. If $\pH_1(X)$ is $n!$-divisible, then all polynomials $P: X \to B_n$ are completely solvable.
\end{thm}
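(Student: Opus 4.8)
The plan is to reduce to the solvability criterion of Theorem \ref{thm:progroup_char_solvability}(ii) and then feed everything into Lemma \ref{lem:divisibility and solvability}. By Theorem \ref{thm:progroup_char_solvability}(ii), every polynomial $P: X \to B_n$ is completely solvable if and only if $\ppi_1(X)$ satisfies $(**_n)$; that is, for every morphism $\ul{\phi}: \ppi_1(X) \to \cB_n$ the composite $\tau \circ \ul{\phi}: \ppi_1(X) \to S_n$ is trivial. Since $\tau \circ \ul{\phi}$ is just a morphism of pro-groups into $S_n$, it suffices to show that \emph{every} morphism $\ppi_1(X) \to S_n$ is trivial, and I would aim for this stronger statement.

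First I would observe that this is precisely the situation of Lemma \ref{lem:divisibility and solvability}, taking $\ul{G} = \ppi_1(X)$ and $W = S_n$, provided I can verify its two hypotheses: that $S_n$ is solvable of some exponent $m$, and that $\Pro(\Ab)(\ppi_1(X))$ is $m$-divisible. The restriction $1 \leq n \leq 4$ enters exactly here: $S_n$ is solvable for $n \leq 4$ (with the standard subnormal series $1 \trianglelefteq V_4 \trianglelefteq A_4 \trianglelefteq S_4$ realizing the worst case), but fails to be solvable for $n \geq 5$ since $A_5$ is simple and non-abelian. This is the real reason the theorem is stated only up to $n = 4$, and I expect identifying \emph{solvability} as the operative property to be the conceptual crux, even though it is short once spotted.

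Next I would pin down the exponent. The order of a permutation is the least common multiple of its cycle lengths, so the exponent of $S_n$ is $\op{lcm}(1,2,\ldots,n)$, and this divides $n!$ because $n!$ is a common multiple of $1,\ldots,n$. Now I must match the divisibility hypothesis: the theorem assumes $\pH_1(X)$ is $n!$-divisible, whereas Lemma \ref{lem:divisibility and solvability} wants $m$-divisibility for $m = \op{lcm}(1,\ldots,n)$. These fit together because $n!$-divisibility implies $m$-divisibility whenever $m \mid n!$: writing $n! = mt$, anything realized as an $n!$-th power $g^{n!} = (g^{t})^m$ is automatically an $m$-th power, and the same bookkeeping passes through the bonding maps in Definition \ref{defn:divisibility_progroup}. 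Here I would also record that $\pH_1(X) = \Pro(\Ab)(\ppi_1(X))$, since $H_1$ of each finite CW complex in a shape system is the abelianization of its $\pi_1$; thus the hypothesis on $\pH_1(X)$ is exactly the hypothesis that Lemma \ref{lem:divisibility and solvability} places on $\Pro(\Ab)(\ul{G})$.

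With both hypotheses in hand, Lemma \ref{lem:divisibility and solvability} yields that every morphism $\ppi_1(X) \to S_n$ is trivial; in particular $\tau \circ \ul{\phi}$ is trivial for every $\ul{\phi}: \ppi_1(X) \to \cB_n$, so $(**_n)$ holds and Theorem \ref{thm:progroup_char_solvability}(ii) closes the argument. The only genuine content beyond invoking the earlier machinery is the two arithmetic observations — solvability of $S_n$ for $n \leq 4$ and $\op{lcm}(1,\ldots,n) \mid n!$ — together with the elementary passage from $n!$-divisibility to $m$-divisibility.
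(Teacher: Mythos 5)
Your proof is correct and follows essentially the same route as the paper's: reduce to condition $(**_n)$ via Theorem \ref{thm:progroup_char_solvability}(ii), note that $S_n$ is solvable precisely because $n \leq 4$, and apply Lemma \ref{lem:divisibility and solvability} with $W = S_n$ and $\ul{G} = \ppi_1(X)$. Your extra care in distinguishing the exponent $\operatorname{lcm}(1,\ldots,n)$ of $S_n$ from its order $n!$, and checking that $n!$-divisibility implies $m$-divisibility for $m \mid n!$, is a small tidying of a point the paper glosses over, not a different argument.
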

\begin{proof}
    Picking a basepoint $x_0 \in X$, by Theorem \ref{thm:progroup_char_solvability} we need to show $\ppi_1(X, x_0)$ satisfies $(**_n)$, meaning that any $\ul{\phi}: \ppi_1(X, x_0) \to \cB_n$ is such that $\ul{\psi} = \tau \circ \ul{\phi}$ is trivial, where $\tau : \cB_n \to S_n$ denotes the canonical map. But since $n \leq 4$ we have that $\ul{\psi}: \ppi_1(X, x_0) \to S_n$ is a morphism into a solvable group of order $n!$, so By Lemma \ref{lem:divisibility and solvability} the abelianization $\pH_1(X)$ of $\ppi_1(X, x_0)$ being $n!$-divisible guarantees any morphism into $S_n$ is trivial.
\end{proof}

Next we show that for polynomials of degree less than $3$, we can replace divisibility of $\pH_1(X)$ with divisibility of $\check{H}^1(X)$, but not for degree $n = 4$ polynomials. For these proofs we need to know more about the structure of the braid group $\cB_n$. Recall that $\cB_1$ is defined to be the trivial group, while in general, the braid group on $n$ strands (for $n\geq 2$) has a presentation given by generators $\sigma_1, \dots, \sigma_{n - 1}$ and relations
\begin{align*}
    \sigma_i \sigma_j & = \sigma_j \sigma_i \quad \text{for} \ \text 1 \leq i \leq j - 2 \leq n - 3 \\
    \sigma_i \sigma_{i + 1} \sigma_i & = \sigma_{i + 1} \sigma_i \sigma_{i + 1} \quad \text{for} \ \text 1 \leq i \leq n - 2
\end{align*}
Thus $\cB_2$ has a single generator and no relations, so $\cB_2 \cong \bZ$. It is not too hard to see that these relationships imply that the abelianization $\Ab(\cB_n)$ is $\bZ$ with the abelianization homomorphism given by $\sigma_i \mapsto 1$ for each $1 \leq i \leq n - 1$. For our proofs we need to know a bit more about the derived series of $\cB_n$, which is given to us by \cite[Theorems 2.1, 2.6; Corollary 2.2]{GorinLin}:
\begin{itemize}
    \item \underline{For $\cB_3$:} the commutator subgroup $\cB_3'$ is free, generated by the two elements 
    \[u = \sigma_2 \sigma_1^{-1}, \quad v = \sigma_1 \sigma_2 \sigma_1^{-2}\]

    \item \underline{For $\cB_4$:} the commutator subgroup $\cB_4'$ has its presentation given by four generators
        \[u, \quad v, \quad a = \sigma_3 \sigma_1^{-1}, \quad b = uau^{-1}\]
    and relations
        \[uau^{-1} = b, \quad ubu^{-1} = b^2a^{-1}b, \quad vav^{-1} = a^{-1}b, \quad vbv^{-1} = (a^{-1}b)^3 a^{-2}b\]
    where $u$ and $v$ are as above. The additional relations
        \[u^{-1}au = ab^{-1}a^2, \quad u^{-1}bu = a, \quad v^{-1}av = ab^{-1}a^3, \quad v^{-1}bv = ab^{-1}a^4\]
    hold, making the subgroup $T \seq \cB_4'$ generated by $a$ and $b$ a normal subgroup. In fact $T$ is freely generated by $a$ and $b$, and the quotient $\cB_4'/T$ is a free group generated by the images of $u$ and $v$.

    \item \underline{For $\cB_n$ with $n \geq 5$:} the commutator subgroup $\cB_5'$ is perfect, meaning the second commutator subgroup $\cB_5''$ is equal to the first $\cB_5'$.
    
\end{itemize}

Recall from Section \ref{subsec:prelim_spaces_of_polys} that we have an action $\star: \bC^\times \times B_n \to B_n$ given by scaling roots of a polynomial at a point by a non-zero complex number. Then if we consider a continuum $X$, a degree $n$ polynomial $P: X \to B_n$, and a non-vanishing function $f: X \to \bC^\times$, we can define $f \star P$ by applying this action pointwise. Observe that finding solutions to $P$ and $f\star P$ are equivalent problems. In particular, if we can find a non-vanishing function $f$ such that the discriminant of $f\star P$ is constant and equal to $1$, then we can find solutions to $P$ by finding solutions to $f\star P$. This is useful, because if $f\star P :X \to B_n$ has constant discriminant equal to $1$, then its image lies in the subspace $B_n'$ of $B_n$, which has fundamental group $\cB_n'$ (see Section \ref{sec:Preliminaries}), so the induced map $\ppi_1 (f\star P) : \ppi_1(X,x_0) \to \cB_n$ on fundamental pro-groups factors through the inclusion $\cB_n' \to \cB_n$. This is helpful in the context of Corollary \ref{cor:polys solutions and pro-groups} for finding solutions to $f\star P$ (and hence solutions of $P$). \par 
To find such an $f$, recall that $\Delta(f \star P) = f^{n(n - 1)} \Delta(P)$, so we need to make $f$ an $(n(n-1))^{\text{th}}$ root of the function $\frac{1}{\Delta(P)} : X \to \mathbb{C}^\times$. Such an $f$ exists if $\check{H}^1(X)$ is $n(n-1)$-divisible, by Corollary \ref{cor:divisibility and mth roots}. \par

Using the above discussion and the shape-theoretic formalism, we can now give an alternative proof of \cite[Theorem 3.3]{GorinLin} which asserts that for quadratic and cubic polynomials, $2$- and $3$-divisibility of $\check{H}^1(X)$ is enough to find solutions.

\begin{thm}\label{thm:divisibility of cech cohomology implies solvability for degree 2 and 3}
    For $n = 2$ or $n = 3$, a continuum $X$ has an $n!$-divisible $\check{H}^1(X)$ if and only if any polynomial $P: X \to B_n$ is completely solvable.
\end{thm}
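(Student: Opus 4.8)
The plan is to reduce the whole statement to a question purely about the fundamental pro-group, and then to exploit how small $\cB_n$ is for $n\le 3$. By Theorem \ref{thm:progroup_char_solvability}(ii), complete solvability of every $P:X\to B_n$ is equivalent to $\ppi_1(X)$ satisfying $(**_n)$, while by Proposition \ref{prop:Divisibility of dual of progroup and subgroups}, $\check{H}^1(X)$ is $m$-divisible exactly when every morphism $\ppi_1(X)\to\bZ$ has a representative with image in $m\bZ$. The case $n=2$ is then immediate: since $\cB_2\cong\bZ$ and $\tau:\cB_2\to S_2$ is reduction modulo $2$ with $N_2=2\bZ$, the condition $(**_2)$ says precisely that every morphism $\ppi_1(X)\to\bZ$ lands in $2\bZ$, which is $2$-divisibility of $\check{H}^1(X)$. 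Both implications follow at once.

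For $n=3$ I would prove ``$6$-divisible $\Rightarrow$ completely solvable'' directly, using the scaling action $\star$ recalled just before the theorem. Given $P:X\to B_3$, the relation $\Delta(f\star P)=f^{6}\Delta(P)$ together with Corollary \ref{cor:divisibility and mth roots}, applied to the non-vanishing function $1/\Delta(P)$, produces from $6$-divisibility a non-vanishing $f$ with $f^{6}=1/\Delta(P)$; then $f\star P$ has constant discriminant $1$ and so factors through $B_3'$, whence $\ppi_1(f\star P)$ has image in $\cB_3'=\pi_1(B_3')$. The structural input is that $\cB_3'$ is free on two generators, so its abelianization is $\bZ^2$ and $\tau$ carries $\cB_3'$ onto $A_3\cong\bZ/3$. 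Thus $\tau\circ\ppi_1(f\star P)$ factors as $\ppi_1(X)\to\bZ^2\to\bZ/3$; by $3$-divisibility the morphism $\ppi_1(X)\to\bZ^2$ has, after refining indices, a representative with image in $3\bZ^2$, and any homomorphism $\bZ^2\to\bZ/3$ kills $3\bZ^2$. Hence $\tau\circ\ppi_1(f\star P)$ is trivial, so by Corollary \ref{cor:polys solutions and pro-groups}(ii) the polynomial $f\star P$, and therefore $P$, is completely solvable.

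For the converse when $n=3$ I would instead feed carefully chosen maps into $(**_3)$, which holds by Theorem \ref{thm:progroup_char_solvability}(ii). Given a morphism $g:\ppi_1(X)\to\bZ$, postcompose with the homomorphisms $\iota_1,\iota_2:\bZ\to\cB_3$ sending the generator to $\sigma_1$ and to $\sigma_1\sigma_2$ respectively. Since $\tau(\sigma_1)=(1\,2)$ has order $2$ and $\tau(\sigma_1\sigma_2)$ is a $3$-cycle, triviality of $\tau\circ(\iota_1\circ g)$ and of $\tau\circ(\iota_2\circ g)$ forces $g$ to have a representative landing in $2\bZ$ and one landing in $3\bZ$; refining to a common index yields a representative with image in $2\bZ\cap 3\bZ=6\bZ$, so $\check{H}^1(X)$ is $6$-divisible by Proposition \ref{prop:Divisibility of dual of progroup and subgroups}.

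The crux of the argument, and the step I expect to require the most care, is the freeness of $\cB_3'$ used in the forward direction: it is exactly what lets a $\bZ/3$-valued map factor through a free abelian group and hence be detected by $\bZ$-valued (that is, $\check{H}^1$) data. This is precisely the feature that breaks down for $\cB_4'$, and I would expect the next result to exploit this to show that divisibility of $\check{H}^1(X)$ is insufficient at degree $4$. The only remaining difficulty is the pro-category bookkeeping---passing between a morphism of pro-groups and its representatives, and refining indices when combining the $2$- and $3$-divisibility conclusions---which should be routine given Proposition \ref{prop:Divisibility of dual of progroup and subgroups}.
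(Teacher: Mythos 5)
Your proof is correct, and the substantive direction --- $6$-divisibility implies complete solvability for $n=3$, via the scaling action $\star$ together with Corollary \ref{cor:divisibility and mth roots} to push the image of $\ppi_1(P)$ into $\cB_3'$, and then the factorization of $\tau|_{\cB_3'}$ through $\Ab(\cB_3')\cong\bZ^2 \to A_3$ killed by $3$-divisibility --- is exactly the paper's argument. You differ only in the easy directions. For $n=2$ the paper again uses the scaling trick to reduce to $\cB_2'=1$, whereas you identify $N_2=2\bZ$ inside $\cB_2\cong\bZ$ and read off the equivalence with $2$-divisibility directly from Proposition \ref{prop:Divisibility of dual of progroup and subgroups}; both work. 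For the converse at $n=3$ the paper just cites Corollary \ref{cor:divisibility and mth roots}, which immediately gives $3$-divisibility from solving $z^3-f$ but leaves implicit how $2$-divisibility is extracted from degree-$3$ data (one must also solve polynomials such as $z^3-f(x)z$, whose monodromy is carried by powers of $\sigma_1$). Your device of testing $(**_3)$ against the homomorphisms $\bZ\to\cB_3$ sending the generator to $\sigma_1$ and to $\sigma_1\sigma_2$ is precisely the group-theoretic shadow of those two families of polynomials, and it makes that step explicit; the remaining bookkeeping --- refining to a common index to land in $2\bZ\cap 3\bZ=6\bZ$ --- is handled correctly.
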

\begin{proof}
    For the backwards direction, we can get $2$- or $3$-divisibility of $\check{H}^1(X)$ by using the fact that we can find roots of non-vanishing functions (Corollary \ref{cor:divisibility and mth roots}), so let's look at the forwards direction. \par
    
    The case $n = 2$ is simple. As we argued above, $\check{H}^1(X)$ being $2$-divisible ensures that we only need to worry about morphisms $\ul{\phi}: \ppi_1(X, x_0) \to \cB_2$ which factor through $0 = \cB_2' \to \cB_2$ which means $\ul{\phi}$ is the zero morphism. These definitely factor through $N_2$, so $\ppi_1(X, x_0)$ satisfies $(**_2)$. \par 
    The case $n = 3$ is trickier. Consider some $\ul{\phi}: \ppi_1(X, x_0) \to \cB_3$, which we may similarly assume factors through $\cB_3' \to \cB_3$, as $\check{H}^1(X)$ is $6$-divisible (we'll also use $\ul{\phi}$ for the factoring map $\ppi_1(X, x_0) \to \cB_3'$). Since the image of the commutator subgroup $\cB_3'$ under the canonical map $\tau: \cB_3 \to S_3$ lies in the abelian subgroup $S_3' = A_3 \cong \mathbb{Z}/3\mathbb{Z}$, it follows that $\tau|_{\cB_3'}$ factors through the abelianization of $\cB_3'$. But by the discussion above, $\cB_3'$ is freely generated by two elements, so the abelianization of $\cB_3'$ is $\mathbb{Z}^2$. In total we get the diagram:
    \[\begin{tikzcd}
	& {3\bZ^2} \\
	{\ppi_1(X, x_0)} & {\bZ^2} & {A_3 \cong \bZ/3\bZ} \\
	& {\cB_3'}
	\arrow[hook, from=1-2, to=2-2]
	\arrow["0", from=1-2, to=2-3]
	\arrow[dashed, from=2-1, to=1-2]
	\arrow["{\ul{\phi}}", from=2-1, to=3-2]
	\arrow[from=2-2, to=2-3]
	\arrow[from=3-2, to=2-2]
	\arrow["{\tau|_{\cB_3'}}"', from=3-2, to=2-3]
    \end{tikzcd}\]
    By our assumption that the dual group
    \[\check{H}^1(X) = \Hom(\ppi_1(X, x_0), \bZ)\]
    is $3$-divisible, by Proposition \ref{prop:Divisibility of dual of progroup and subgroups} we see that the the map $\ppi_1(X, x_0) \to \bZ^2$ given by $\ul{\phi}$ and then abelianization, factors through the inclusion $3\bZ^2 \to \bZ^2$. Therefore $\tau \circ \ul{\phi}$ is the trivial map, which in total shows that $\ppi_1(X, x_0)$ satisfies $(**_3)$ as required.
\end{proof}

Here are some counterexamples in the cases $n = 4$ and $n \geq 5$. First the simpler case, $n \geq 5$ when $A_n$ is not solvable (and moreover perfect).

\begin{lem}\label{lem:divisibility of cech cohomology counterexample for degree >= 5}
    Given any integer $n \geq 5$ there exists a pro-group $\ul{G^n}$ given by an inverse system of free groups on two generators
    \[\begin{tikzcd}
	{\bZ * \bZ} & {\bZ * \bZ} & {\bZ * \bZ} & \cdots
	\arrow["f_n"', from=1-2, to=1-1]
	\arrow["f_n"', from=1-3, to=1-2]
	\arrow[from=1-4, to=1-3]
    \end{tikzcd}\]
    that does not satisfy $(*_n)$, but has $\Pro(Ab)(\ul{G^n}) = 0$.
\end{lem}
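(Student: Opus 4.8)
The plan is to realize $\ul{G^n}$ as a \emph{constant} inverse system, with every group equal to $F := \bZ * \bZ$ and every bonding map equal to a single endomorphism $f_n : F \to F$, and then to exhibit one morphism $\ul{\phi} : \ul{G^n} \to \cB_n$ whose image stays ``spread out'' no matter how far down the tower we travel. The two requirements pull against each other: to force $\Ab(\ul{G^n}) = 0$ I want $f_n$ to land inside the commutator subgroup $F'$ (so that the abelianized system $\bZ^2 \xleftarrow{0} \bZ^2 \xleftarrow{0} \cdots$ is pro-trivial), but landing in $F'$ a priori shrinks the image of $\ul{\phi}$, and I must prevent that shrinkage from ever trapping the image inside some $M_{n,i}$. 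The fact that makes $n \geq 5$ work is that the alternating group $A_n$ is \emph{perfect} and \emph{$2$-generated}, together with the fact that $A_n$ is transitive on $\{1, \dots, n\}$.

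Concretely, I would first fix generators $s, t$ of $A_n$, lift them to braids $\beta_s, \beta_t \in \cB_n$ (using surjectivity of $\tau$), and define $\phi : F \to \cB_n$ on the free generators $x, y$ by $\phi(x) = \beta_s$ and $\phi(y) = \beta_t$; then $\tau \circ \phi : F \to S_n$ surjects onto $A_n$. The key step uses perfectness: since $A_n' = A_n$, one has $(\tau\phi)(F') = (\tau\phi(F))' = A_n$, so I can choose words $w_x, w_y \in F'$ with $(\tau\phi)(w_x) = s$ and $(\tau\phi)(w_y) = t$. Defining $f_n : F \to F$ by $f_n(x) = w_x$ and $f_n(y) = w_y$ gives an endomorphism with $f_n(F) \subseteq F'$, hence $\Ab(f_n) = 0$ and $\Ab(\ul{G^n}) = 0$; and by construction $\tau\phi \circ f_n$ agrees with $\tau\phi$ on the generators $x, y$, so $\tau \circ \phi \circ f_n = \tau \circ \phi$ as homomorphisms $F \to S_n$, and therefore $\tau \circ \phi \circ f_n^{k} = \tau \circ \phi$ for every $k \geq 0$.

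Letting $\ul{\phi} : \ul{G^n} \to \cB_n$ be the morphism represented by $\phi$, I would finish by unwinding what ``the image of $\ul{\phi}$ lies in $M_{n,i}$'' means in the pro-category: since $\cB_n$ is a constant object, $\ul{\phi}$ is an element of $\varinjlim_k \Hom(F, \cB_n)$ with level-$k$ representative $\phi \circ f_n^{k}$, and $\ul{\phi}$ factors through $M_{n,i} \hookrightarrow \cB_n$ if and only if $\im(\phi \circ f_n^{k}) \subseteq M_{n,i}$ for some $k$. But $\tau(\im(\phi \circ f_n^{k})) = \tau\phi(F) = A_n$ for all $k$ by the previous paragraph, while $\tau(M_{n,i})$ is contained in the point-stabilizer $\mathrm{Stab}_{S_n}(i)$, which cannot contain the transitive group $A_n$. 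Hence no tower-representative lands in any $M_{n,i}$, so $\ul{\phi}$ witnesses the failure of $(*_n)$.

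The main obstacle, and the place the argument would break for $n \leq 4$, is guaranteeing that the image cannot be trapped in a point-stabilizer after descending arbitrarily far down the tower. The self-reproducing identity $\tau\phi \circ f_n = \tau\phi$ is what resolves this, and it is available precisely because perfectness of $A_n$ lets the two generating permutations be hit by commutator words $w_x, w_y \in F'$; for $n \leq 4$ the group $A_n$ is not perfect, so $(\tau\phi)(F')$ is a proper subgroup and this device is unavailable. A secondary point requiring care is the correct pro-categorical reading of $(*_n)$: membership in some $M_{n,i}$ is tested not by $\phi$ alone but by all of its representatives $\phi \circ f_n^{k}$, and the identity above is exactly what controls every one of them simultaneously.
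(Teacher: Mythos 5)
Your proposal is correct and follows essentially the same route as the paper: both use that $A_n$ is perfect and $2$-generated to produce words in the commutator subgroup of $\bZ * \bZ$ mapping onto the generators of $A_n$, define $f_n$ by sending the free generators to those words (forcing $\Ab(f_n)=0$ while keeping $\tau\circ\phi\circ f_n = \tau\circ\phi$), and then rule out factorization through any $M_{n,i}$ by noting every representative $\phi\circ f_n^{\circ k}$ has $\tau$-image equal to the transitive group $A_n$. The only cosmetic difference is that the paper lifts $\psi$ along $\tau|_{\cB_n'}:\cB_n'\to A_n$ so that $\phi$ lands in $\cB_n'$, whereas you lift along $\tau$ itself; this is immaterial for the lemma.
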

\begin{proof}
    For $n \geq 5$ we know that the group $A_n$ is perfect and is generated by two elements $a, b \in A_n$. In particular we have a surjective morphism $\psi: \bZ * \bZ \to A_n$ by sending the free generators $x, y$ of $\bZ * \bZ$ to $a$ and $b$ respectively. But now we get a commutative diagram
    \[\begin{tikzcd}
	{\bZ * \bZ} & {(\bZ * \bZ)'} \\
	{A_n} & {A_n'}
	\arrow["\psi"', from=1-1, to=2-1]
	\arrow[from=1-2, to=1-1]
	\arrow[from=1-2, to=2-2]
	\arrow["\id"', from=2-2, to=2-1]
    \end{tikzcd}\]
    obtained by restricting to the commutator subgroups, where the bottom map is the identity (as $A_n'$ is all of $A_n$). Therefore the restriction of $\psi$ to $(\bZ * \bZ)'$ is still surjective, so we can find $x', y' \in (\bZ * \bZ)'$ such that $\psi(x') = a$ and $\psi(y') = b$. We take $f_n: \bZ * \bZ \to \bZ * \bZ$ to be the endomorphism sending $x$ to $x'$ and $y$ to $y'$. Then by construction $\psi \circ f_n = \psi$, and hence $\psi \circ f_n^{\circ m} = \psi$ for any natural number $m$.\par 

    To define $\phi$ we lift $\psi: \bZ * \bZ \to A_n$ along the natural map $\tau|_{\cB_n'}: \cB_n' \to A_n$, which is possible as $\tau$ (and hence $\tau|_{\cB_n'}$) is surjective and $\bZ * \bZ$ is free. Now we take $\ul{G^n}$ to be the inverse system where all of the structure maps are $f_n$, and take the morphism $\ul{\phi}: \ul{G^n} \to \cB_n' \seq \cB_n$ given by $\phi$ from the first object of the inverse system. \par
    If $\ul{\phi}$ factored through some $M_{n,i} \to \cB_n$, then we should be able to factor $\phi$ through $M_{n,i}$ by going far enough up the inverse system, meaning $\phi \circ f_n^{\circ m}$ would have to have its image in $M_{n,i}$ for some large enough $m$. However $M_{n,i}$ is exactly the preimage under $\tau$ of the subgroup $S_{n - 1}$ of $S_n$ consisting of permutations that fix the $i$th element, so it is enough to check that $\tau \circ \phi \circ f_n^{\circ m}$ never lies in these subgroups. But
    \[\tau \circ \phi \circ f_n^{\circ m} = \psi \circ f_n^{\circ m} = \psi\]
    which has an image of $A_n$, and no element is fixed by all permutations in this subgroup. Therefore $\ul{G^n}$ does not satisfy $(*_n)$. \par
    Lastly we need to argue that $\Pro(\Ab)(\ul{G^n}) = 0$, but this is immediate as $f_n: \bZ * \bZ \to \bZ * \bZ$ has its image landing in the commutator subgroup, so $f_n$ becomes the zero map after abelianizing and hence the inverse system consists of a sequence of zero maps.
\end{proof}

\begin{example}\label{ex:zero cech cohomology with poly of deg >= 5 that has no solutions}
As in Lemma \ref{lem:compactumWithGivenProgroup} we can find a $1$-dimensional pointed continuum $(X, x_0)$ which has $\ppi_1(X, x_0) \cong \ul{G^n}$ from Lemma \ref{lem:divisibility of cech cohomology counterexample for degree >= 5} by taking the inverse limit of wedges of two circles
\[\begin{tikzcd}
	{S^1 \vee S^1} & {S^1 \vee S^1} & {S^1 \vee S^1} & \cdots
	\arrow["{F_n}"', from=1-2, to=1-1]
	\arrow["{F_n}"', from=1-3, to=1-2]
	\arrow[from=1-4, to=1-3]
\end{tikzcd}\]
using a pointed map $F_n$ that realizes the morphism $f_n$. This continuum $X$ carries a polynomial $P: X \to B_n$ with no solutions by Theorem \ref{thm:two_dim_continuum_noexactroot}, because $\ppi_1(X, x_0)$ does not satisfy $(*_n)$. On the other hand, this $X$ has $\ul{H}_1(X) \cong \Pro(\Ab)(\ppi_1(X, x_0)) \cong \Pro(\Ab)(\ul{G^n}) = 0$ so by the UCT,
\[\check{H}^1(X) = \varinjlim \ul{H}^1(X) \cong \varinjlim \Hom(\ul{H}_1(X), \bZ) = 0\]
showing in particular that $\check{H}^1(X)$ is divisible. Note that by \cite[Theorem 1.3]{KawamuraMiura}, this continuum $X$ is an example of a continuum that admits approximate $m^{\text{th}}$ roots for every $m \geq 1$, but does not admit approximate continuous solutions to some degree $n$ polynomial. 

\end{example}

\begin{rem}
    Note that an acyclic one-dimensional continuum will always satisfy $(*_4)$. Indeed, if $X$ is one-dimensional, then its shape is given by an inverse sequence of wedges of circles, and hence the homology pro-group $\ul{H}_1(X)$ is an inverse limit of direct sums of copies of $\mathbb{Z}$. If additionally $X$ is acyclic, then  by the UCT, $\varinjlim \Hom(\ul{H}_1(X), \mathbb{Z}) = \check{H}^1(X) = 0$, which implies that $\ul{H}_1(X)$ is $m$-divisible for all $m \geq 1$. In particular, it is $4!$-divisible, and hence Theorem \ref{thm:divisibility of homology pro-group and solvability} applies. 
    
\end{rem}

We now move to the more difficult case of find an acyclic two-dimensional continuum whose fundamental pro-group does not satisfy $(*_4)$. This shows that \cite[Theorem 3.1]{GorinLin} is not true when we remove the hypothesis that the continuum under consideration is a finite cell complex. This improves upon \cite[Theorem 3.4]{GorinLin}, which shows that there exists a two-dimensional continuum whose fundamental pro-group does not satisfy $(*_4)$ but whose first \v{C}ech cohomology group is divisible. 

We start with a technical lemma that will help us build the appropriate pro-group.

\begin{lem}\label{lem:action of free subgroups of sl2Z}
    Consider two elements $U, V \in \SL_2(\bZ)$. The subgroup they generate in $\SL_2(\bZ)$ is freely generated by them if and only if their images in $\PSL_2(\bZ)$ also freely generate a subgroup. In this case, the sum of the ranges
    \[\Im(U - \id) + \Im(V - \id) \seq \bZ^2\]
    is a rank $2$ subgroup.
\end{lem}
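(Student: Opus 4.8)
The plan is to treat the two assertions separately: first the equivalence between free generation in $\SL_2(\bZ)$ and in $\PSL_2(\bZ)$, and then the rank-$2$ conclusion, which I only need to establish under the (now unambiguous) hypothesis of free generation. The organizing principle throughout is that the quotient $q\colon \SL_2(\bZ) \to \PSL_2(\bZ)$ has kernel the central subgroup $\{\pm\id\}$, generated by the order-$2$ element $-\id$.

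For the equivalence I would argue as follows. For the forward direction, if $U,V$ freely generate then $\langle U,V\rangle \cong F_2$ is torsion-free, so it cannot contain the order-$2$ element $-\id$; hence $q$ restricts to an injection on $\langle U,V\rangle$, identifying it isomorphically with $\langle \bar U,\bar V\rangle$ via $U\mapsto\bar U$, $V\mapsto \bar V$, so the images freely generate. For the converse, the free-generation homomorphism $F_2 \to \PSL_2(\bZ)$ sending the generators to $\bar U,\bar V$ is injective and factors as $F_2 \twoheadrightarrow \langle U,V\rangle \xrightarrow{q} \langle\bar U,\bar V\rangle$; injectivity of the composite forces the first surjection to be injective, so $U,V$ freely generate.

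For the rank statement, assume $U,V$ freely generate; in particular $U\neq\id$ and $V\neq\id$, so $U-\id$ and $V-\id$ are nonzero. The main computation is the identity $\det(A-\id) = 2 - \op{tr}(A)$ for $A\in\SL_2(\bZ)$, so $A-\id$ is invertible over $\bQ$ exactly when $\op{tr}(A)\neq 2$. Thus if either $\op{tr}(U)\neq 2$ or $\op{tr}(V)\neq 2$, the corresponding image $\Im(U-\id)$ or $\Im(V-\id)$ is already a rank-$2$ subgroup of $\bZ^2$, and the sum has rank $2$. The only remaining case is when both $U$ and $V$ are unipotent ($\op{tr}=2$, $\neq\id$), where $(A-\id)^2=0$ gives $\Im(A-\id)=\ker(A-\id)$, a rational line fixed pointwise by $A$.

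The crux is then to rule out that these two lines coincide. If $\Im(U-\id)=\Im(V-\id)=:L$, then both $U$ and $V$ fix $L$ pointwise; choosing a primitive integer vector spanning the line $\bQ L$ and completing it to a $\bZ$-basis, conjugation by the resulting matrix in $\GL_2(\bZ)$ carries $U$ and $V$ simultaneously to upper-triangular unipotent matrices $\left(\begin{smallmatrix}1&*\\0&1\end{smallmatrix}\right)$, which commute. Then $\langle U,V\rangle$ would be abelian, contradicting free generation of $F_2$. Hence the eigenlines are distinct and $\Im(U-\id)+\Im(V-\id)$ has rank $2$. I expect this final commutativity observation to be the real heart of the argument; everything else is bookkeeping with the central involution $-\id$ and the trace–determinant identity.
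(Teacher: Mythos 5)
Your proof is correct and follows essentially the same route as the paper: the equivalence is handled identically via the torsion-freeness of $F_2$ and the central kernel $\{\pm\id\}$, and in both arguments the rank statement comes down to the observation that a common rational eigenline would make $U$ and $V$ simultaneously conjugate to commuting (projectively) unitriangular matrices, contradicting freeness. The only difference is organizational: your identity $\det(A-\id)=2-\op{tr}(A)$ isolates the unipotent case up front and lets you derive the commutativity contradiction directly in $\SL_2(\bZ)$, whereas the paper argues from a common eigenvector $w_1$ and passes to $\PSL_2(\bZ)$, leaving implicit the fact that the integer eigenvalue on $w_1$ must be $\pm 1$.
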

\begin{proof}
    Let us denote the quotient map by $f:\SL_2(\bZ) \to \PSL_2(\bZ)$, which has a kernel of $\{\pm \id\}$. Let's start with the equivalence between generating a free subgroup of $\SL_2(\bZ)$ and $\PSL_2(\bZ)$. \par
    If $U, V$ are free generators of their subgroup $\sgen{U, V}$, then we cannot have $-\id$ as an element of $\sgen{U, V}$ as this subgroup is torsion-free. Therefore $\sgen{U, V} \cap \ker(f) = \{\id\}$ meaning that $f$ restricts to be injective on $\sgen{U, V}$ as required. Conversely, if $f(U), f(V)$ freely generate a subgroup in $\PSL_2(\bZ)$ it cannot be that there are any relations between $U$ and $V$, as these would give relations between $f(U)$ and $f(V)$. \par
    
    Next we show that under these conditions, the sum of the ranges $\Im(U - \id) + \Im(V - \id) \seq \bZ^2$ is a rank $2$ subgroup. First note that it cannot be that $U = \id$ or $V = \id$ if they freely generate a subgroup, therefore $\Im(U - \id)$ and $\Im(V - \id)$ are both at least rank $1$. Then if $\Im(U - \id) + \Im(V - \id)$ is not rank $2$, it must be that $\Im(U - \id)$ and $\Im(V - \id)$ are subsets of a common $\bZ w_1$, where by taking out common factors we may assume $\bZ w_1 = (\bQ w_1) \cap \bZ^2$. In particular $w_1$ is an eigenvector for both $U$ and $V$. \par
    Completing $w_1$ to a basis $\{w_1, w_2\}$ for $\bZ^2$, we see that in this basis $f(U)$ and $f(V)$ are matrices of the form
    \[\begin{bmatrix} 1 & * \\ 0 & 1\end{bmatrix}\]
    inside of $\PSL_2(\bZ)$ which commute, and hence cannot freely generate a subgroup.
\end{proof}

\begin{lem}\label{lem:cech cohomology counterexample for degree = 4}
    There exists a nested sequence of subgroups
    \[\cB_4' = G_0 \supseteq G_1 \supseteq G_2 \supseteq\cdots \]
    such that the pro-group $\ul{G}$ given by the inverse system of inclusions
    \[\begin{tikzcd}
	G_0 & G_1 & G_2 & \cdots
	\arrow[from=1-2, to=1-1]
	\arrow[from=1-3, to=1-2]
	\arrow[from=1-4, to=1-3]
    \end{tikzcd}\]
    does not satisfy $(*_4)$, but has trivial dual group $\varinjlim \Hom(\ul{G}, \bZ) = 0$.
\end{lem}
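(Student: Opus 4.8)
The plan is to realize the desired inverse system inside the extension structure of $\cB_4'$ described above. Recall that $T = \sgen{a,b} \trianglelefteq \cB_4'$ is free of rank two, that the quotient $\pi : \cB_4' \to \cB_4'/T$ is the free group $F_2 = \sgen{\bar u, \bar v}$, and that the conjugation action of $F_2$ on $\Ab(T) \cong \bZ^2$ is given by two matrices $U, V \in \SL_2(\bZ)$ obtained by abelianizing the relations $uau^{-1}=b$, $ubu^{-1}=b^2a^{-1}b$, $vav^{-1}=a^{-1}b$, $vbv^{-1}=(a^{-1}b)^3a^{-2}b$. First I would record two elementary computations. Applying $\tau$ to $a=\sigma_3\sigma_1^{-1}$ and $b=uau^{-1}$ gives $\tau(a)=(12)(34)$ and $\tau(b)=(13)(24)$, so $\tau(T)=\sgen{(12)(34),(13)(24)}$ is the Klein four-group $V_4$, which acts on $\{1,2,3,4\}$ without fixed points. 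And the matrices $U,V$ have distinct eigenlines, so $\sgen{U,V}$ is a non-elementary subgroup of $\SL_2(\bZ)$ (it contains a free subgroup of rank two, as can be checked by passing to $\PSL_2(\bZ)$ exactly as in Lemma \ref{lem:action of free subgroups of sl2Z}).

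The construction produces a nested sequence $F_2 = Q_0 \supseteq Q_1 \supseteq \cdots$ and sets $G_m := \pi^{-1}(Q_m)$; this forces $T \le G_m$ for all $m$ and $G_0 = \pi^{-1}(F_2) = \cB_4'$. I would build the $Q_m$ recursively so that, writing $\rho : F_2 \to \SL_2(\bZ)$ for the action map, two properties hold: (i) $Q_{m+1} \le Q_m'$, and (ii) $\rho(Q_m)$ is non-elementary, hence contains two elements freely generating a free subgroup. The recursion is fuelled by the fact that a nontrivial normal subgroup of a non-elementary subgroup of $\SL_2(\bZ)$ is again non-elementary: given $Q_m$ with $\rho(Q_m)$ non-elementary, the image $\rho(Q_m')=\rho(Q_m)'$ is non-elementary, so I may pick two elements of $\rho(Q_m')$ freely generating a free group, lift them to $Q_m'$, and let $Q_{m+1}$ be the subgroup they generate. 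Then $Q_{m+1}$ is free of rank two with $\rho(Q_{m+1})$ free of rank two, and satisfies (i) and (ii); the base case $Q_1 \le F_2'=Q_0'$ uses that $\sgen{U,V}'$ is non-elementary. In particular each $G_m$ is finitely generated (an extension of the finitely generated $Q_m$ by the rank-two group $T$), so $\ul G$ lands in $\Pro(\Grp^{\mathrm{fp}})$.

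With the $Q_m$ in hand I would verify the two conclusions. For the failure of $(*_4)$: the morphism $\ul\phi : \ul G \to \cB_4$ represented by the inclusion $G_0=\cB_4' \hookrightarrow \cB_4$ has image inside some $M_{4,i}$ only if some $G_m$ maps into $M_{4,i}$, i.e. only if $\tau(G_m)$ fixes the point $i$; but $\tau(G_m) \supseteq \tau(T) = V_4$ fixes no point, so no $M_{4,i}$ works and $\ul G$ fails $(*_4)$. For triviality of the dual group, take any $\chi : G_m \to \bZ$. Since inner automorphisms act trivially on $\Ab(T)$, the conjugation action of $G_m$ on $\Ab(T)\cong\bZ^2$ factors through $Q_m$, and $\chi|_T$ induces a $Q_m$-invariant functional $\lambda$ on $\bZ^2$; thus $\lambda$ annihilates $\Im(\rho(g)-\id)$ for every $g \in Q_m$. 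Choosing $g,h \in Q_m$ with $\rho(g),\rho(h)$ freely generating and invoking Lemma \ref{lem:action of free subgroups of sl2Z}, the subgroup $\Im(\rho(g)-\id)+\Im(\rho(h)-\id)$ has rank two and hence finite index, forcing $\lambda = 0$, so $\chi|_T=0$. Then $\chi$ factors through $Q_m=G_m/T$, and because $Q_{m+1}\le Q_m'$ the restriction $\chi|_{G_{m+1}}$ is trivial; hence every class in $\varinjlim \Hom(\ul G, \bZ)$ vanishes.

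The main obstacle is step (ii) of the recursion: keeping $\rho(Q_m)$ non-elementary at every level while simultaneously descending into the commutator subgroups needed to kill $\Hom(\ul G,\bZ)$. These requirements pull in opposite directions, since passing to commutators shrinks the image, and the argument only closes because the relevant images are non-elementary subgroups of $\SL_2(\bZ)$, whose nontrivial normal subgroups stay non-elementary. Lemma \ref{lem:action of free subgroups of sl2Z} is exactly the tool that converts ``non-elementary image'' into the finite-index span defeating every invariant functional. A secondary point to nail down is the bookkeeping computation $\tau(T)=V_4$ together with the non-elementarity of $\sgen{U,V}$, which anchors both the failure of $(*_4)$ and the base case of the recursion.
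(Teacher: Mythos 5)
Your proposal is correct, and it shares the paper's overall architecture: work inside the extension $T \trianglelefteq \cB_4' \to \cB_4'/T \cong \bZ * \bZ$, keep $T$ inside every $G_m$ so that $\tau(G_m)$ contains the fixed-point-free Klein four-group $\sgen{(12)(34),(13)(24)}$ and $(*_4)$ fails, and descend into commutator subgroups on the quotient side while keeping the image of the conjugation action on $\Ab(T)\cong\bZ^2$ large enough that Lemma \ref{lem:action of free subgroups of sl2Z} kills every invariant functional, hence every element of $\varinjlim\Hom(\ul{G},\bZ)$. Where you genuinely diverge is in how the descent is carried out. The paper writes down explicit elements $u_n, v_n$ by iterating the words $xyx^{-1}y^{-1}$ and $xy^2x^{-1}y^{-2}$, and then must prove that the matrices $U=\alpha(u)$ and $V=\alpha(v)$ freely generate a subgroup of $\SL_2(\bZ)$; this occupies the bulk of the paper's proof and is done by a covering-space computation identifying $\sgen{U,V}$ with the commutator subgroup of $\PSL_2(\bZ)$. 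You instead run a soft, non-constructive recursion: $\rho(Q_m)$ non-elementary implies $\rho(Q_m')=\rho(Q_m)'$ is non-elementary, hence contains a freely generating pair, which you lift (using Hopficity of $F_2$ to see the lifted pair is free) to define $Q_{m+1}$. This only requires $\sgen{U,V}$ to be non-elementary rather than free on $U,V$, which is far cheaper to verify, at the price of importing the Tits alternative (or a ping-pong argument) and the fact that an infinite normal subgroup of a non-elementary discrete subgroup of $\SL_2(\bZ)$ is non-elementary, and at the price of losing the explicit description of the $G_m$. Two small points to tighten: the parenthetical attribution of ``non-elementary implies contains a free subgroup of rank two'' to Lemma \ref{lem:action of free subgroups of sl2Z} is misplaced --- that lemma only converts freeness in $\SL_2(\bZ)$ into freeness in $\PSL_2(\bZ)$ and supplies the rank-two span, while the free pair itself must come from ping-pong --- and you should record that $U$ and $V$ both have trace $3$, hence are hyperbolic, before concluding non-elementarity from their having distinct eigenlines.
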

\begin{proof}
    Recall that $\cB_4'$ is generated by four elements: $u, v, a, b$, such that $a$ and $b$ freely generate a normal subgroup $T$ of $\cB_4'$. The elements $u$ and $v$ also freely generate a subgroup $J$. We define the pro-group $\ul{G}$ by picking an injection $\bZ * \bZ \to (\bZ * \bZ)'$ such as the subgroup generated by $xyx^{-1}y^{-1}$ and $xy^2x^{-1}y^{-2}$, with which we can recursively define
    \begin{align*}
    u_0 = u \quad & v_0 = v \\
    u_n = u_{n - 1}v_{n - 1}u_{n - 1}^{-1}v_{n - 1}^{-1} \quad & v_n = u_{n - 1}v_{n - 1}^2u_{n - 1}^{-1}v_{n - 1}^{-2} \quad \text{for} \ n \geq 1
    \end{align*}
    to get the subgroups $J_n = \sgen{u_n, v_n} \seq J$ and $G_n = TJ_n \seq \cB_4'$.  \par

    First note that the standard inclusions $G_n \hookrightarrow \cB_4$ induce a morphism $\ul{\phi}: \ul{G} \to \cB_4$ which do not factor through $M_{4, i} \to \cB_4$. This is because all of the subgroups $G_n$ contain $T = \sgen{a, b} = \sgen{\sigma_3 \sigma_1^{-1}, \sigma_2 \sigma_1^{-1}\sigma_3 \sigma_2^{-1}}$ whose image under the canonical map $\cB_4 \to S_4$ is the subgroup $\sgen{(12)(34), (13)(24)}$ of $S_4$. Since no $i \in \{1,2,3,4\}$ is a common fixed point of these permutations, $T$ (and hence none of the $G_n$) lie in any $M_{4, i}$, so $\ul{G}$ doesn't satisfy $(*_4)$. \par

    We want to understand the groups $\Hom(G_n, \mathbb{Z})$, which comes down to understanding how $u_n$ and $v_n$ act on $a$ and $b$. Since $T$ is a normal subgroup, conjugation by an element of $J$ descends to an action on the abelianization $\Ab(T) \cong \bZ^2$. By using the images of $a$ and $b$ as a basis for $\Ab(T)$, we get a map $\alpha: J \to \Aut(\Ab(T)) \cong \GL_2(\bZ)$ where $\alpha(x) \in \Aut(\Ab(T))$ is the automorphism defined by
    \[\alpha(x)(y T') = xyx^{-1} T'.\]
    Using the relations for $\cB_4'$ we stated above we find
    \[uau^{-1} = b \mod T'\;\;\;\;\;\; \text{ and } \;\;\; \quad ubu^{-1} = a^{-1}b^3 \mod T'\]
    so the matrix $U := \alpha(u)$ in the $a,b$ basis is given by
    \[\begin{bmatrix}0 & -1 \\ 1 & 3\end{bmatrix}.\]
    Similarly for conjugation by $v$, 
    \[vav^{-1} = a^{-1}b \mod T' \;\;\;\;\;\; \text{ and } \;\;\; \quad vbv^{-1} = a^{-5}b^4 \mod T'\]
    so the matrix $V := \alpha(v)$ is given by
    \[\begin{bmatrix}-1 & -5 \\ 1 & 4\end{bmatrix}.\]
    Note this computation shows the image of $\alpha$ lies in $\SL_2(\bZ)$. We claim that $U$ and $V$ freely generate a subgroup of $\SL_2(\bZ)$, and we will use this to prove that $\varinjlim \Hom(\ul{G}, \bZ) = 0$. Note that if $U_0 = U$ and $V_0 = V$ freely generate a subgroup, then by induction so do
    \begin{align*}
    U_n & = \alpha(u_n) = U_{n - 1}V_{n - 1}U_{n - 1}^{-1}V_{n - 1}^{-1} \quad \text{ and }\\
    V_n & = \alpha(v_n) = U_{n - 1}V_{n - 1}^2U_{n - 1}^{-1}V_{n - 1}^{-2}.
    \end{align*}
    Before proving the claim, let us show that all of the structure maps in $\Hom(\ul{G}, \bZ)$ are zero (and hence the limit is zero). By definition the subgroup $G_n$ is generated by $T$ and the elements $u_n$ and $v_n$. Therefore to understand a structure map
    \[\begin{tikzcd}
	{\Hom(G_{n - 1}, \bZ)} & {\Hom(G_n, \bZ)}
	\arrow[from=1-1, to=1-2]
    \end{tikzcd}\]
    induced by the inclusion of subgroups, we need to take a morphism $f: G_{n - 1} \to \bZ$ and compute its value on $T$ and $u_n$ and $v_n$. We see that $f$ automatically vanishes on $u_n$ and $v_n$ as they are commutators of elements in $G_{n - 1}$. Since $f$ maps into the abelian $\bZ$, its restriction to $T$ factors through $\Ab(T) \cong \bZ^2$, and to show it is the zero map it is enough to show that $\tilde{f}: \Ab(T) \cong \bZ^2  \to \bZ$ has a rank $2$ subgroup in its kernel. Given any $x \in J_n$ and $y \in T$ we have
    \[\tilde{f}(y T') = \tilde{f}(x  T') + \tilde{f}(y  T') - \tilde{f}(x  T') = \tilde{f}(xyx^{-1}  T') = \tilde{f}(\alpha(x)(y  T'))\]
    showing that $\tilde{f}$ is zero on any elements $(\alpha(x) - \id)y$. In particular the images of both $U_n - \id$ and $V_n - \id$ are in the kernel of $\tilde{f}$. Then by the claim we know $U_n$ and $V_n$ freely generate a subgroup of $\SL_2(\bZ)$, so by Lemma \ref{lem:action of free subgroups of sl2Z} the sum of these images is a rank $2$ subgroup of $\bZ^2$. \par

    Finally we need to show that
    \[U = \begin{bmatrix}0 & -1 \\ 1 & 3\end{bmatrix} \quad \text{ and } \quad V = \begin{bmatrix}-1 & -5 \\ 1 & 4\end{bmatrix}\]
    freely generate a subgroup of $\SL_2(\bZ)$, which by Lemma \ref{lem:action of free subgroups of sl2Z} is the same as showing that their images in $\PSL_2(\bZ)$ freely generate a subgroup. It is a standard result that $\PSL_2(\bZ)$ has the following presentation (see \cite{PSL2Z}):
    \[\PSL_2(\bZ) = \sgen{S, Q \ | \ S^2 = 1 , \quad Q^3 = 1} \cong \bZ/2\bZ * \bZ/3\bZ\]
    where the matrices $S$ and $Q$ are given by
    \[S = \begin{bmatrix}0 & -1 \\ 1 & 0\end{bmatrix} \quad Q = \begin{bmatrix}1 & -1 \\ 1 & 0\end{bmatrix}\]
    and our matrices are $U = S(QS)^3$ and $V = SQ^{-1}SQU$. We shall compute the subgroup $\sgen{U, V}$ by using covering space theory, which will along the way show that $\sgen{U, V}$ is the commutator subgroup of $\PSL_2(\bZ)$.
    
    \begin{center}
        \begin{figure}[t]
            \includegraphics[width=200pt]{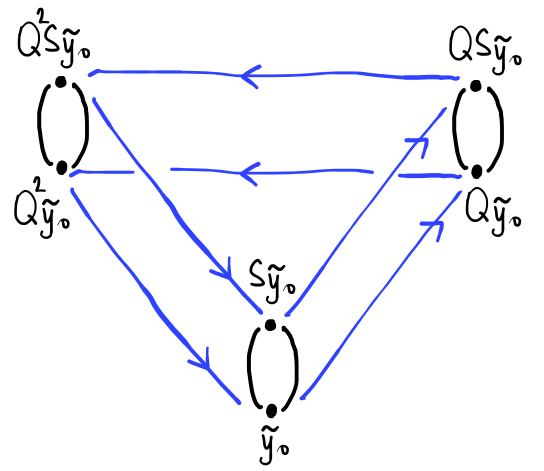}
            \centering
            \caption{Illustrated is the $1$-skeleton for $\wt{Y}$ along with labels for its six $0$-cells. The $1$-cells associated to $S$ are drawn in black, while the $1$-cells associated to $Q$ are drawn in blue.}
            \label{fig:Y_tilde}
        \end{figure}
    \end{center}

    To start note that when abelianizing $\PSL_2(\bZ)$ the element $U$ is congruent to $S^4Q^3 = 1$ and $V$ is congruent to $S^2U = 1$, so $\sgen{U, V}$ is a subgroup of the commutator subgroup of $\PSL_2(\bZ)$.
    We can form a CW complex $Y$ with a basepoint $y_0$, two $1$-cells which we label $S$ and $Q$, and two $2$-cells implementing the relations $S^2 = 1$ and $Q^3 = 1$. Thus $\pi_1(Y, y_0) \cong \PSL_2(\bZ)$ via the same presentation as above, and we can consider the covering space $(\wt{Y}, \wt{y_0})$ corresponding to the commutator subgroup of $\pi_1(Y, y_0)$. \par
    The covering map $(\wt{Y}, \wt{y_0})$ to $(Y, y_0)$ is a $6$-fold covering as the abelianization of $\PSL_2(\bZ)$ is $\bZ/2\bZ \oplus \bZ/3\bZ$, an order $6$ group. Giving $(\wt{Y}, \wt{y_0})$ the CW structure induced by the covering map, the $1$-skeleton of $(\wt{Y}, \wt{y_0})$ is as in Figure \ref{fig:Y_tilde}, as the Deck transformations for this space are $\bZ/2\bZ \oplus \bZ/3\bZ$. Since $\sgen{U, V}$ is a subgroup of the commutator subgroup, the two elements $U$ and $V$ still represent loops in $\wt{Y}$ based at $\wt{y_0}$. Collapsing the six $2$-cells associated to $Q^3 = 1$ to two points (one point by identifying $\wt{y_0}$, $Q\wt{y_0}$, $Q^2\wt{y_0}$ and the other by identifying $S\wt{y_0}$, $QS\wt{y_0}$, $Q^2S\wt{y_0}$) and collapsing the six $2$-cells associated to $S^2 = 1$ to three line segments (one between $\wt{y_0}$ and $S\wt{y_0}$, another between $Q\wt{y_0}$ and $SQ\wt{y_0}$, and the final between $Q^2\wt{y_0}$ and $SQ^2\wt{y_0}$) we see that $(\wt{Y}, \wt{y_0})$ is homotopy equivalent to the theta space and hence has $\pi_1(\wt{Y}, \wt{y_0}) \cong \bZ * \bZ$. Moreover, following the loops $U$ and $V$ along this homotopy equivalence, we see that they end up as generators for the fundamental group of the theta space (see Figure \ref{fig:U_loop}), and hence $\sgen{U, V} = \pi_1(\wt{Y}, \wt{y_0}) \cong \bZ * \bZ$ with $U$ and $V$ as generators as required.
\end{proof}

\begin{center}
        \begin{figure}[t]
            \includegraphics[width=200pt]{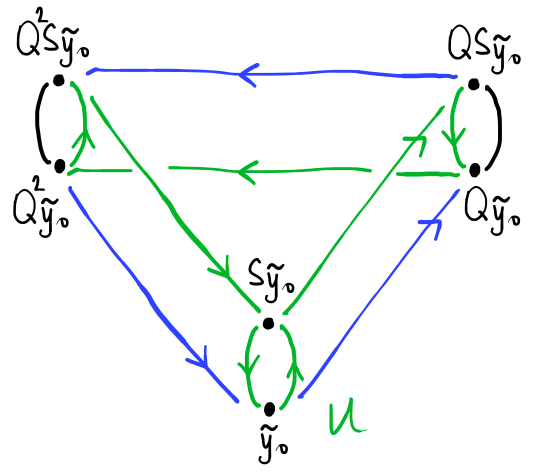}
            \includegraphics[width=200pt]{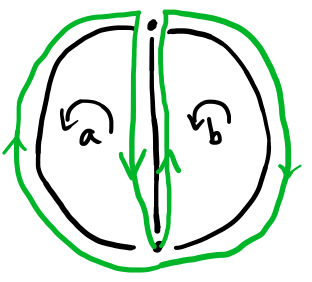}
            \centering
            \caption{On the left we illustrate the loop on $\wt{Y}$ corresponding to $U$ that starts at $\wt{y_0}$. On the right we depict this loop once we collapse $\wt{Y}$ to the theta space. If we use $a$ and $b$ to denote the loops on the theta space by traversing the respective semi-circles counterclockwise, we see that the loop corresponding to $U$ is $ab^{-1}$. A similar calculation shows that $V$ corresponds to $bab^{-1}$, and these two loops are freely generate the fundamental group.}
            \label{fig:U_loop}
        \end{figure}
    \end{center}

\begin{example}\label{ex:acyclic-2d-continua-does-not-solve-4deg-polys}
    Applying Lemma \ref{lem:compactumWithGivenProgroup}, we can find a 2-dimensional continuum $X$ for which the pro-group $\ul{G}$ in Lemma \ref{lem:cech cohomology counterexample for degree = 4} is realized as $\ppi_1(X,x_0)$. Since $\ppi_1(X,x_0) \cong \ul{G}$ does not satisfy $(*_4)$, it follows that there is a degree 4 polynomial over $C(X)$ with no repeated roots that does not possess an exact root. Moreover, since $\varinjlim \Hom(\ul{G}, \bZ) = 0$, it follows by the UCT that
    \[\check{H}^1(X) = \varinjlim \ul{H}^1(X) \cong \varinjlim \Hom(\ppi_1(X, x_0), \bZ) = 0\]
    so $X$ is acyclic. 
\end{example}

\bibliographystyle{amsalpha}
\bibliography{main}
\end{document}